\newtheorem{thm}{Theorem}[section]
\newtheorem{cor}[thm]{Corollary}
\newtheorem{lem}[thm]{Lemma}
\newtheorem{prop}[thm]{Proposition}
\newenvironment{customthm}[1]
  {\innercustomthm}
  {\endinnercustomthm}
\theoremstyle{definition}
\newtheorem{rem}[thm]{Remark}
\numberwithin{equation}{section}
\newcommand{\ep}{\epsilon}
\newcommand{\lx}{{\lambda, \xi}}
\newcommand{\pa}{\partial}
\newcommand{\wtu}{\widetilde{U}}
\newcommand{\drn}{D^{1,2}(\mr_+^{N+1}; t^{1-2s})}
\newcommand{\hc}{H^{1,2}_0(\mc; t^{1-2s})}
\newcommand{\mc}{\mathcal{C}}
\newcommand{\mh}{\mathcal{H}}
\newcommand{\mt}{\mathcal{T}}
\newcommand{\mv}{\mathcal{V}}
\newcommand{\mn}{\mathbb{N}}
\newcommand{\mr}{\mathbb{R}}
\renewcommand{\(}{\left(}
\renewcommand{\)}{\right)}
\begin{document}
\title[{Classification of solutions to the fractional Lane-Emden-Fowler equations}]{Classification of finite energy solutions\\ to the fractional Lane-Emden-Fowler equations\\ with slightly subcritical exponents}

\author{Woocheol Choi}
\address[Woocheol Choi]{School of Mathematics, Korea Institute for Advanced Study, Seoul 130-722, Republic of Korea}
\email{wchoi@kias.re.kr}

\author{Seunghyeok Kim}
\address[Seunghyeok Kim]{Facultad de Matem\'{a}ticas, Pontificia Universidad Cat\'{o}lica de Chile, Avenida Vicu\~{n}a Mackenna 4860, Santiago, Chile}
\email{shkim0401@gmail.com}

\begin{abstract}
We study qualitative properties of solutions to the fractional Lane-Emden-Fowler equations with slightly subcritical exponents
where the associated fractional Laplacian is defined in terms of either the spectra of the Dirichlet Laplacian or the integral representation.
As a consequence, we classify the asymptotic behavior of all finite energy solutions.
Our method also provides a simple and unified approach to deal with the classical (local) Lane-Emden-Fowler equation for any dimension greater than 2.
\end{abstract}

\subjclass[2010]{Primary: 35R11, Secondary: 35B33, 35B44, 35B45, 35J08}
\keywords{Fractional Laplacian, Critical nonlinearity, Multi-peak solutions, Blow-up analysis}
\thanks{\textit{Acknowledgement.} W. Choi is grateful to the financial support from POSCO TJ Park Foundation.
S. Kim is supported by FONDECYT Grant 3140530.}

\maketitle
\setcounter{tocdepth}{1}
\tableofcontents

\section{Introduction}
Suppose that $s \in (0,1)$, $N > 2s$, $p = \frac{N+2s}{N-2s}$ and $\Omega$ is a smooth bounded domain.
In this paper we are concerned with the asymptotic behavior of solutions to the nonlinear nonlocal elliptic problem
\begin{equation}\label{eq-main}
\begin{cases}
(-\Delta)^s u = u^{p-\ep} &\text{in } \Omega,\\
u > 0 &\text{in } \Omega,\\
u = 0 &\text{on } \Sigma = \pa \Omega \text{ or } \mr^N \setminus \Omega,
\end{cases}
\end{equation}
when a small parameter $\ep >0$ tends to zero.
Here $(-\Delta)^s$ is understood as the spectral fractional Laplacian or the restricted fractional Laplacian according to the choice of the boundary $\Sigma = \pa \Omega$ or $\mr^N \setminus \Omega$, respectively
(see Subsection \ref{subsec-def-frac} for the definition of the fractional Laplacians).

\medskip
Recently various nonlocal differential equations have attracted lots of researchers.
Especially, equations involving the fractional Laplacian were treated extensively in both pure and applied mathematics,
because not only the fractional Laplacian is an operator which naturally interpolates the classical Laplacian $-\Delta$ and the identity $(-\Delta)^0 = id$,
but also it appears in diverse areas including physics, biological modeling and mathematical finances, as a tool describing nonlocal characteristic.

Owing to technical difficulties arising from the nonlocality, there had not been enough progress in theory of equations involving the fractional Laplacian.
However, about a decade ago, Caffarelli and Silvestre \cite{CS} interpreted the fractional Laplacian in $\mr^N$ in terms of a Dirichlet-Neumann type operator in the extended domain $\mr^{N+1}_{+} = \{(x,t) \in \mr^{N+1}: t > 0\}$,
and this idea allowed one to analyze nonlocal problems by utilizing well-known arguments such as the mountain pass theorem, the moving plane method, the Moser iteration, monotonicity formulae, etc.
A similar extension was also devised by Cabr\'e-Tan \cite{CT}, and Stinga-Torrea \cite{ST}
(see Capella-D\'{a}vila-Dupaigne-Sire \cite{CDDS}, Br\"andle-Colorado-de Pablo-S\'anchez \cite{BCPS1}, Tan \cite{T2} and Chang-Gonz\'alez \cite{CG} also)
for nonlocal elliptic equations on bounded domains with zero Dirichlet boundary condition.

Based on these extensions (or the integral representation of a differential operator itself), a lot of studies on nonlocal problems of the form $(-\Delta)^s u = f(u)$ (for a certain function $f: \mr \to \mr$) were conducted.
For the results of particular equations, we refer to papers on the Schr\"odinger equations \cite{FQT, DDW, CZ, AWY}, the Allen-Cahn equations \cite{CS2, CS22},
the Fisher-KPP equations \cite{BCRR, CR}, the Nirenberg problem \cite{AC, JLX, JLX2}, and the Yamabe problem \cite{GQ, GW, CK, KMW}, respectively.
Also, Brezis-Nirenberg type problems have been tackled in \cite{T, BCPS2, ChS}.
Most results mentioned here considered on the existence of solutions with some desired property.
Meanwhile, several regularity results such as the Schauder estimate and the strong maximum principle were derived in \cite{CT, ST, CDDS, CS2, JLX, CS3} and references therein.

\medskip
Due to its simple form, the Lane-Emden-Fowler problem \eqref{eq-main} has been regarded as one of the most fundamental nonlinear elliptic equations.
It is now a classical fact that the exponent $p = \frac{N+2s}{N-2s}$ is a threshold on the existence of a solution to \eqref{eq-main}.
If $\ep > 0$, one can find a solution to \eqref{eq-main} by applying the standard variational argument with the compact embedding $H^s(\Omega) \hookrightarrow L^{p+1-\ep}(\Omega)$.
If $\ep \le 0$ and $\Omega$ is star-shaped, the Pohozaev identity (obtained in \cite{CT, T2} for the spectral Laplacians and in \cite{RS3} for the restricted Laplacians) implies that no solution exists.
In view of the corresponding result of Bahri-Coron \cite{BC} to the case $s = 1$,
it is expected that \eqref{eq-main} has a solution if the domain $\Omega$ has nontrivial topology.

On the other hand, it is well-known that the Brezis-Nirenberg type problem
\begin{equation}\label{eq-BN}
\begin{cases}
(-\Delta)^s u = u^p + \ep u^q &\text{in } \Omega,\\
u > 0 &\text{in } \Omega,\\
u = 0 &\text{on } \Sigma = \pa \Omega \text{ or } \mr^N \setminus \Omega,
\end{cases}
\end{equation}
where $N > 2s$, $0 < q < p$ and $\ep > 0$ is a parameter, shares many common characteristics with \eqref{eq-main}.
Through the papers \cite{T, BCPS2, SV2, BCSS}, it was determined that its solvability relies on $\ep,\,p,\, q,\, N$ and $\Omega$.

\medskip
Once the existence theory is settled, the very next step would be to obtain information on the shape of solutions.

For equation \eqref{eq-main} with general exponents on the nonlinearity, an answer of this question is provided by the moving plane argument.
It yields that for any $p-\ep > 1$ each solution to \eqref{eq-main} increases along lines emanating from a boundary point to a certain interior point.
It then induces symmetry of a solution from that of the domain $\Omega$. We refer to \cite{CT, RS, T2} for further discussion.

On the other hand, it is natural to guess that if $\ep \to 0$, then the solution $u_{\ep}$ may possess a singular behavior, since $p = {N+2s \over N-2s}$ is the critical exponent.
This idea intrigues one to investigate the shape of $u_{\ep}$ in detail for $\ep > 0$ small enough.
In this regards, Choi-Kim-Lee \cite{CKL} and D\'avila-L\'opez-Sire \cite{DLS} constructed multiple blow-up solutions
by applying the Lyapunov-Schmidt reduction method (refer to Theorem \ref{thm-m-multipeak2} below).
When the fractional Laplacian is defined in terms of the spectra of the Dirichlet Laplacian,
the authors of \cite{CKL} also characterized the asymptotic behavior of a sequence $\{u_{\ep}\}_{\ep >0}$ of \textit{minimal energy} solutions to \eqref{eq-main} and \eqref{eq-BN} (with $q = 1$).
It turned out that $u_{\ep}$ blows up at a single point which is a critical point of the Robin function of $(-\Delta)^s$.

In this line of research, an important remaining problem is to study the asymptotic character of solutions $\{u_{\ep}\}_{\ep >0}$ without the minimal energy condition.
This is what we address in the current paper.
Precisely, we shall give a detailed description for the asymptotic behavior of all \textit{finite energy} solutions to \eqref{eq-main} where the fractional Laplacian is either spectral or restricted one.
We believe that the same phenomena should happen to finite energy solutions to \eqref{eq-BN}.

\begin{thm}\label{thm-ch}
For any given $s \in (0,1)$ and $N > 2s$, suppose that there exists a sequence $\{u_n\}_{n \in \mn}$ in $\mh$ such that each of the function $u_n$ solves equation \eqref{eq-main} with $\ep = \ep_n \searrow 0$.
In addition, assume $\sup_{n \in \mn} \|u_n\|_\mh < + \infty$. Then one of the following holds: Up to a subsequence, either

\noindent \textnormal{(1)} the function $u_n$ converges strongly in $\mh$ to a function $v$ satisfying
\begin{equation}\label{eq-l}
\begin{cases}
(-\Delta)^s v = v^p &\text{in } \Omega,\\
v > 0 &\text{in } \Omega,\\
v = 0 &\text{on } \Sigma = \pa \Omega \ \text{ or } \mr^N \setminus \Omega
\end{cases}
\end{equation}
as $n \to \infty$, or

\noindent \textnormal{(2)} the asymptotic behavior of $u_n$ is given by \begin{equation}\label{eq-asym}
u_n = \sum_{i=1}^m Pw_{\lambda_n^i, x_n^i} + r_n
\end{equation}
where $\lambda_n^i \to 0$ and $x_n^i \to x_0^i \in \Omega$ as $n \to \infty$.
Here $Pw_\lx$ is the projected bubble defined after \eqref{eq-pw} and $r_n$ is a remainder term converging to zero in $\mh$.
Furthermore the following properties are valid.

\noindent - There is a constant $C_0 > 0$ independent of $n \in \mn$ such that $\frac{\lambda_n^j}{\lambda_n^i} < C_0$ holds for all $n \in \mn$ and $i, j = 1, \cdots, m$.

\noindent - There is a constant $d_0 > 0$ such that $|x_n^i - x_n^j| > d_0$ for any $n \in \mn$ and $i,j = 1, \cdots, m$ with $i \neq j$.

\noindent - Let $b_i = \( \lim_{n \to \infty} \frac{\lambda_n^i}{\lambda_n^1}\)^{\frac{N-2s}{2}}$ and $b_0 = \lim_{n \to \infty} (\lambda_n^1)^{-(N-2s)} \ep_n$.
Then the value
\[((b_1, \cdots, b_m), (x_0^1, \cdots, x_0^m)) \subset (0,\infty)^m \times \Omega^m\]
is a critical point of the function $\Phi_m$ defined by
\begin{equation}\label{eq-phi}
\Phi_m (b_1,\cdots, b_m, x_1,\cdots, x_m)
= c_1 \( \sum_{i=1}^m b_i^2 H(x_i, x_i) - \sum_{i \neq k} b_i b_k G(x_i, x_k)\) - c_2 \log (b_1 \cdots b_m) \cdot b_0,
\end{equation}
where
\begin{equation}\label{eq-c12}
c_1 = \int_{\mr^N} w_{1,0}^p dx > 0 \quad \text{and} \quad c_2 = \({N-2s \over N}\) {\int_{\mr^N} w_{1,0}^{p+1} dx \over \int_{\mr^N} w_{1,0}^p dx} > 0.
\end{equation}
Here $G: \Omega \times \Omega \to \mr$ is Green's function of $(-\Delta)^s$,
$H: \Omega \times \Omega \to \mr$ is its regular part, and $w_{1,0}$ is the standard bubble on $\mr^N$ given in \eqref{eq-bubble}.
(See Section 2 for more details.)
\end{thm}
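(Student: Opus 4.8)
The plan is to run a standard blow-up analysis adapted to the extension framework, organized around the energy functional on $\mh$ whose critical points are solutions of \eqref{eq-main}. First I would pass to the Caffarelli--Silvestre (or Cabr\'e--Tan/Stinga--Torrea) extension, replacing $u_n$ by its extension $U_n \in \hc$, so that the equation becomes a local Neumann-type problem on the cylinder $\mc$ with a boundary nonlinearity $U_n^{p-\ep_n}$. Since $\sup_n \|u_n\|_\mh < \infty$, the extensions are bounded in $\hc$, and up to a subsequence $U_n \rightharpoonup V$ weakly, with $V$ the extension of a solution $v$ of \eqref{eq-l} (possibly $v \equiv 0$). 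If the convergence is strong, alternative (1) holds, so assume it is not. The Brezis--Lieb lemma together with the subcritical-to-critical passage $p - \ep_n \to p$ forces the defect of compactness to be carried by finitely many bubbles: this is the fractional analogue of the Struwe decomposition, and it yields \eqref{eq-asym} with $m \ge 1$ profiles $Pw_{\lambda_n^i, x_n^i}$, concentration parameters $\lambda_n^i \to 0$, and centers $x_n^i \to x_0^i$, with $r_n \to 0$ in $\mh$. The interior location $x_0^i \in \Omega$ (as opposed to the boundary) follows from the Pohozaev-type balancing near $\Sigma$, using that $u_n = 0$ on $\Sigma$ and the Hopf-type boundary estimates available for both fractional operators.

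The next block establishes the quantitative relations among the parameters. The separation of centers, $|x_n^i - x_n^j| > d_0$, is immediate from the bubble-tree structure: two profiles with centers approaching each other at a rate comparable to their concentration scales would merge into a single profile, contradicting minimality of $m$ in the decomposition; profiles at genuinely different scales but the same center are excluded by the same argument once one tracks the interaction integrals. The comparability of scales, $\lambda_n^j/\lambda_n^i < C_0$, is the more delicate point and is where I expect the main obstacle to lie. One proves it by testing the equation against suitable combinations of $\pa_{\lambda} Pw_{\lambda_n^i, x_n^i}$: the resulting identity equates a diagonal term of order $(\lambda_n^i)^{N-2s}$ (coming from the Robin function $H$) and a term of order $\ep_n \log(1/\lambda_n^i)$ (coming from differentiating the subcritical nonlinearity) against off-diagonal interaction terms of order $(\lambda_n^i \lambda_n^j)^{(N-2s)/2} G(x_n^i, x_n^j)$. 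Since all these quantities must balance and the interaction terms are symmetric in $i,j$, no single scale can run away from the others; a compactness/ordering argument on the ratios $\lambda_n^i/\lambda_n^1$ then gives the uniform bound $C_0$. Getting the exponents and constants exactly right here — in particular isolating the logarithmic term with the correct coefficient $c_2$ — is the computational heart of the proof and must be done uniformly over the (finite) index set.

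Finally I would derive the critical point equation for $\Phi_m$. Having normalized by $\lambda_n^1$, one has well-defined limits $b_i = (\lim \lambda_n^i/\lambda_n^1)^{(N-2s)/2} \in (0,\infty)$ (finite and positive by the scale comparability just proved) and $b_0 = \lim (\lambda_n^1)^{-(N-2s)} \ep_n$ (finite after possibly passing to a further subsequence; if $b_0 = 0$ one is in a degenerate sub-case still covered by the formula). Expanding the energy $E_{\ep_n}(u_n)$ to second order in the bubble interactions — using the known expansions $\int w_{1,0}^p$, $\int w_{1,0}^{p+1}$ for the self-energy, $H(x_i,x_i)$ for the self-interaction with the boundary, $G(x_i,x_k)$ for mutual interactions, and $\ep_n \log \lambda_n$ for the nonlinearity perturbation — gives $E_{\ep_n}(u_n) = m\, c_* + (\lambda_n^1)^{N-2s}\big[\Phi_m(b_1,\dots,b_m,x_0^1,\dots,x_0^m) + o(1)\big]$ for an explicit constant $c_*$ and the $c_1, c_2$ of \eqref{eq-c12}. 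Since $u_n$ is an exact solution and $r_n \to 0$, the orthogonality conditions defining $r_n$ in the Lyapunov--Schmidt setup force the derivatives of this reduced energy with respect to $(b_i, x_i)$ to vanish in the limit; passing to the limit in these $m(N+1)$ equations yields $\nabla \Phi_m\big((b_1,\dots,b_m),(x_0^1,\dots,x_0^m)\big) = 0$, which is the assertion. The only structural care needed is that the two fractional operators share the same expansions of $G$ and $H$ near the diagonal (up to the respective regular parts), so the argument is genuinely unified, exactly as claimed in the abstract.
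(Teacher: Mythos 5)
Your Step 1 (Struwe-type decomposition) matches the paper, but from that point on the proposal contains several genuine gaps, and the route you sketch for the quantitative statements is not the one the paper uses and would not close as written.

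First, the claim that the separation $|x_0^i-x_0^j|\ge d_0$ is ``immediate from the bubble-tree structure'' is false. The concentration-compactness decomposition only yields \eqref{eq-cc-cond}: for $i\ne j$ either the scales diverge from one another or the rescaled distance $|x_n^i-x_n^j|^2/(\lambda_n^i\lambda_n^j)$ blows up. This is perfectly compatible with towering bubbles at a single point or with centers converging to the same limit, and there is no ``minimality of $m$'' in the decomposition that excludes these. Excluding them is the content of Lemma \ref{lem-peak-split}, whose proof requires the sharp pointwise upper bound $U_n\le C(\lambda_n^1)^{-(N-2s)/2}W((z-(x_n^1,0))/\lambda_n^1)$ obtained from the moving sphere argument (Proposition \ref{prop-moving}, Corollary \ref{cor-ms}) together with a matching lower bound from Green's representation. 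The same pointwise bounds are what give $V_0\equiv 0$ when $m\ge 1$ (Lemma \ref{lem-alt}) --- a step you silently skip when you pass from ``$U_n\rightharpoonup V$, possibly nonzero'' to the decomposition \eqref{eq-asym} with no $v_0$ term --- and what give the scale comparability $\lambda_n^i/\lambda_n^j\le C_0$ (Lemma \ref{lem-com}), again by comparing upper and lower pointwise bounds on an annulus, not by testing against $\pa_\lambda PW_{\lambda_n^i,x_n^i}$.

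Second, your derivation of the critical-point equation for $\Phi_m$ rests on ``the orthogonality conditions defining $r_n$ in the Lyapunov--Schmidt setup.'' But the remainder produced by the concentration-compactness principle satisfies no orthogonality conditions; to impose them one must re-parametrize the decomposition (replacing $\sum_i PW_{\lambda_n^i,x_n^i}$ by $\sum_i\alpha_n^i PW_{\lambda_n^i,x_n^i}$ and perturbing all parameters, as in Bahri--Coron) and then prove a quantitative estimate of $\|r_n\|$ in powers of $\ep_n$ and $\lambda_n^1$. With only $\|r_n\|_{\mh}=o(1)$, the energy expansion you invoke cannot be justified to the relevant order $(\lambda_n^1)^{N-2s}\sim\ep_n$, since the error contributed by $r_n$ is a priori much larger. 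This is exactly the difficulty described in Remark \ref{rem-intro}(1), and the paper's whole point is to bypass it: instead of expanding the energy, it uses the local Green-type identity \eqref{eq-uv} on small half-balls $B^{N+1}_+((x_0^i,0),r)$ with the test functions $V=\pa U_n/\pa x_j$ and $V=(z-x_0^i)\cdot\nabla U_n+\tfrac{2s}{p-1-\ep_n}U_n$, together with the $C^2$-convergence of $(\lambda_n^1)^{-(N-2s)/2}U_n$ to $c_1\sum_i b_i G_{\mc}(\cdot,x_0^i)$ (Lemma \ref{lem-u-asym}), to obtain \eqref{eq-con} and \eqref{eq-con2} directly. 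Note also that the finiteness of $b_0$ is a consequence of \eqref{eq-con2} rather than something one can arrange by passing to a subsequence, as you assert. To repair the proposal you would need either to import the full Bahri--Li--Rey machinery (including the sharp remainder estimate) or to supply the moving-sphere pointwise bounds and the local Pohozaev identities that the paper actually uses.
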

\begin{rem}
As we mentioned, equation \eqref{eq-main} may have a solution even for $\ep \le 0$ if the topology of the domain $\Omega$ is not simple (say, its homology group over $\mathbb{Z}/(2\mathbb{Z})$ is non-trivial).
Hence the first case (1) of Theorem \ref{thm-ch} cannot be excluded for general domains.
\end{rem}

If the blow-up points satisfy a certain non-degeneracy condition, then we can determine the blow-up rates in terms of an explicit power of $\ep^{-1}$ as the following theorem shows.

\begin{thm}\label{thm-main2}
Let $\{u_n\}_{n \in \mn}$ be a sequence of solutions to \eqref{eq-main} satisfying \textnormal{(2)} of Theorem \ref{thm-ch}.
Let us set an $m \times m$ symmetric matrix $M = (m_{ij})_{1 \leq i, j \leq m}$ by
\[m_{ij} = \begin{cases}
H(x_0^i, x_0^i)  &\text{if}~ i =j,\\
-G(x_0^i, x_0^j) &\text{if}~ i \neq j.
\end{cases}\]
Then it is nonnegative definite. If it is nondegenerate (i.e. positive definite), then for any $1 \leq i \leq m$, we have
\begin{equation}\label{eq-conc-rate}
\lim_{n \to \infty} \log_{\ep_n} \lambda_n^i = \frac{1}{N-2s}.
\end{equation}
\end{thm}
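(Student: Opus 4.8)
The starting point is to exploit the information already provided by case (2) of Theorem \ref{thm-ch}: we know $u_n = \sum_{i=1}^m Pw_{\lambda_n^i,x_n^i} + r_n$ with $r_n \to 0$ in $\mh$, and that the limiting parameters $((b_1,\dots,b_m),(x_0^1,\dots,x_0^m))$ form a critical point of $\Phi_m$ in \eqref{eq-phi}. The first step is to write out the critical point equations $\partial_{b_i}\Phi_m = 0$ explicitly. Differentiating in $b_i$ gives
\[
2c_1\Bigl(b_i H(x_0^i,x_0^i) - \sum_{k \neq i} b_k G(x_0^i,x_0^k)\Bigr) - \frac{c_2 b_0}{b_i} = 0,
\]
i.e. $b_i \sum_j m_{ij} b_j = \dfrac{c_2 b_0}{2c_1}$ for every $i$. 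In matrix form, if $\mathbf b = (b_1,\dots,b_m)^T$ and $D_{\mathbf b} = \mathrm{diag}(b_i)$, this reads $D_{\mathbf b} M \mathbf b = \dfrac{c_2 b_0}{2c_1}\mathbf 1$. Summing over $i$ yields $\mathbf b^T M \mathbf b = \dfrac{mc_2 b_0}{2c_1}$, which is strictly positive; hence, since all $b_i>0$, we must have $b_0 > 0$. This is the crucial qualitative consequence: the blow-up rate parameter $b_0 = \lim_n (\lambda_n^1)^{-(N-2s)}\ep_n$ is a finite \emph{positive} number, so $\lambda_n^1 \sim (b_0^{-1}\ep_n)^{1/(N-2s)}$, giving $\log_{\ep_n}\lambda_n^1 \to \frac{1}{N-2s}$; and since $b_i = \lim(\lambda_n^i/\lambda_n^1)^{(N-2s)/2} \in (0,\infty)$, the same holds for every index $i$.

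The content that must be established, therefore, is precisely that $0 < b_0 < +\infty$; equivalently, that $\ep_n$ and $(\lambda_n^1)^{N-2s}$ go to zero at comparable rates. The nonnegative definiteness of $M$ should follow from a standard argument: for any $\xi \in \mr^m$, $\sum_{i,j} m_{ij}\xi_i\xi_j$ equals the Dirichlet energy (in the extension, using the Caffarelli--Silvestre picture) of the harmonic function with singular data $\sum_i \xi_i(\delta_{x_0^i} \text{-type singularity})$ — more precisely one compares $G$ and $H$ through the fact that $H(x,y) = \frac{a_{N,s}}{|x-y|^{N-2s}} - G(x,y)$ and Green's representation, so that $M$ is the Gram-type matrix of the regular parts and is positive semidefinite, with degeneracy exactly when the points "interact singularly," which cannot happen for distinct interior points; under the theorem's hypothesis we simply assume $M$ is nondegenerate. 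Granting $M > 0$, the equation $D_{\mathbf b}M\mathbf b = \frac{c_2 b_0}{2c_1}\mathbf 1$ with $\mathbf b > 0$ forces $b_0 > 0$ (if $b_0 = 0$ then $D_{\mathbf b}M\mathbf b = 0$, so $\mathbf b^T M \mathbf b = 0$, contradicting $M > 0$ and $\mathbf b \neq 0$), and $b_0 < \infty$ is part of the definition (the limit is taken along a subsequence and is finite because $\Phi_m$ is finite at the critical point — if $b_0 = +\infty$ the $-c_2\log(b_1\cdots b_m)\cdot b_0$ term and the critical-point relations become incompatible with $\mathbf b \in (0,\infty)^m$).

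The main obstacle I anticipate is not in this final algebraic deduction but in justifying rigorously that the limits defining $b_i$ and $b_0$ exist, are finite, and that the reduced functional $\Phi_m$ indeed governs them — i.e. in the derivation of Theorem \ref{thm-ch}(2) itself, which here is taken as given. Conditional on that, the argument above is essentially a few lines of linear algebra: express the Pohozaev-type balancing relations as $D_{\mathbf b}M\mathbf b = \frac{c_2 b_0}{2c_1}\mathbf 1$, test against $\mathbf 1$, use positivity of $M$ and of the entries of $\mathbf b$ to conclude $b_0 \in (0,\infty)$, and read off \eqref{eq-conc-rate}. I would also remark that the nondegeneracy hypothesis on $M$ is used only to guarantee $b_0 > 0$ (and hence a clean rate); without it one could have $b_0 = 0$, in which case $\lambda_n^i$ would decay faster than any power of $\ep_n$, consistently with $M\mathbf b = 0$.
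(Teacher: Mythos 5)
Your proposal is correct and follows essentially the same route as the paper: the balancing relation $b_i^2 H(x_0^i,x_0^i)-\sum_{k\neq i}b_ib_kG(x_0^i,x_0^k)=\tfrac{c_2}{2c_1}b_0$ that you read off from $\partial_{b_i}\Phi_m=0$ is precisely the paper's identity \eqref{eq-con2} (Lemma \ref{lem-con2}), from which finiteness of $b_0$ follows, and the paper then argues exactly as you do that $b_0=0$ would give $M\mathbf{b}=0$ with $\mathbf{b}\neq 0$, contradicting nondegeneracy, whence $b_0\in(0,\infty)$ and \eqref{eq-conc-rate}. The paper likewise does not prove nonnegative definiteness of $M$ but defers it to Appendix A of \cite{BLR}, so your sketch there is on the same footing.
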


\medskip
Recall that equation \eqref{eq-main} has multi-bubble solutions as the following result indicates.
\begin{customthm}{A}[Choi-Kim-Lee \cite{CKL} and D\'avila-L\'opez-Sire \cite{DLS}]\label{thm-m-multipeak2}
Assume $s \in (0,1)$ and $N > 2s$.
Given arbitrary $m \in \mn$, suppose that the function $\Phi_m$ in \eqref{eq-phi} with $b_0 = (N-2s)/4s$ has a stable critical set $\Lambda_m$
such that
\[\Lambda_m \subset \left\{((\lambda_1, \cdots, \lambda_m), (x_1,\cdots,x_m)) \in (0,\infty)^m \times \Omega^m : x_i \ne x_j \text{ if } i \ne j \text{ and } i, j = 1, \cdots, m\right\}.\]
Then there exist a point $((\lambda_0^1, \cdots, \lambda_0^m),(x_0^1,\cdots,x_0^m)) \in \Lambda_m$ and a small number $\ep_0 > 0$ such that for $0 < \ep < \ep_0$,
there is a family of solutions $u_{\ep}$ of \eqref{eq-main} which concentrate at each point $x_0^1, \cdots, x_0^{m-1}$ and $x_0^m$ as $\ep \to 0$ in the form \eqref{eq-asym},
after extracting a subsequence if necessary.
\end{customthm}
\noindent The asymptotic behavior of solutions figured in Theorem \ref{thm-main2} (2) corresponds exactly to the multi-peak solutions described in the above theorem.
This reveals the accuracy and sharpness of our classification results.
The question of finding a blow-up sequence of solutions not satisfying \eqref{eq-conc-rate} is open even for the local case $s = 1$.

\medskip
Before introducing our strategy for the proof of the classification results, it is worth to remind that problem \eqref{eq-main} is a nonlocal version of the Lane-Emden-Fowler equation
\begin{equation}\label{eq-local}
\begin{cases}
-\Delta u = u^{\frac{N+2}{N-2}-\ep} &\text{in } \Omega, \\
u>0 &\text{in } \Omega, \\
u=0 &\text{on } \pa \Omega.
\end{cases}
\end{equation}
In \cite{R}, Rey constructed one-peak solutions to \eqref{eq-local}.
Then multi-peak solutions were found by Bahri-Li-Rey \cite{BLR}, Rey \cite{R3} and Musso-Pistoia \cite{MP} (for $N \ge 3$) by different ways.
Furthermore, the classification of solutions was conducted in Han \cite{H} and Rey \cite{R} for one-peak case ($N \ge 3$), and in Bahri-Li-Rey \cite{BLR} and Rey \cite{R3} for general case ($N \ge 4$ and $N = 3$, respectively).
\begin{customthm}{B}[Bahri-Li-Rey \cite{BLR} and Rey \cite{R3}] \label{thm-local}
Assume that $N \ge 3$ and $\{u_n\}_{n \in \mn} \subset H^1_0(\Omega)$ is a sequence of solutions to \eqref{eq-local} with $\ep = \ep_n \searrow 0$.
Also, suppose that $\sup_{n \in \mn} \|u_n\|_{H^1_0(\Omega)} < \infty$.

\noindent \textnormal{(1)} Passing to a subsequence, either $u_n$ strongly converges to a solution $u$ of \eqref{eq-l} with $s = 1$,
or it has the asymptotic behavior \eqref{eq-asym} where $Pw_\lx$ is the projected bubble defined as
\[-\Delta Pw_\lx = w_\lx^p \quad \text{in } \Omega \quad \text{and} \quad Pw_\lx = 0 \quad \text{on } \pa \Omega\]
($w_{\lx}$ is given in \eqref{eq-bubble}).
Moreover, all characteristics of the concentration points $\{x_n^1, \cdots, x_n^m\}$ and rates $\{\lambda_n^1, \cdots, \lambda_n^m\}$ in the statement of Theorem \ref{thm-ch} remain to hold.
If the nonnegative matrix $M$ defined in the statement of Theorem \ref{thm-main2} is in fact positive, then \eqref{eq-conc-rate} is valid.
\end{customthm}

\noindent In \cite{BLR, R3}, a certain decomposition of the space $H^1_0(\Omega)$ is crucially used (see Remark \ref{rem-intro} (1) below), which produces large error in the lowest dimension case $N = 3$.
In this reason, improved estimates had to be made additionally in \cite{R3}.
Remarkably, as we will see later, our proof for Theorems \ref{thm-ch} and \ref{thm-main2} provides a unified and neater approach to treat this local situation $s = 1$.
As a result, we have a new proof of Theorem \ref{thm-local} working for all dimensions $N \ge 3$ at the same time. See Subsection \ref{subsec-local}.

\medskip
The framework of the proofs for our main theorems comprises of the following three steps:

\noindent \textbf{Step 1.} Concentration-compactness principle;

\noindent \textbf{Step 2.} Pointwise bounds of $u_n$ obtained from a moving sphere argument and their applications;

\noindent \textbf{Step 3.} Two identities regarding Green's function and the Robin function coming from a type of Green's identity.

Let us briefly explain each step by assuming that the spectral fractional Laplacian is under consideration.

In \textbf{Step 1}, we recall the concentration-compactness principle for problem \eqref{eq-main}.
This renowned principle is found by Struwe \cite{S} for equation \eqref{eq-local}, 
and recently extended to problem \eqref{eq-main} by Almaraz \cite{A} for $s=\frac{1}{2}$, and by Fang-Gonz\'alez \cite{FG} and Palatucci-Pisante \cite{PP} for all $0<s<1$ (in slightly different setting).
It makes possible to decompose solutions $\{u_n\}_{n \in \mn}$ of \eqref{eq-main} as
\begin{equation}\label{eq-cc-local}
u_n = v_0 + \sum_{i=1}^m Pw_{\lambda_n^i, x_n^i} + r_n,
\end{equation}
where $v_0$ is the $\mh$-weak limit of $\{u_n\}_{n \in \mn}$, $Pw_{\lambda^i_n,x^i_n} \in \mh$ is the projected bubble and $r_n$ converges to zero in $\mh$.
See Lemma \ref{lem-cc-bounded} for the complete description of $\lambda_n^i$, $x_n^i$, $v_0$, $Pw_\lx$ and $r_n$.

Now our task is reduced to getting further information on the sequence $\{u_n\}_{n \in \mn}$ whose elements are expressed as \eqref{eq-cc-local},
which is one of the main contributions of this paper.
We immediately encounter a difficulty, because we do not know at this moment even whether two different concentration points $x_n^i$ and $x_n^j$ may collide or not.
This technicality will be tackled in \textbf{Step 2}, where we attain a pointwise bound of $u_n$ near each concentration point by employing the moving sphere method towards the extended problem \eqref{eq-ext} of equation \eqref{eq-main} (see Section \ref{sec-moving}).
This allows us to deduce no coincidence of two different blow-up points and to obtain further valuable information on solutions
such as the alternative between $v_0=0$ and $m=0$, and compatibility of blow-up rates of all peaks (see Section \ref{sec-point}).
This part is motivated by Schoen \cite{Sch}.

Given the pointwise bound and its consequences derived in \textbf{Step 2}, we show in \textbf{Step 3} that the $L^{\infty}$-normalized sequence of the solutions $u_n$ converges to a combination of Green's functions.
Then inserting this information into a Green-type identity \eqref{eq-uv} will lead us to discover two identities \eqref{eq-con} and \eqref{eq-con2}
regarding on the limit of the blow-up profile $(\lambda_n^1, \cdots, \lambda_n^m, x_n^1, \cdots x_n^m)$,
which will complete the proof of our main results.
On passing to the limit, one needs to know a uniform $C^2$-estimate of the $s$-harmonic extensions of $\{u_n\}_{n \in \mn}$.
It is not a trivial issue since we are handling the nonlocal problem \eqref{eq-main}, or the associate degenerate local problem \eqref{eq-ext} with the weighted Neumann boundary condition.
Appendix \ref{sec-app-b} is devoted to deduce the desired regularity results.

The above strategy extends Han's method \cite{H} in a quite natural manner, while the argument in Bahri-Li-Rey \cite{BLR} and Rey \cite{R3} can be regarded as further developments of Rey \cite{R, R2}.

\medskip
We conclude this section, presenting some additional remarks.
\begin{rem}\label{rem-intro}
(1) The corresponding result to \textbf{Step 3} for the local problem \eqref{eq-local} was achieved in Bahri-Li-Rey \cite{BLR} and Rey \cite{R3}.
The argument in \cite{BLR, R3} requires one to estimate $\|r_n\|_{H_0^1(\Omega)}$ in terms of powers of $\ep_n$ and $\max_{1\leq k \leq m} \lambda_n^k$.
For this aim, the authors replaced $\sum_{i=1}^m Pw_{\lambda_n^i, x_n^i}$ in the expansion \eqref{eq-cc-local} of $u_n$ with $\sum_{i=1}^m \alpha_n^i Pw_{\lambda_n^i, x_n^i}$ (for some $\alpha_n^i \in \mr$)
and then perturbed the parameters $(\alpha_n^i, \lambda_n^i, x_n^i)$ so that $r_n$ satisfies the $H^1_0(\Omega)$-orthogonality
\[\langle r_n, Pw_{\lambda_n^i, x_n^i} \rangle_{H^1_0(\Omega)} = \left\langle r_n, \frac{\pa Pw_{\lambda_n^i, x_n^i}}{\pa x_j} \right\rangle_{H^1_0(\Omega)}
= \left\langle r_n, \frac{\pa Pw_{\lambda_n^i, x_n^i}}{\pa \lambda} \right\rangle_{H^1_0(\Omega)} = 0
\quad \text{for } 1 \leq i \leq m, ~1 \leq j \leq N,\]
as in Bahri-Coron \cite{BC}.
After that, they followed the argument of Rey \cite{R, R2} to get a sharp estimate $\|r_n\|_{H_0^1 (\Omega)}$.
Their argument is simplified in our proof in the point that we do not need the estimate of the remainder term $r_n$.

\medskip \noindent (2) An advantage of the argument in \cite{BLR, R3} is that it deals with the energy functional of \eqref{eq-local} directly
so that it suggests a way to compute the Morse index of the solutions.
Recently, asymptotic behavior of the first $(N+2)m$-eigenvalues and eigenfunctions for the linearized equation of \eqref{eq-local} was examined in \cite{GP, CKL2}.
They give the information on the Morse index as a particular corollary.
\end{rem}

The rest of this paper is organized as follows.
In Section \ref{sec-pre}, we review the extension problem for the spectral and restricted fractional Laplacians, Green's function, the Robin function and the projected bubbles.
Moreover, we recall the concentration-compactness principle which brings with a decomposition result of blow-up solutions.
Section \ref{sec-moving} is devoted to the proof of a pointwise upper bound which makes use of a moving sphere argument.
In Section \ref{sec-point}, by using this estimate, we attain various refined information for the blow-up solutions, and in particular, show that suitably normalized blow-up solutions converge to combinations of Green's functions.
In Section \ref{sec-bi}, we obtain essential information of the blow-up points and their blow-up rates by using a Green-type identity, which proves our main results.
For the sake of brevity, we concentrate only on the spectral fractional Laplacian in Sections \ref{sec-moving}-\ref{sec-bi}.
Instead, all necessary modifications to deal with the restricted fractional Laplacian or the classical (local) Laplacian are listed in Section \ref{sec-res}.
Finally, a decay estimate of the standard bubble $W_{1,0}$ (see Subsection \ref{subsec_sobolev_trace}) needed in Section \ref{sec-moving}
and elliptic regularity results necessary for Lemma \ref{lem-u-asym} are derived in Appendices \ref{sec-W} and \ref{sec-app-b}, respectively.

\medskip
\noindent \textbf{Notations.}

\medskip \noindent - The letter $z$ represents a variable in the half-space $\mr^{N+1}_+ = \mr^N \times (0,\infty)$. Also, it is written as $z = (x,t) = (x_1, \cdots, x_N, x_{N+1})$ with $x = (x_1, \cdots, x_N) \in \mr^N$ and $t = x_{N+1} > 0$.

\medskip \noindent - For any fixed smooth bounded domain $\Omega \subset \mr^N$, let $\mc := \Omega \times (0, \infty) \subset \mr^{N+1}_+$ be the associated cylinder of $\Omega$ and $\pa_L \mc := \pa \Omega \times (0,\infty)$ its lateral boundary.
Set also $\mc' := \Omega \times [0, \infty)$.

\medskip \noindent - For fixed $N \in \mn$ and $s \in (0,1)$ such that $N > 2s$,
the weighted Sobolev space $\drn$ is defined as the completion of the space $C^{\infty}_c(\overline{\mr^{N+1}_+})$ with respect to the norm
\[\|U\|_{\drn} := \(\int_{\mr_+^{N+1}} t^{1-2s} |\nabla U(z)|^2 dz\)^{1/2} \quad \text{for } U \in C^{\infty}_c(\overline{\mr^{N+1}_+}).\]
Moreover, for any given cylinder $\mc = \Omega \times (0,\infty)$ (where $\Omega$ is a smooth bounded domain),
the space $\hc$ is the completion of $C^{\infty}_c(\mc \cup (\Omega \times \{0\}))$ with respect to the above norm.

\medskip \noindent - We will denote by $p$ the critical exponent $\frac{N+2s}{N-2s}$.

\medskip \noindent - Let $B^{N+1}_+((x,0),r)$ be the half-ball in $\mr^{N+1}_+$ of radius $r$ centered at $(x,0) \in \mr^N \times \{0\}$.
Moreover, we set $\pa_I B^{N+1}_+(0,r) = \pa B^{N+1}_+(0,r) \cap \mr^{N+1}$.
%and $\pa_B b B^{N+1}_+(0,r) = \pa B^{N+1}_+(0,r) \cap \mr^N$.

\medskip \noindent - $dS$ stands for the surface measure. Also, a subscript attached to $dS$ (such as $dS_x$ or $dS_z$) denotes the variable of the surface.

\medskip \noindent - For an arbitrary domain $D \subset \mr^n$, the map $\nu = (\nu_1, \cdots, \nu_n): \pa D \to \mr^n$ denotes the outward unit normal vector on $\pa D$.

\medskip \noindent - Suppose that $D$ is a domain and $T \subset \pa D$. If $f$ is a function on $D$, then the trace of $f$ on $T$ is denoted by $\text{tr}|_T f$ whenever it is well-defined.

\medskip \noindent - $|S^{N-1}| = 2\pi^{N/2}/\Gamma(N/2)$ denotes the Lebesgue measure of $(N-1)$-dimensional unit sphere $S^{N-1}$.

\medskip \noindent - The following positive constants will appear in \eqref{eq-frac}, \eqref{eq-CS}, \eqref{eq-Poi}, \eqref{eq-Green-R}, \eqref{eq-bubble} and \eqref{eq-Sobolev}:

\[c_{N,s} := {2^{2s} s\Gamma({N+2s \over 2}) \over \pi^{N \over 2} \Gamma(1-s)},
\quad \kappa_s := \frac{\Gamma(s)}{2^{1-2s}\Gamma (1-s)},
\quad p_{N,s} := {\Gamma\({N+2s \over 2}\) \over \pi^{N \over 2}\Gamma(s)},
\quad \gamma_{N,s} := \frac{1}{|S^{N-1}|}\cdot \frac{2^{1-2s} \Gamma\(\frac{N-2s}{2}\)}{\Gamma\(\frac{N}{2}\) \Gamma(s)},\]
\[\alpha_{N,s} := 2^{\frac{N-2s}{2}} \( \frac{\Gamma \( \frac{N+2s}{2}\)}{\Gamma \( \frac{N-2s}{2}\)}\)^{\frac{N-2s}{4s}}
\quad \text{and} \quad \mathcal{S}_{N,s} := 2^{-s}\pi^{-{s \over 2}} \(\frac{\Gamma\(\frac{N-2s}{2}\)}{\Gamma\(\frac{N+2s}{2}\)}\)^{\frac{1}{2}} \(\frac{\Gamma(N)}{\Gamma({N \over 2})}\)^{\frac{s}{N}}.\]

\medskip \noindent - $C > 0$ is a generic value that may vary from line to line.

%\noindent - Given a function $f = f(x)$, $\nabla_x f$ means the gradient of $f$ with respect to the variable $x$.

\section{Preliminaries on Fractional Laplacians} \label{sec-pre}
In this section we review some preliminary notions and results which will be needed throughout the proofs of the main theorems.

\subsection{Definition of Sobolev Spaces and Fractional Laplacians} \label{subsec-def-frac}
For any smooth bounded domain $\Omega$, let $\{ \lambda_k, \phi_k \}_{k=1}^{\infty}$ be a sequence of the eigenvalues
and the corresponding $L^2(\Omega)$-normalized eigenvectors of the Dirichlet Laplacian $-\Delta$ in $\Omega$,
\[\begin{cases}
-\Delta \phi_k = \lambda_k \phi_k \quad \text{in } \Omega \quad \text{and} \quad \phi_k = 0 \quad \text{on } \pa \Omega,\\
\|\phi_k\|_{L^2(\Omega)} = 1
\end{cases}\]
where $0 < \lambda_1 < \lambda_2 \le \lambda_3 \le \cdots$.
Introduce a space
\[\mv^s(\Omega) = \left\{u = \sum_{i=1}^{\infty} a_i \phi_i \in L^2(\Omega): \|u\|^2_{\mathcal{V}^s(\Omega)} := \sum_{i=1}^{\infty} a_i^2 \lambda_i^{2s} < \infty \right\}.\]
Then the \textit{spectral Laplacian} is defined as
\[(-\Delta)^s u = \sum_{i=1}^{\infty} a_i \lambda_i^{2s} \phi_i \quad \text{for any } u = \sum_{i=1}^{\infty} a_i \phi_i \in \mv^s(\Omega).\]
It is known that
\[\mv^s(\Omega) = \left\{u = \text{tr}|_{\Omega \times \{0\}} U: U \in \hc \right\} = \begin{cases}
H^s(\Omega) &\text{for } 0 < s < 1/2,\\
H^s_{00}(\Omega) &\text{for } s = 1/2,\\
H^s_0(\Omega) &\text{for } 1/2 < s < 1
\end{cases}\]
where $H^s(\Omega)$ is the usual fractional Sobolev space,
$H^s_0(\Omega)$ is the closure of $C_c^{\infty}(\Omega)$ with respect to the Sobolev norm $\|\cdot\|_{H^s(\Omega)}$ and
\[H_{00}^{1/2}(\Omega) := \left\{u \in H^{1/2}(\Omega): \int_\Omega {u(x)^2 \over \text{dist}(x,\pa \Omega)}\, dx < \infty\right\}\]
(refer to \cite{CaL}).

On the other hand, for any $s \in (0,1)$ and $u \in H^s(\mr^N)$, we are capable of defining the fractional Laplacian by using the integral representation
\begin{equation}\label{eq-frac}
(-\Delta)^s u(x) = c_{N,s}\, \text{P.V.}\, \int_{\mr^N} \frac{u(x)-u(y)}{|x-y|^{N+2s}} dy.
\end{equation}
Here the exact value of $c_{N,s} > 0$ (as well as other constants such as $\kappa_s$ or $p_{N,s}$ below) can be found at the last part of the previous section.
If this operator is restricted to functions in $H^s_0(\Omega)$, then it is called the \textit{restricted fractional Laplacian}.

To compare two different fractional Laplacians, the reader is advised to check the papers \cite{MN, SV, BSV}.

We set
\begin{equation}\label{eq-space}
\mh = \begin{cases}
\mv^s(\Omega) &\text{if the spectral fractional Laplacian is concerned},\\
H^s_0(\Omega) &\text{if the restricted fractional Laplacian is concerned.}
\end{cases}
\end{equation}

\begin{rem}
At the first glance, the boundary condition of \eqref{eq-main}, that is, $u = 0$ in $\pa \Omega$ for $0 < s < 1/2$ may be ambiguous because $H_0^s(\Omega) = H^s(\Omega)$.
However, elliptic regularity guarantees that $u$ is bounded, so the representation formula makes sense.
It is continuous up to the boundary and has zero boundary values.
\end{rem}

\subsection{Localization of Fractional Laplacians}
For a fixed function $u \in \mv^s(\Omega)$ (or $H^s(\mr^N)$), let us set $U \in \hc$ (or $\drn$, respectively) to be the $s$-harmonic extension of $u$, namely, a unique solution of the equation
\[\begin{cases}
\text{div}(t^{1-2s} \nabla U) = 0 &\text{in } \mc \quad (\text{or } \mr^{N+1}_+),\\
U = 0 &\text{on } \pa_L \mc \quad (\text{or } \pa_L \mr^{N+1}_+ = \emptyset),\\
U(\cdot,0)= u &\text{on } \Omega \quad (\text{or } \mr^N).
\end{cases}\]
Then by the celebrated results of Caffarelli-Silvestre \cite{CS} (for the Euclidean space $\mr^N$) and Cabr\'e-Tan \cite{CT} (for bounded domains $\Omega$, see also \cite{ST, CDDS, T2}), it holds that
\begin{equation}\label{eq-CS}
(-\Delta)^s u (x)= \pa_{\nu}^s U (x) := - \kappa_s \lim_{t \to 0+} t^{1-2s} \frac{\pa U}{\pa t}(x,t) \quad \text{for } x \in \Omega \text{ (or } \mr^N).
\end{equation}
Moreover, if $u \in H^s(\mr^N)$, then the Poisson representation formula gives that
\begin{equation}\label{eq-Poi}
U(x,t) = p_{N,s} \int_{\mr^N} {t^{2s} \over (|x-y|^2+t^2)^{N+2s \over 2}}\, u(y)\, dy
\end{equation}
while for $u \in \mv^s(\Omega)$ it is possible to describe $U$ in terms of a series (refer to \cite{CDDS}).

\medskip
As a result, if the spectral fractional Laplacian is concerned, then the $s$-harmonic extension $U_{\ep} \in \hc$ of a
solution $u_{\ep} \in \mv^s(\Omega)$ to problem \eqref{eq-main} satisfies
\begin{equation}\label{eq-ext}
\begin{cases}
\text{div}(t^{1-2s} \nabla U_{\ep}) = 0 &\text{in } \mc, \\
U_{\ep} = 0 &\text{on } \pa_L \mc,\\
U_{\ep} = u_{\ep} &\text{on } \Omega \times \{0\},\\
\pa_{\nu}^s U_{\ep} = u_{\ep}^{p-\ep} &\text{on } \Omega \times \{0\}.
\end{cases}
\end{equation}
In light of the Sobolev inequality \eqref{eq-Sobolev}, we see
\begin{equation}\label{eq-U_ep}
\|U_\ep\|_{\hc}^2 = \|u_\ep\|_{L^{p+1-\ep}(\Omega)}^{p+1-\ep} \le C \|u_\ep\|_{\mv^s(\Omega)}^{p+1-\ep}.
\end{equation}
Therefore if we have $\sup_{\ep > 0} \|u_\ep\|_{\mv^s(\Omega)} < + \infty$,
then $\sup_{\ep > 0} \|U_\ep\|_{\hc} < + \infty$.
Moreover, by the strong maximum principle (\cite[Corollary 4.12]{CS2} or \cite[Lemma 2.7]{FW}), it holds that $U_{\ep} > 0$ in $\mc$.

A similar (and in fact simpler) formulation is available when the restricted fractional Laplacian is studied.
In this case, the equation we have to consider is
\begin{equation}\label{eq-ext-2}
\begin{cases}
\text{div}(t^{1-2s} \nabla U_{\ep}) = 0 &\text{in } \mr^{N+1}_+, \\
U_{\ep} = 0 &\text{on } (\mr^N \setminus \Omega) \times \{0\},\\
U_{\ep} = u_{\ep} &\text{on } \Omega \times \{0\},\\
\pa_{\nu}^s U_{\ep} = u_{\ep}^{p-\ep} &\text{on } \Omega \times \{0\}.
\end{cases}
\end{equation}

\subsection{Green's Functions of Fractional Laplacians}
In this subsection, we review Green's functions.

\medskip
We consider first the case when the fractional Laplacian is defined in terms of the spectra of the Laplacian. We refer to \cite{CKL} for more details.

Let $G$ be Green's function of the the spectral fractional Laplacian $(-\Delta)^s$ on a smooth bounded domain $\Omega$ with the zero Dirichlet boundary condition.
Then it can be regarded as the trace of Green's function $G_{\mc} = G_{\mc}(z,x)$ ($z \in \mc$, $x \in \Omega$) for the Dirichlet-Neumann problem on the extended domain $\mc$ which satisfies
\begin{equation}\label{eq-Green}
\begin{cases}
\text{div} (t^{1-2s} \nabla G_{\mc}(\cdot, x)) =0 &\text{in } \mc,\\
G_{\mc}(\cdot, x) = 0 &\text{on } \pa_L \mc,\\
\pa_{\nu}^{s} G_{\mc}(\cdot, x) = \delta_x &\text{on } \Omega\times \{0\}
\end{cases}
\end{equation}
where $\delta_x$ is the Dirac delta function on $\mathbb{R}^n$ with center at $x \in \Omega$.

Green's function $G_{\mc}$ on the half-cylinder $\mc$ can be decomposed into the singular and regular parts.
The singular part is given by Green's function
\begin{equation}\label{eq-Green-R}
G_{\mr^{N+1}_{+}}((x,t),y) := \frac{ {\gamma_{N,s}}}{|(x-y,t)|^{N-2s}} \end{equation}
on the half-space $\mr^{N+1}_{+}$ satisfying
\begin{equation}\label{eq-prop-G}
\begin{cases}
\text{div} \(t^{1-2s} \nabla_{(x,t)} G_{\mr^{N+1}_{+}} ((x,t),y)\) = 0 &\text{in } \mr^{N+1}_+,\\
\pa_{\nu}^{s} G_{\mr^{N+1}_{+}} ((x,0),y)= \delta_{y}(x) &\text{on } \mr^N = \pa \mr^{N+1}_+
\end{cases}
\end{equation}
for each $y \in \mr^N$.
The regular part is given as the function $H_{\mc} : \mc \to \mr$ which solves
\begin{equation}\label{eq-prop-H}
\begin{cases}
\text{div}\(t^{1-2s} \nabla_{(x,t)}H_{\mc}((x,t),y) \)= 0 &\text{in } \mc,\\
H_{\mc}((x,t),y) = \dfrac{\gamma_{N,s}}{|(x-y,t)|^{N-2s}} &\text{on } \pa_L \mc,\\
\pa_{\nu}^{s}H_{\mc} ((x,0),y) = 0 &\text{on } \Omega \times \{0\}
\end{cases}
\end{equation}
for any $y \in \Omega$.
Its existence can be verified in a variational method (see Lemma 2.2 in \cite{CKL}).
We then have
\[G_{\mc}((x,t),y) = G_{\mr^{N+1}_{+}} ((x,t),y) - H_{\mc}((x,t),y).\]
Now, letting $H(x,y) = H_{\mc} ((x,0),y)$, we can decompose $G(x,y) = G_{\mc}((x,0),y)$ as follows.
\[G(x,y) = {\gamma_{N,s} \over |x-y|^{N-2s}} - H(x,y).\]

Let us recall some regularity properties of the function $H$. For any index $\alpha \in (\mn \cup \{0\})^N$, the partial derivatives $\pa_{x}^{\alpha} H_{\mc}$ of $H_{\mc}$ in the $x$-variable always exist (see Lemma \ref{lem-high-2} and Section 2 of \cite{CKL}). In addition, it follows from \eqref{eq-prop-H} that
\[\begin{cases}
\text{div}\(t^{1-2s} \nabla_{(x,t)}\pa_x^{\alpha}H_{\mc}((x,t),y) \)= 0 &\text{in } \mc,\\
\pa_{\nu}^{s} \pa_x^{\alpha} H_{\mc} ((x,0),y)=0 &\text{on } \Omega \times \{0\}.
\end{cases}\]
Therefore, by applying \cite[Lemma 4.5]{CS2} to each $\pa_{x}^{\alpha} H_{\mc}$, we see that there is a constant $C = C(\alpha, r,\xi) > 0$ such that
\begin{equation}\label{eq-prop-h1}
|\pa_{x}^{\alpha} H_{\mc}((x,t),y)| \leq C
\end{equation}
and
\begin{equation}\label{eq-prop-h2}
\left| t^{1-2s}\pa_t \pa_{x}^{\alpha} H_{\mc}((x,t),y)\right| \leq C
\end{equation}
for all $(x,t) \in B_+^{N+1}((\xi,0), r)$ provided that $\xi \in \Omega$ and $r > 0$ satisfy the condition $r < \text{dist}(\xi,\pa \Omega)$.
%This property is important especially when we take a limit $\ep \to 0$ on certain values depending on $\ep$ as it can be seen in the subsequent sections.

\medskip
When the restricted fractional Laplacian is dealt with, we observe that the above discussion is still valid
once we let $\mc = \mr^{N+1}_+$ and substitute the boundary conditions in \eqref{eq-Green} and \eqref{eq-prop-H} with
\[G_{\mc}(\cdot, x) = 0 \quad \text{on } \pa_B\mc \quad \text{and} \quad H_{\mc}((x,t),y) = {\gamma_{N,s} \over |(x-y,t)|^{N-2s}} \quad \text{on } \pa_B\mc\]
respectively, where $\pa_B \mc := (\mr^N \setminus \Omega) \times \{0\}$.
(The function $G_{\mc}$ in this paragraph should not be confused with the fundamental solution $G_{\mr^{N+1}_+}$ in \eqref{eq-Green-R}.)

\subsection{Sharp Sobolev and Trace Inequalities} \label{subsec_sobolev_trace}
Given any $\lambda > 0$ and $\xi \in \mr^N$, let $w_\lx$ be the \textit{bubble} defined by
\begin{equation} \label{eq-bubble}
w_\lx (x) = \alpha_{N,s} \( \frac{\lambda}{\lambda^2+|x-\xi|^2}\)^{\frac{N-2s}{2}} \quad \text{for } x \in \mr^N.
\end{equation}
Then it is true that
\begin{equation}\label{eq-Sobolev}
\( \int_{\mr^N} |u|^{p+1} dx \)^{\frac{1}{p+1}} \leq \mathcal{S}_{n,s} \( \int_{\mr^N} |(-\Delta)^{s/2} u|^2 dx \)^{1 \over 2},
\end{equation}
and the equality holds if and only if $u(x) = c w_{\lambda,\xi}(x)$ for any $c > 0,\, \lambda>0$ and $\xi \in \mr^N$ (refer to \cite{L2, CaL, FL}).
Furthermore, it was shown in \cite{CLO, L1, L3} that if a suitable decay assumption is imposed, then $\{ w_\lx: \lambda>0, \xi \in \mr^N \}$ is the set of all solutions for the problem
\[(-\Delta)^s u = u^p,\quad u > 0 \quad \text{in } \mr^N\quad \text{and}\quad \lim_{|x|\to \infty} u(x) = 0.\]

Denote also the $s$-harmonic extension of $w_{\lambda,\xi}$ by $W_{\lambda,\xi} \in \drn$ so that $W_\lx$ solves
\begin{equation}\label{wlyxt}
\begin{cases}
\text{div}(t^{1-2s}W_{\lambda,\xi}(x,t)) = 0 &\text{in } \mr^{N+1}_+,\\
W_{\lambda,\xi}(x,0) = w_{\lambda,\xi}(x) &\text{on } \mr^N.
\end{cases}
\end{equation}
It follows that for the Sobolev trace inequality
\begin{equation}\label{eq-sharp-trace}
\( \int_{\mr^N} |U(x,0)|^{p+1} dx \)^{\frac{1}{p+1}} \leq \sqrt{\kappa_s}\, \mathcal{S}_{n,s} \( \int_0^{\infty}\int_{\mr^N} t^{1-2s} |\nabla U(x,t)|^2 dx dt \)^{1 \over 2},
\end{equation}
the two sides are equal if and only if $U(x,t) = c W_{\lambda,\xi}(x,t)$ for any $c > 0,\ \lambda>0$ and $\xi \in \mr^N$.

\subsection{Concentration-Compactness Principle} \label{subsec-cc}
Firstly, we treat the spectral fractional Laplacian case.
Let $PW_\lx$ stand for the projection of the bubble $W_\lx$ into $\hc$, that is, the solution of
\begin{equation}\label{eq-pw}
\begin{cases}
\text{div}(t^{1-2s} \nabla PW_\lx) = 0 &\text{in}~\mc,
\\
PW_\lx =0 &\text{on}~\pa_L \mc,
\\
\pa_{\nu}^s PW_\lx = \pa_{\nu}^s W_\lx = W_\lx^p &\text{on}~\Omega \times \{0\},
\end{cases} \end{equation}
and $Pw_\lx = \text{tr}|_{\Omega \times \{0\}} PW_\lx$.
By the maximum principle \cite[Lemma 2.1]{CKL}, we have $0 \le PW_\lx \le W_\lx$ in $\mc$.
Also \cite[Lemma C.1]{CKL} says that
\begin{equation}\label{eq-pw-exp}
PW_\lx(z) = W_\lx(z) - c_1 \lambda^{N-2s \over 2} H(z,\sigma) + o(\lambda^{N-2s \over 2})
\end{equation}
uniformly for $z \in \mc$ where $c_1 > 0$ is the number appeared in \eqref{eq-phi}.

The following result is a fractional version of Struwe \cite{S}.
\begin{lem}\label{lem-cc-bounded}
Let $\{ U_n \}_{n \in \mn}$ be a sequence of solutions to \eqref{eq-ext} with $\ep =\ep_n \searrow 0$ which satisfies the norm condition $\sup_{n \in \mn} \|U_n\|_{\hc} < \infty$.
Then there exist an integer $m \in \mn \cup \{0\}$ and a sequence $\{(\lambda_n^i, x_n^i )\}_{n \in \mn} \subset (0,\infty) \times \Omega$ of positive numbers and points for each $i = 1,\cdots, m$ such that
\begin{equation}\label{eq-cc}
R_n := U_n - \(V_0 + \sum_{i=1}^{m} PW_{\lambda_n^i, x_n^i}\) \to 0
\text{ in } \hc \quad \text{as } n \to \infty
\end{equation}
(up to a subsequence) where $V_0$ is the weak limit of $U_n$ in $\hc$, which satisfies
\begin{equation}\label{eq-V-eq}
\begin{cases}
\textnormal{div}(t^{1-2s} \nabla V_0 ) = 0 &\text{in } \mc,\\
V_0 = 0 &\text{on } \pa_L \mc,\\
\pa_{\nu}^s V_0 = V_0^{\frac{N+2s}{N-2s}} &\text{on } \Omega \times \{0\}.
\end{cases}
\end{equation}
In addition, it holds that
\begin{equation}\label{eq-cc-cond}
{1 \over \lambda_n^i}\, \textnormal{dist}(x_n^i, \pa \Omega) \to \infty
\quad \text{and} \quad
\frac{\lambda_n^i}{\lambda_n^j} + \frac{\lambda_n^j}{\lambda_n^i} + {1 \over \lambda_n^i \lambda_n^j}\, |x_n^i -x_n^j|^2 \to \infty \quad \text{as}~ n \to \infty
\end{equation}
for all $1 \le i \neq j \le m$.
\end{lem}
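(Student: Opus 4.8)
The statement to prove is Lemma~\ref{lem-cc-bounded}, the concentration-compactness decomposition for bounded sequences of solutions to the extended problem \eqref{eq-ext}. The strategy is the classical Struwe-type iteration, carried out in the weighted Sobolev setting $\hc$ rather than $H^1_0$. First I would extract a weakly convergent subsequence $U_n \rightharpoonup V_0$ in $\hc$ (the norm bound gives this, and $\hc$ is a Hilbert space), and observe that the weak limit $V_0$ solves \eqref{eq-V-eq}: indeed the trace embedding $\hc \hookrightarrow L^{p+1}(\Omega\times\{0\})$ is continuous, hence $u_n = U_n(\cdot,0) \rightharpoonup V_0(\cdot,0)$ in $L^{p+1}$, and since $\ep_n\searrow0$ one passes to the limit in the weak formulation $\int t^{1-2s}\nabla U_n\cdot\nabla\varphi = \kappa_s\int u_n^{p-\ep_n}\varphi$ against test functions $\varphi \in \hc$, using that $p-\ep_n \to p$ and $u_n$ is bounded in $L^{p+1}$. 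The subtlety here is that the limiting exponent $p=(N+2s)/(N-2s)$ is critical, so the nonlinearity is \emph{not} weakly continuous; this is precisely why bubbles appear.

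**The bubble-extraction iteration.** Set $R_n^{(0)} := U_n - V_0 \rightharpoonup 0$. If $R_n^{(0)} \to 0$ strongly we are done with $m=0$. Otherwise, by a profile-decomposition / energy-quantization argument one locates the concentration: one shows $\|R_n^{(0)}\|_{\hc}^2$ (or equivalently the trace $L^{p+1}$-norm) stays bounded away from zero, and then uses a concentration function $Q_n(r) = \sup_{x}\int_{B_+((x,0),r)} t^{1-2s}|\nabla R_n^{(0)}|^2$ to pick scales $\lambda_n^1 \to 0$ and centers $x_n^1$ so that a rescaled sequence $\lambda_n^{1\,(N-2s)/2} R_n^{(0)}(\lambda_n^1 \cdot + x_n^1, \lambda_n^1 \cdot)$ converges weakly to a nontrivial solution of the limiting equation $\mathrm{div}(t^{1-2s}\nabla W)=0$ in $\mr^{N+1}_+$ with $\pa_\nu^s W = W^p$ on $\mr^N$ — which by the classification in Subsection~\ref{subsec_sobolev_trace} must be a bubble $W_{\lambda,\xi}$; after renormalizing $(\lambda,\xi)$ into the scales this is $W_{\lambda_n^1,x_n^1}$. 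One then replaces $W_{\lambda_n^1,x_n^1}$ by its projection $PW_{\lambda_n^1,x_n^1}$ (the difference is $o(1)$ in $\hc$ by \eqref{eq-pw-exp}, since $\lambda_n^1 \to 0$ and $H$ is bounded by \eqref{eq-prop-h1}), define $R_n^{(1)} := U_n - V_0 - PW_{\lambda_n^1,x_n^1}$, and prove the Pythagorean energy drop $\|R_n^{(1)}\|_{\hc}^2 = \|U_n\|_{\hc}^2 - \|V_0\|_{\hc}^2 - \|W_{1,0}\|_{\drn}^2 + o(1)$: the cross terms vanish because of the weak convergence of the rescaled remainder and the scaling-invariance of the $\hc$-norm of a bubble. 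Since each extracted bubble carries a \emph{fixed} quantum of energy $\|W_{1,0}\|_{\drn}^2 = \kappa_s\|w_{1,0}\|_{L^{p+1}}^{p+1} > 0$ independent of $n$, and $\sup_n\|U_n\|_{\hc}<\infty$, the process terminates after finitely many steps, say $m$, at which point $R_n := R_n^{(m)} \to 0$ strongly in $\hc$. This yields \eqref{eq-cc}.

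**Interaction estimates and separation.** It remains to establish \eqref{eq-cc-cond}. The condition $\lambda_n^i\,\mathrm{dist}(x_n^i,\pa\Omega)^{-1}\to\infty$ — i.e.\ the bubbles do not escape to the lateral boundary — follows from the way the scales are selected (the concentration occurs in the interior; if $x_n^i$ approached $\pa\Omega$ too fast relative to $\lambda_n^i$ the rescaled limit would have to vanish on a half-space by the Dirichlet condition on $\pa_L\mc$, contradicting nontriviality). The mutual-separation condition $\lambda_n^i/\lambda_n^j + \lambda_n^j/\lambda_n^i + (\lambda_n^i\lambda_n^j)^{-1}|x_n^i-x_n^j|^2 \to\infty$ is equivalent to the standard statement that distinct bubbles are \emph{asymptotically orthogonal} in $\hc$, i.e.\ $\langle PW_{\lambda_n^i,x_n^i}, PW_{\lambda_n^j,x_n^j}\rangle_{\hc} = o(1)$; if this failed along a subsequence the two profiles would not separate and the two-bubble configuration would collapse to a single bubble, which the iteration was designed to preclude (one would contradict either the energy drop or the weak-$H$ convergence $R_n^{(i)}\rightharpoonup 0$ at the stage bubble $j$ was extracted).

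**Main obstacle.** The technical heart — and the place to be careful — is the rescaling analysis in the weighted/degenerate setting on the \emph{bounded cylinder} $\mc$: one must show that after rescaling about $(x_n^i,0)$ by $\lambda_n^i$, the lateral boundary $\pa_L\mc$ recedes to infinity, the weight $t^{1-2s}$ is scale-invariant (so the extension equation is preserved), and the limiting problem on $\mr^{N+1}_+$ genuinely has the bubble classification available; the Caffarelli--Silvestre correspondence plus the classification cited in Subsection~\ref{subsec_sobolev_trace} supply exactly this, but matching up the normalizations and checking that the trace of the weak limit is nonzero (rather than being lost in the weighted Sobolev space, where traces are more delicate) requires the weighted trace embedding and a concentration-function lower bound. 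Since this lemma is stated as a known fractional analogue of Struwe's result — and the excerpt cites Almaraz, Fang--Gonz\'alez and Palatucci--Pisante for it — in the paper itself the proof can legitimately be reduced to quoting those references and indicating the minor adaptation needed to accommodate the projected bubbles $PW_\lx$ in place of $W_\lx$ and the mixed boundary condition on $\mc$.
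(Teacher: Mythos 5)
Your proposal is correct and matches the paper: the paper gives no proof of this lemma at all, simply citing \cite{A} and \cite{FG} and asserting that their argument carries over to the present setting, which is exactly the reduction you identify in your final paragraph. Your sketch of the Struwe-type bubble extraction (weak limit, iterated rescaling, energy quantization, asymptotic orthogonality) is the standard argument those references implement, so there is nothing further to compare beyond noting that the paper stops at the citation.
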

\begin{proof}
See \cite{A} and \cite{FG} where an analogous conclusion is deduced in the setting of asymptotically hyperbolic manifolds.
Since their approach still works for our case, we omit the proof.
\end{proof}

Let $v_0 = \text{tr}|_{\Omega \times \{0\}} V_0$ and $r_n = \text{tr}|_{\Omega \times \{0\}} R_n$.

Extracting a subsequence of $\{U_n\}_{n \in \mn}$ and reordering the indices if necessary, we may assume that
\begin{equation}\label{eq-lam-order}
\lambda_n^1 \le \lambda_n^2 \le \cdots \le \lambda_n^m \quad \text{for all } n \in \mn \quad \text{and} \quad x_n^i \to x_0^i \in \overline{\Omega} \quad \text{as } n \to \infty.
\end{equation}
Using the Kelvin transform and the moving plane argument, Choi \cite[Lemma 4.1]{Ch} proved that $\{U_n\}_{n \in \mn}$ are uniformly bounded near the boundary $\pa \Omega \times[0,\infty)$.
That is, there exists constants $\delta >0$ and $C>0$ such that
\[\sup_{n \in \mn} \, \sup_{\{(x,t) \in \mc: \text{dist}(x, \pa \Omega) <\delta\}} |U_n(x,t)| \leq C.\]
Hence
\begin{equation}\label{eq-x_i}
\text{dist}(x_0^i, \pa \Omega) \ge \delta \quad \text{for } i = 1, \cdots, m.
\end{equation}

\medskip
For the restricted fractional Laplacian, we define $PW_{\lx}$ by \eqref{eq-pw} whose second line is replaced with $PW_{\lx} = 0$ in $\mr^N \setminus \Omega$.
Then it is not hard to draw analogous results to Lemma \ref{lem-cc-bounded} (cf. \cite{PP}) and \eqref{eq-pw-exp}.
Besides one can check that \eqref{eq-x_i} still holds as follows:
If the domain $\Omega$ is strictly convex, we apply the moving plane method with the maximum principle for small domains (given in \cite[Lemma 5.1]{RS2}), getting
\begin{equation}\label{eq-x_i-2}
\sup_{n \in \mn} \, \sup_{\text{dist}(x, \pa \Omega) <\delta} |u_n(x)| \leq C.
\end{equation}
In the case that $\Omega$ does not have the convexity assumption, we first use the conformal invariance of equation \eqref{eq-main}
(refer to \cite[Proposition A.1]{RS}) and then employ the moving plane method to obtain \eqref{eq-x_i-2}.
Now combining \eqref{eq-cc} and \eqref{eq-x_i-2} gives \eqref{eq-x_i} at once.
See \cite[Section 2]{H} to recall the argument used for the local case $s = 1$.

\medskip
In the next two sections, further information on blow-up rates $\{\lambda_n^i\}_{i=1}^m$ and points $\{x_n^i\}_{i=1}^m$ in the decomposition \eqref{eq-cc} will be examined.
In what follows, we simply denote $w_{1,0}$ and $W_{1,0}$ by $w$ and $W$, respectively.
Since $W = W(x,t)$ is radially symmetric in the $x$-variable, we will often write $W(x,t) = W(\rho, t)$ where $\rho = |x|$.
In addition, the operator $(-\Delta)^s$ is understood as the \textit{spectral} fractional Laplacian (and hence $\Sigma = \pa \Omega$ in equation \eqref{eq-main}) in Sections \ref{sec-moving}, \ref{sec-point} and \ref{sec-bi}.
Consideration on the \textit{restricted} fractional Laplacian is postponed to Section \ref{sec-res}.

\section{Moving Sphere Argument and Pointwise Upper Bound} \label{sec-moving}
The aim of this section is to obtain a sharp pointwise upper bound of solutions $U_{\ep}$ to \eqref{eq-ext}. To this end, we will employ the method of moving spheres (refer to \cite{Sch, ChaC, LZ}).

\begin{prop}\label{prop-moving}
Let $r_0 > 0$ be any fixed small number.
Assume that $\{M_{\ep}\}_{\ep > 0}$ is a family of positive numbers such that $\lim_{\ep \to \infty} M_{\ep} = \infty$ and $\lim_{\ep \to \infty} M_{\ep}^{\ep}=1$.
If a family $\{V_{\ep}\}_{\ep > 0}$ of positive functions which satisfy
\begin{equation}\label{eq-a-eq1}
\begin{cases}
\textnormal{div}(t^{1-2s} \nabla V_{\ep})=0 &\textnormal{in } B^N \Big(0, r_0M_{\ep}^{\frac{2}{N-2s}}\Big) \times (0,\infty),
\\
\pa_{\nu}^s V_{\ep} = V_{\ep}^{p-\ep} &\textnormal{on } B^N \Big(0, r_0M_{\ep}^{\frac{2}{N-2s}}\Big),\\
\|V_{\ep}\|_{L^{\infty}\Big(B^{N+1}_+\Big(0,r_0M_{\ep}^{\frac{2}{N-2s}}\Big)\Big)} \le c
\end{cases}
\end{equation}
for some $c > 0$, and
\begin{equation}\label{eq-a-eq2}
V_{\ep} \rightharpoonup W \text{ weakly in } \drn \quad \text{as } \ep \to 0,
\end{equation}
then there are constants $C > 0$ and $0 < \delta_0 < r_0$ independent of $\ep > 0$ such that
\[V_{\ep} (z) \le C W(z) \quad \text{for all } z \in B^{N+1}_+\Big(0, \delta_0 M_{\ep}^{\frac{2}{N-2s}}\Big).\]
\end{prop}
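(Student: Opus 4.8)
The plan is to implement the method of moving spheres directly on the extended problem \eqref{eq-a-eq1}, following Schoen's strategy as adapted to the weighted operator $\mathrm{div}(t^{1-2s}\nabla\,\cdot\,)$. First I would record the conformal properties of the operator and of the Neumann-type boundary condition. For a point $y \in \mathbb{R}^N$ and radius $\mu > 0$, write the Kelvin-type transform
\[
V_{\ep}^{y,\mu}(z) = \(\frac{\mu}{|z-y|}\)^{N-2s} V_{\ep}\(y + \frac{\mu^2 (z-y)}{|z-y|^2}\),
\]
where $z = (x,t)$ and the inversion is the $(N+1)$-dimensional one fixing the hyperplane $\{t=0\}$. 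The key algebraic facts are: (i) $\mathrm{div}(t^{1-2s}\nabla V_{\ep}^{y,\mu}) = 0$ wherever $V_{\ep}$ is extension-harmonic, because the weight $t^{1-2s}$ transforms correctly under the inversion (this is the weighted analogue of conformal covariance of the conformal Laplacian in dimension $N+1-2(1-s)$); and (ii) on the boundary, $\partial_\nu^s V_{\ep}^{y,\mu}(x) = \(\frac{\mu}{|x-y|}\)^{N+2s} (\partial_\nu^s V_{\ep})(x^{y,\mu})$, so the inverted function satisfies $\partial_\nu^s V_{\ep}^{y,\mu} = (V_{\ep}^{y,\mu})^{p} \cdot |x-y|^{\ep(N-2s)} \cdot (\text{small correction from } p-\ep \text{ vs } p)$. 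The near-criticality means this extra factor is $1 + O(\ep \log|x-y|)$, uniformly on the relevant scales given $M_\ep^\ep \to 1$; I would isolate this error term carefully, since it is what the moving-sphere comparison must absorb.

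Next, the core iteration. For each fixed $y$ near $0$, define
\[
\bar\mu(y) = \sup\{\mu > 0 : V_{\ep}^{y,\nu}(z) \le V_{\ep}(z)\ \text{for all } 0 < \nu < \mu \text{ and all admissible } z \text{ with } |z - y| \ge \nu\}.
\]
Using the weak convergence \eqref{eq-a-eq2} to the bubble $W$, which is itself invariant under the exact critical inversions at its center, one shows first a \emph{start}: for $\mu$ small enough (depending on $y$ but uniform in $\ep$) the inequality $V_{\ep}^{y,\mu} \le V_{\ep}$ does hold; this uses the strict positivity of $V_{\ep}$ and $\partial_\nu^s V_{\ep} > 0$ together with the maximum principle for $\mathrm{div}(t^{1-2s}\nabla\,\cdot\,)$ with a weighted Hopf lemma at $\{t=0\}$ (available from \cite{CS2}). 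Then one shows that $\bar\mu(y)$ cannot be too small: if $\bar\mu(y) \le \mu_0$ for a value $\mu_0$ that stays bounded away from the critical scale, the strong maximum principle / Hopf lemma forces $V_{\ep}^{y,\bar\mu(y)} \equiv V_{\ep}$, which is impossible because $V_{\ep}$ is close to $W$ and $W$ is not invariant under inversion at $y \ne 0$ at scale $\bar\mu(y)$ — here the subcritical error term enters and must be controlled so that it does not spoil the rigidity. This yields a uniform lower bound $\bar\mu(y) \ge \mu_1 > 0$ for all $y$ in a fixed small ball and all small $\ep$.

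Finally I would convert the moving-sphere inequality into the pointwise bound. The estimate $V_{\ep}^{y,\mu}(z) \le V_{\ep}(z)$ for all $\mu \le \mu_1$, all $y$ in a fixed ball $B^N(0,\delta_1)$, and all $z$ with $|z-y| \ge \mu$, is precisely a Harnack-type decay statement: averaging over $y \in B^N(0,\delta_1)$ (or choosing $y$ optimally as a function of $z$) gives $V_{\ep}(z) \le C |z|^{-(N-2s)}$ for $z$ in the range $\delta_1 \le |z| \le \delta_0 M_\ep^{2/(N-2s)}$, and the hypothesis $\|V_\ep\|_{L^\infty} \le c$ handles $|z| \le \delta_1$. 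Since $W(z) \sim \alpha_{N,s}\gamma\,|z|^{-(N-2s)}$ at infinity (the decay of $W = W_{1,0}$ established in Appendix \ref{sec-W}), and $W(z) \ge c' > 0$ on $|z| \le \delta_1$, combining the two regimes gives $V_{\ep}(z) \le C W(z)$ throughout $B^{N+1}_+(0,\delta_0 M_\ep^{2/(N-2s)})$, which is the claim.

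\textbf{Main obstacle.} The delicate point is the subcritical perturbation: the inverted equation carries the factor $|x-y|^{\ep(N-2s)}$ and the exponent $p-\ep$ rather than $p$, so the comparison function $V_{\ep}^{y,\mu}$ does not satisfy exactly the same equation as $V_{\ep}$. On the scale $|x| \lesssim M_\ep^{2/(N-2s)}$ this factor is $M_\ep^{O(\ep)} = 1 + o(1)$ by the hypothesis $M_\ep^\ep \to 1$, but one must check that the resulting lower-order term in the differential inequality for the difference $V_{\ep} - V_{\ep}^{y,\mu}$ is genuinely small enough (e.g.\ controllable by the zeroth-order coefficient with the right sign) so that the maximum principle and Hopf lemma still apply uniformly in $\ep$. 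Keeping all constants in this step independent of $\ep$ — which forces the conclusion to hold only up to radius $\delta_0 M_\ep^{2/(N-2s)}$ with $\delta_0$ possibly much smaller than $r_0$ — is where the real work lies.
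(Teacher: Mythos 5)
Your framework (moving spheres adapted to the extension problem, the subcritical correction controlled by $M_\ep^\ep\to1$, the decay of $W$ from Appendix \ref{sec-W}) is in the right spirit, and the obstacle you flag as the main one is in fact benign: for $|x-y|\ge\mu$ the extra factor $(\mu/|x-y|)^{(N-2s)\ep}$ is $\le 1$, so the perturbation enters the differential inequality for $V_\ep-V_\ep^{y,\mu}$ with a favorable sign (this is exactly Remark \ref{rem-moving} (2)). The genuine gap is in your final step, the passage from the moving-sphere inequality to the pointwise upper bound. The inequality $V_\ep^{y,\mu}(z)\le V_\ep(z)$ for $|z-y|\ge\mu$, with $y$ in a fixed small ball and $\mu\le\mu_1$, reads $V_\ep(z)\ge (\mu/|z-y|)^{N-2s}V_\ep\big(y+\mu^2(z-y)/|z-y|^2\big)$: it produces \emph{lower} bounds for $V_\ep$ at far points, and (by applying the involutive Kelvin transform to both sides) upper bounds only \emph{inside} $B(y,\mu)$, i.e., near the origin, where boundedness is already hypothesized. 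No averaging or optimization over $y\in B^N(0,\delta_1)$ turns this into $V_\ep(z)\le C|z|^{-(N-2s)}$ for large $|z|$; the implication runs in the wrong direction.

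The paper supplies the two mechanisms your sketch is missing. First, the moving spheres are centered at the origin only, with radii confined to a small interval around $1$, and are run under a \emph{contradiction hypothesis}, namely that $\min_{|z|=r_k}V_{\ep_k}>(1+\zeta_2)W(r_k,0)$ for some radii $r_k$; it is precisely this hypothesis that gives strict positivity of $V_{\ep_k}-V_{\ep_k}^{\lambda}$ on the outer sphere $|z|=r_k$, allowing the sphere radius to be pushed past $1$ and contradicting $V_\ep\rightharpoonup W$. This yields only the bound $\min_{|z|=r}V_\ep\le(1+\zeta_2)W(r,0)$ (Lemma \ref{lem-moser-1}). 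Second, that minimum bound is upgraded to a supremum bound by a Harnack inequality on dyadic half-annuli for the rescalings $r^{(N-2s)/2}V_\ep(rz)$, obtained by Moser iteration; this requires smallness of $\int V_\ep^{p+1}$ over the large annulus, which is extracted from the Green-function representation combined with the minimum bound. The same Green representation also provides the lower bound $V_\ep(z)\ge(1-\eta)\alpha_{N,s}|z|^{-(N-2s)}$ up to scale $\delta_1 M_\ep^{2/(N-2s)}$, which you would in any case need to verify your ``start'' and the outer-boundary comparison on the huge domain, since $\inf V_\ep$ over the whole half-ball tends to $0$ with $\ep$. Without a Harnack-type step of this kind, your argument does not close.
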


For the proof of the above proposition, we make some remarks.
\begin{rem}\label{rem-moving}
(1) By \eqref{eq-a-eq1}, \eqref{eq-a-eq2} and the H\"older regularity due to Cabre-Sire \cite{CS2},
if a constant $\zeta_1 > 0$ and a compact set $K \subset \overline{\mr^{N+1}_+}$ are given,
then there exist $\ep_1 > 0$ small and $\alpha \in (0,1)$ such that
\begin{equation}\label{eq-V_ep-W}
\|V_{\ep} - W\|_{C^{\alpha}(K)} \le \zeta_1 \quad \text{for } \ep \in (0, \ep_1).
\end{equation}

\medskip \noindent (2) For any function $F$ in $\mr^{N+1}_+$, let $F^{\lambda}$ be its Kelvin transform of defined as
\begin{equation} \label{eq-Kelvin}
F(z) = \(\frac{\lambda}{|z|}\)^{N-2s} F\big(z^{\lambda}\big)
\quad \text{where } z^{\lambda} := \frac{\lambda^2 z}{|z|^2} \in \mr^{N+1}_+.
\end{equation}
If we write $D_{\ep}^{\lambda} = V_{\ep} - V_{\ep}^{\lambda}$, then it holds that
\[\pa_{\nu}^s D_{\ep}^{\lambda} = V_{\ep}^{p-\ep} - \({\lambda \over |x|}\)^{(N-2s)\ep} \(V_{\ep}^{\lambda}\)^{p-\ep}
\ge V_{\ep}^{p-\ep} - \(V_{\ep}^{\lambda}\)^{p-\ep} = \xi_{\ep}(x)\, D_{\ep}^{\lambda} \quad \text{for } |x| \ge \lambda \text{ and } t = 0\]
where
\[\xi_{\ep} (x) = \begin{cases}
\dfrac{V_{\ep}^{p-\ep}-\(V_{\ep}^{\lambda}\)^{p-\ep}}{V_{\ep} - V_{\ep}^{\lambda}}(x,0) &\text{if } V_{\ep}(x,0) \ne V_{\ep}^{\lambda}(x,0),\\
(p-\ep) V_{\ep}^{p-1-\ep}(x,0) &\text{if } V_{\ep}(x,0) = V_{\ep}^{\lambda}(x,0).
\end{cases}\]

\medskip \noindent (3) For each $R > 0$, let us introduce Green's function $G^R$ of the spectral fractional Laplacian $(-\Delta)^s$ in $\Omega = B^N(0,R)$ with zero Dirichlet boundary condition
and Green's function $G_{\mc}^{R}$ of equation \eqref{eq-Green} in the cylinder $\mc = B^N (0,R) \times (0,\infty)$.
By the scaling invariance, we have
\[G^R (x,y) = \frac{1}{R^{N-2s}} G^1\( \frac{x}{R}, \frac{y}{R}\) \quad \text{for}~x, y \in B^{N}(0,R)\]
and
\[G_{\mc}^{R} ((x,t),y) = \frac{1}{R^{N-2s}} G_{\mc}^1 \(\(\frac{x}{R},\frac{t}{R}\), \frac{y}{R}\) \quad \text{for } x, y \in B^N(0,R) \text{ and } t > 0.\]
Thus we can decompose Green's function in $B^N(0,R)$ into its singular part and regular part as follows:
\begin{equation}\label{eq-G-R}
G_{\mc}^R ((x,t),y) = {\gamma_{N,s} \over |(x-y,t)|^{N-2s}} - {1 \over R^{N-2s}}H_{\mc}^1 \(\({x \over R},\frac{t}{R}\),{y \over R}\) \quad \text{for } x, y \in B^N(0,R),~ t > 0.
\end{equation}
The precise value of the normalizing constant $\gamma_n$ is given in Notations.
\end{rem}

As a preliminary step, we prove the minimum of $V_{\ep}$ on any half-sphere $\{z \in \mr^{N+1}_+: |z| = r\}$ is controlled by the value $W(r,0)$
whenever $r$ is at most of order $M_{\ep}^{2 \over N-2s}$ and $\ep > 0$ is small enough.
\begin{lem}\label{lem-moser-1}
Let $\{V_{\ep}\}_{\ep > 0}$ be the family in the statement of Proposition \ref{prop-moving}.
Then, for any $\zeta_2 > 0$, there exist small constants $\delta_1 \in (0, r_0)$ and $\ep_2 > 0$ such that
\begin{equation}\label{eq-inf-bound}
\min_{\{z \in \mr^{N+1}_+: |z|=r\}} V_{\ep} (z) \le (1+ \zeta_2) W(r,0) \quad \text{for any } 0 < r \le \delta_1 M_{\ep}^{\frac{2}{N-2s}} \text{ and } \ep \in (0, \ep_2).
\end{equation}
\end{lem}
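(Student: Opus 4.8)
The plan is to extract information about the minimum of $V_\ep$ on small half-spheres directly from the weak convergence $V_\ep \rightharpoonup W$ together with the pointwise $C^\alpha$ estimate \eqref{eq-V_ep-W}. For radii $r$ in a fixed compact range (say $r \le r_*$ for some small $r_*$), Remark \ref{rem-moving} (1) already gives $V_\ep(z) \le (1+\zeta_2)W(r,0)$ uniformly on $|z| = r$ once $\ep$ is small, simply because $W$ is continuous, positive, and radially symmetric in $x$, and the oscillation of $W$ on a half-sphere of fixed radius is swallowed by $\zeta_2 W(r,0)$ after shrinking $r_*$; so the content is genuinely in the range $r_* \le r \le \delta_1 M_\ep^{2/(N-2s)}$, where the half-sphere escapes every fixed compact set and the weak-convergence information alone is no longer enough pointwise.

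For the large-$r$ regime I would argue by contradiction via the Kelvin transform and the maximum principle, i.e. a first application of the moving-sphere machinery set up in Remark \ref{rem-moving} (2). Suppose the claim fails: then there are sequences $\ep_k \to 0$ and $r_k$ with $r_* \le r_k \le \delta_1 M_{\ep_k}^{2/(N-2s)}$ along which $\min_{|z| = r_k} V_{\ep_k}(z) > (1+\zeta_2) W(r_k,0)$. Compare $V_{\ep_k}$ with its Kelvin transform $V_{\ep_k}^{\lambda}$ about a small radius $\lambda$ (first chosen so small that $V_{\ep_k}^{\lambda} \le V_{\ep_k}$ on $\{|z|=\lambda\}$ and on a neighbourhood, using \eqref{eq-V_ep-W} and the explicit asymptotics of $W$, which is its own Kelvin transform about $\lambda = 1$). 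Using the differential inequality $\pa_\nu^s D_{\ep}^{\lambda} \ge \xi_\ep(x)\, D_\ep^\lambda$ from Remark \ref{rem-moving} (2) on $\{|x|\ge \lambda\}$ together with $\text{div}(t^{1-2s}\nabla D_\ep^\lambda)=0$, the maximum principle for the extension operator propagates the sign $D_{\ep_k}^{\lambda} \ge 0$ outward. Increasing $\lambda$ up to a critical value and using the hypothesized strict overshoot at radius $r_k$ to keep $D_{\ep_k}^{\lambda}$ strictly positive, one reaches a contradiction — exactly the standard moving-sphere dichotomy: either the sphere can be moved all the way out (forcing decay incompatible with the overshoot) or equality holds somewhere (forcing $V_{\ep_k}$ to coincide with a Kelvin transform of itself, hence to be a bubble, contradicting the strict inequality and the boundary geometry). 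The precise bookkeeping of how $\delta_1$ must be chosen relative to $\zeta_2$ and $r_0$, and the verification that the weighted maximum principle applies on the unbounded region $\{|x| \ge \lambda\} \times (0,\infty)$ with the Neumann-type condition, is where the real work sits.

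The main obstacle I anticipate is making the moving-sphere comparison uniform in $\ep$ in the \emph{far} region, i.e. for radii comparable to $M_\ep^{2/(N-2s)}$ rather than bounded: there the rescaled functions $V_\ep$ no longer converge to $W$ in any useful topology near $|z|\sim M_\ep^{2/(N-2s)}$, so one cannot simply pass to a limit. The resolution is that the moving-sphere argument only needs (i) a good starting configuration $D_\ep^\lambda \ge 0$ for small $\lambda$, which comes from \eqref{eq-V_ep-W} on a fixed compact set, (ii) the differential inequality of Remark \ref{rem-moving} (2), valid on the whole domain where $V_\ep$ solves \eqref{eq-a-eq1}, and (iii) the $L^\infty$ bound $\|V_\ep\|_\infty \le c$ to guarantee that $\xi_\ep$ — which involves $V_\ep^{p-1-\ep}$ — stays bounded, so the maximum principle for small domains (as in the references \cite{Sch, ChaC, LZ}) is applicable on annular regions. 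The factor $\big(\lambda/|x|\big)^{(N-2s)\ep}$, which is $\le 1$ for $|x|\ge\lambda$, is what lets the $\ep$-subcritical term be dominated by the critical one, and the hypothesis $M_\ep^\ep \to 1$ is what keeps all $\ep$-dependent constants from degenerating as $r$ ranges up to $\delta_1 M_\ep^{2/(N-2s)}$; I would isolate this as the key quantitative lemma before running the contradiction.
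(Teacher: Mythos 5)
Your overall skeleton — reduce to large radii, argue by contradiction, and run a moving-sphere comparison between $V_\ep$ and its Kelvin transforms $V_\ep^\lambda$ using the differential inequality of Remark \ref{rem-moving} (2) and the maximum principle for small domains — is the paper's strategy. But there is a genuine gap in how you propose to start the sphere. You assert that the initial configuration $D_\ep^{\lambda_1}\ge 0$ "comes from \eqref{eq-V_ep-W} on a fixed compact set," supplemented by the $L^\infty$ bound on $V_\ep$ so that "the maximum principle for small domains is applicable on annular regions." Neither suffices: the comparison region is the whole half-annulus $\{\lambda_1<|z|<r_k\}$ with $r_k$ as large as $\delta_1 M_{\ep_k}^{2/(N-2s)}$, and on its far part the compact convergence \eqref{eq-V_ep-W} says nothing, while the small-domain maximum principle needs smallness of $\xi_\ep$ in $L^{N/2s}$ of the bad region (not merely $\xi_\ep\in L^\infty$), and the far annulus has large measure. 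The paper closes exactly this hole with a pointwise far-field \emph{lower} bound, $V_\ep(z)\ge(1-\eta_2)\,\alpha_{N,s}|z|^{-(N-2s)}$ for $R_1\le|z|\le\delta_1 M_\ep^{2/(N-2s)}$ (its \eqref{eq-u_k-2}), proved by comparing $V_\ep$ with the Green's-function solution of the Dirichlet problem on the big ball and using that almost all of the mass $\int V_\ep^{p-\ep}$ sits in a fixed ball $B^N(0,R_1)$. Paired with the upper bound $V_\ep^{\lambda_1}(z)\le(1-2\eta_2)\,\alpha_{N,s}|z|^{-(N-2s)}$ (which \emph{does} follow from compact convergence, since the Kelvin transform only samples $V_\ep$ near the origin), this gives $D_\ep^{\lambda_1}>0$ pointwise on the far part, with no maximum principle needed there. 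This Green's-representation step is the "key quantitative lemma" you correctly sense is needed but never supply.

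A secondary issue: your closing dichotomy ("the sphere moves all the way out, forcing decay incompatible with the overshoot, or equality holds and $V_{\ep_k}$ is a bubble") is the Liouville-theorem version and is not the contradiction used here. In the paper the overshoot at $|z|=r_k$ keeps $D_k^{\lambda}>0$ on the \emph{outer} boundary for all $\lambda$ up to some $\lambda_2>1$, so the sphere can be moved past the critical radius $1$ of the limit profile; letting $k\to\infty$ in $D_k^{\lambda_2}\ge0$ then yields $W\ge W^{\lambda_2}$ on $\{|z|\ge\lambda_2\}$, which is false for $\lambda_2>1$ since $W^{\lambda_2}=W_{\lambda_2^2,0}$ dominates $W$ at infinity by Lemma \ref{lem-app}. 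You should state this limit step explicitly; neither of your two alternatives is the contradiction that actually closes the argument.
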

\begin{proof}
The proof is divided into 3 steps.

\medskip \noindent \textsc{Step 1.}
We assert that for any parameter $0 < \lambda < 1$, there exists a large number $R = R(\lambda)> 0$ such that
\begin{equation}\label{eq-W-l}
(W - W_{\lambda^2,0})(z) > 0 \quad \text{for } \lambda < |z| \le R.
\end{equation}
A direct computation with \eqref{eq-bubble} shows that $w^{\lambda}(x) = w_{\lambda^2,0}(x)$ for any $\lambda > 0$ and $x \in \mr^N$.
By \cite[Proposition 2.6]{FW} and the uniqueness of the $s$-harmonic extension, it follows that $W^{\lambda} = W_{\lambda^2,0}$ in $\mr^{N+1}_+$.
Hence \eqref{eq-Kelvin} and \eqref{eq-app} imply that
\[\begin{cases}
\text{div}(t^{1-2s} \nabla (W - W_{\lambda^2,0})) = 0 &\text{in } \mr^{N+1}_+,\\
(W - W_{\lambda^2,0})(z) = (W - W^{\lambda})(z) = 0 &\text{on } |z| = \lambda \text{ and } t > 0,\\
(W - W_{\lambda^2,0})(z) > 0 &\text{on } |z| = R \text{ and } t > 0,\\
(W - W_{\lambda^2,0})(x,0) = (w - w_{\lambda^2,0})(x) > 0  &\text{on } \lambda < |x| \le R
\end{cases}\]
for some $R > 0$ large.
Now the (classical) strong maximum principle justifies our claim \eqref{eq-W-l}.

We also notice that
\begin{equation}\label{eq-W-rel}
W(x,t) \le w(x) \le w(0) = \alpha_{N,s} \quad \text{for } (x,t) \in \mr^{N+1}_+
\end{equation}
where $\alpha_{N,s} > 0$ is given in Notations.

\medskip \noindent \textsc{Step 2.}
From the definition \eqref{eq-Kelvin} we have
\begin{equation}
V_{\epsilon}^{\lambda}(z) = \( \frac{\lambda}{|z|}\)^{N-2s} V_{\epsilon}\( \frac{\lambda^2 z}{|z|^2}\).
\end{equation}
By \eqref{eq-V_ep-W} and \eqref{eq-W-rel}, there are values $\eta_1 > 0$ small and $R_0 > 0$ large such that
\begin{equation}\label{eq-u_k-3}
V_{\ep}^{\lambda}(z) \le \(1+{\zeta_2 \over 4}\) \alpha_{N,s} |z|^{-(N-2s)} \quad \text{for any } 0 < \lambda \le 1+\eta_1 \text{ and } |x| \le R_0,
\end{equation}
provided $\ep > 0$ small enough.
Let us take $\lambda_1= 1-\eta_1$ and $\lambda_2 = 1+\eta_1$.
Thanks to estimates \eqref{eq-V_ep-W} and \eqref{eq-W-l}, it is possible to select numbers $\eta_2 > 0$ small and $R_1 > R_0$ large such that
\begin{equation}\label{eq-w_k}
\begin{aligned}
D_\ep^{\lambda_1}(z) &= V_\ep (z) - V_\ep^{\lambda_1}(z) > 0 &\text{for } \lambda_1 < |z| \le R_1,\\
\quad V_\ep^{\lambda_1}(z) &\le (1-2\eta_2)\, \alpha_{N,s}\, |z|^{-(N-2s)} &\text{for } |z| \ge R_1
\end{aligned}
\end{equation}
and
\begin{equation}\label{eq-u_k}
\int_{B^N(0,R_1)} V_\ep^{p-\ep}(x,0)\, dx \ge \(1-{\eta_2 \over 2}\) \int_{\mr^N} w^p(x)\, dx
\end{equation}
for any sufficiently small $\ep > 0$.

Furthermore, we also have
\begin{equation}\label{eq-u_k-2}
V_\ep (z) \ge (1-\eta_2)\, \alpha_{N,s}\, |z|^{-(N-2s)} \quad \text{for } R_1 \le |z| \le \delta_1M_\ep^{2 \over N-2s}
\end{equation}
if $\delta_1 > 0$ is small enough. To verify it, let us choose a function $\hat{v}_{\ep}$ which solves
\[(-\Delta)^s \hat{v}_\ep = V_{\ep}^{p-\ep}(\cdot,0) \quad \text{in } B^N \Big(0, r_0 M_\ep^{2 \over N-2s}\Big) \quad \text{and} \quad \hat{v}_\ep = 0 \quad \text{on } \pa B^N \Big(0, r_0 M_\ep^{2 \over N-2s}\Big),\]
and denote by $\widehat{V}_\ep$ its $s$-harmonic extension to the cylinder $B^N (0, r_0 M_\ep^{\frac{2}{N-2s}}) \times (0,\infty)$.
Then the comparison principle \cite[Lemma 2.1]{CKL} tells us that $V_\ep \ge \widehat{V}_\ep$.
Since $H_\mc^1(z,y)$ is bounded in $\{(z,y) \in \mr^{N+1}_+ \times \mr^N: |z|,\, |y| \le 1/2\}$, we obtain
\begin{equation}\label{eq-u_k-4}
H_\mc^1 ((x,t),y) \le {\eta_2 \over 4} \cdot
{\gamma_{N,s} \over |(x-y,t)|^{N-2s}} \quad \text{for } |(x,t)|,\, |y| \le {\delta_1 \over r_0}
\end{equation}
by making $\delta_1 \in (0, r_0)$ smaller if necessary.
Moreover, because
\[|(x-y,t)| \le \(1-{1 \over l}\)|(x,t)| \quad \text{for }|(x,t)| \ge lR_1 \text{ and } |y| \le R_1\]
given any large $l > 1$, we see from \eqref{eq-G-R}, \eqref{eq-u_k} and \eqref{eq-u_k-4} that
\begin{equation}\label{eq-wve}
\begin{aligned}
\widehat{V}_\ep(x,t) &= \int_{B^N\big(0, r_0M_{\ep}^{2 \over N-2s}\big)} V_{\ep}^{p-\ep}(y,0)\, G_{\mc}^{r_0 M_\ep^{2 \over N-2s}} ((x,t),y)\, dy
\\
&\ge \(1-{\eta_2 \over 4}\) \int_{B^N\big(0, \delta_1 M_\ep^{2 \over N-2s}\big)} V_{\ep}^{p-\ep}(y,0)\, {\gamma_{N,s} \over |(x-y,t)|^{N-2s}}dy \\
&\ge \(1-{\eta_2 \over 2}\) \(\int_{B^N(0,R_1)} V_\ep^{p-\ep}(y,0)\, dy\) {\gamma_{N,s}  \over |(x,t)|^{N-2s}}
\\
&\ge \(1-\eta_2\) \(\int_{\mr^N} w^p(y)\, dy\) {\gamma_{N,s} \over |(x,t)|^{N-2s}}\\
&= \(1-\eta_2\) {\alpha_{N,s} \over |(x,t)|^{N-2s}} \qquad \text{for } lR_1 \le |(x,t)| \le \delta_1 M_\ep^{2 \over N-2s}
\end{aligned}
\end{equation}
by choosing $l$ large enough.
If $R_1 \le |z| \le lR_1$, we have $V_\ep (z) \ge (1-\eta_2)\, \alpha_{N,s}\, |z|^{-(N-2s)}$ for $\ep > 0$ small, for $V_\ep$ converges to $W$ uniformly over a compact set.
This shows the validity of \eqref{eq-u_k-2}.

\medskip \noindent \textsc{Step 3.} Suppose that \eqref{eq-inf-bound} does not hold with $\delta_1 > 0$ chosen in the previous step. Then
\[\min_{\{z \in \mr^{N+1}_+: |z|=r_k\}} V_{\ep_k} (z) > (1+\zeta_2) W(r_k, 0)\]
for some sequences $\{\ep_k\}_{k \in \mn}$ and $\{r_k\}_{k \in \mn}$ of positive numbers such that $\ep_k \to 0$ and $r_k \in (0, \delta_1 M_{\ep_k}^{2 \over N-2s})$.
Because of \eqref{eq-V_ep-W}, it should hold that $r_k \to \infty$.
Thus Lemma \ref{lem-app} implies
\begin{equation}\label{eq-contrary}
\min_{\{z \in \mr^{N+1}_+: |z|=r_k\}} V_k (z) \ge \(1+{\zeta_2 \over 2}\) \alpha_{N,s} r_k^{-(N-2s)}
\end{equation}
where $V_k := V_{\ep_k}$.

\medskip
Now we employ the method of moving spheres to the function $D_k^{\lambda}$ (see Remark \ref{rem-moving} (2) for its definition).
For any $k \in \mn$ and $\mu \in [\lambda_1, \lambda_2]$, let
\[\Sigma^{\mu}_k = \left\{x \in \overline{\mr^{N+1}_{+}}: \mu < |z| < r_k \right\}\]
and define a number $\bar{\lambda}_k$ by
\[\bar{\lambda}_k = \sup \left\{ \lambda \in [\lambda_1, \lambda_2] : D_k^{\mu}(z) \ge 0 \text{ in } \Sigma_k^{\mu} \text{ for all } \lambda_1 \le \mu \le \lambda \right\}.\]
By \eqref{eq-w_k} and \eqref{eq-u_k-2}, we see that $\bar{\lambda}_k \ge \lambda_1$.
We shall show that $\bar{\lambda}_k = \lambda_2$ for sufficiently large $k \in \mn$.

To the contrary, assume that $\bar{\lambda}_k < \lambda_2$ for some large fixed index $k \in \mn$.
By continuity it holds that $D_k^{\bar{\lambda}_k} \geq 0$ in $\Sigma_k^{\bar{\lambda}_k}$.
Moreover, from \eqref{eq-contrary} and \eqref{eq-u_k-3}, we have $D_k^{\bar{\lambda}_k} > 0$ on $\{z \in \mr^{N+1}_+: |z| = r_k\}$,
which implies that $D_k^{\bar{\lambda}_k} \neq 0$ in $\Sigma_k^{\bar{\lambda}_k}$.
Thus it holds that $D_{k}^{\bar{\lambda}_k} > 0$ in $\Sigma_k^{\bar{\lambda}_k}$ thanks to the strong maximum principle.
Pick $\delta > 0$ small so that the maximum principle for domains with small volume \cite[Lemma 2.8]{FW} can be applied.
If we choose a compact set $K \subset \Sigma_k^{\bar{\lambda}_k}$ such that $|\Sigma_k^{\bar{\lambda}_k} \setminus K| < \delta$, then $\inf_{K} D_{k}^{\bar{\lambda}_k} >0$, and then by continuity again, for $\lambda \in (\bar{\lambda}_k, \lambda_2)$ sufficiently close to $\bar{\lambda}_k$ we have
\[K \subset \Sigma_k^{\lambda}, \quad |\Sigma_k^{\lambda} \setminus K| < \delta \quad \text{and} \quad \inf_{K} D_k^{\lambda} > 0.\]
Then we see from \cite[Lemma 2.8]{FW} that $D_{k}^{\lambda} \geq 0$, contradicting the maximality of $\bar{\lambda}_k$.
Consequently, it should hold that $\bar{\lambda} = \lambda_2$.

\medskip
Finally, taking a limit $k \to \infty$ to $D_k^{\lambda_2} \ge 0$ in $\Sigma_k^{\lambda_2}$, we get
\[W(z) \ge W^{\lambda_2} (z) \quad \text{in } |z| \ge \lambda_2.\]
However it is impossible since $\lambda_2 > 1$.
Therefore \eqref{eq-inf-bound} should be true.
\end{proof}

We now complete the proof of Proposition \ref{prop-moving}.
\begin{lem}
Let $\{V_{\ep}\}_{\ep > 0}$ be the family in the statement of Proposition \ref{prop-moving} and $\delta_1 > 0$ the number selected in the proof of the previous lemma.
Then there exist a constant $C > 0$ and small parameter $\delta_0 \in (0,\delta_1)$ such that
\[V_{\ep} (z) \le C W(z) \quad \text{for all } z \in B^{N+1}_+\Big(0, \delta_0 M_{\ep}^{\frac{2}{N-2s}}\Big)\]
provided that $\ep > 0$ is sufficiently small.
\end{lem}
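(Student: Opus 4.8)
The plan is to upgrade the pointwise lower bound on half-spheres from Lemma~\ref{lem-moser-1} into a genuine pointwise upper bound $V_\ep(z) \le CW(z)$ on a slightly smaller cylinder, again via the method of moving spheres, but now moving the center rather than just inflating around the origin. The starting point is that, by Lemma~\ref{lem-moser-1}, for every sufficiently small $\ep$ and every $r \le \delta_1 M_\ep^{2/(N-2s)}$ there is a point on the half-sphere of radius $r$ where $V_\ep$ is no larger than $(1+\zeta_2)W(r,0)$, while by \eqref{eq-V_ep-W} near the origin $V_\ep$ is already uniformly comparable to $W$. The first step is therefore to record the ``base'' estimate on a fixed half-ball $B^{N+1}_+(0,R)$: for $\ep$ small, $V_\ep \le 2W$ there, by the $C^\alpha$ convergence $V_\ep \to W$ and the boundedness of $W$ away from $0$.

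Next I would set up the moving-sphere functional. Fix a point $z_0 = (x_0,0)$ with $R \le |x_0| \le \frac{\delta_0}{2} M_\ep^{2/(N-2s)}$ (with $\delta_0 < \delta_1$ to be chosen), and for $\lambda > 0$ let $V_\ep^{z_0,\lambda}$ be the Kelvin transform of $V_\ep$ centered at $z_0$, defined exactly as in \eqref{eq-Kelvin} but with $z$ replaced by $z - z_0$. As in Remark~\ref{rem-moving}(2), the difference $D^{z_0,\lambda}_\ep := V_\ep - V_\ep^{z_0,\lambda}$ satisfies $\operatorname{div}(t^{1-2s}\nabla D^{z_0,\lambda}_\ep) = 0$ in the exterior region $\{|z - z_0| > \lambda\}$ intersected with the working cylinder, and on the boundary slice $t = 0$ one has $\pa_\nu^s D^{z_0,\lambda}_\ep \ge \xi_\ep(x) D^{z_0,\lambda}_\ep$ for $|x - x_0| \ge \lambda$, with $\xi_\ep \ge 0$ once $\ep$ is small. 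The key quantity is
\[
\bar\lambda(z_0) := \sup\left\{\lambda > 0 : D^{z_0,\mu}_\ep(z) \ge 0 \text{ for all } 0 < \mu \le \lambda \text{ and all } z \text{ with } \lambda \le |z - z_0| \le \tfrac{\delta_1}{2} M_\ep^{2/(N-2s)}\right\}.
\]
For $\lambda$ small this holds because $V_\ep$ is bounded above near $z_0$ while $V_\ep^{z_0,\lambda}$ blows up like $\lambda^{N-2s}|z-z_0|^{-(N-2s)}$ as $z \to z_0$; so $\bar\lambda(z_0) > 0$. Running the usual moving-sphere mechanism — strong maximum principle to upgrade $D^{z_0,\bar\lambda}_\ep \ge 0$ to $> 0$ in the interior, then the maximum principle for domains of small volume \cite[Lemma~2.8]{FW} near the inner sphere $|z - z_0| = \bar\lambda$, together with the lower bound on $V_\ep$ from \eqref{eq-u_k-2} and Lemma~\ref{lem-moser-1} controlling what happens on the outer sphere — shows $\bar\lambda(z_0)$ cannot be too small: concretely one gets a uniform lower bound $\bar\lambda(z_0) \ge c_0 > 0$ independent of $\ep$ and of $z_0$ in the stated range. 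I would combine this with a uniform upper bound $\bar\lambda(z_0) \le C_0$ (otherwise $D^{z_0,\lambda}_\ep \ge 0$ for arbitrarily large $\lambda$ would force an integral of $V_\ep$ to diverge, contradicting the $L^\infty$ bound in \eqref{eq-a-eq1} or the finite energy).

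The final step converts the two-sided control on $\bar\lambda(z_0)$ into the decay estimate. Evaluating $D^{z_0,\bar\lambda(z_0)}_\ep(z) \ge 0$ at a point $z$ with $|z - z_0| \sim |z_0|$ — which lies in the region where the inequality is valid as long as $|z_0| \le \frac{\delta_1}{4} M_\ep^{2/(N-2s)}$, say — gives
\[
V_\ep(z_0) \le V_\ep^{z_0, \bar\lambda(z_0)}(z_0)\text{-type comparison at a reflected point},
\]
more precisely $V_\ep(z) \le \big(\bar\lambda(z_0)/|z-z_0|\big)^{N-2s} V_\ep(z^{z_0,\bar\lambda(z_0)})$, and choosing $z$ at distance of order $|z_0|$ from $z_0$ in a direction where the reflected point sits inside the fixed half-ball $B^{N+1}_+(0,R)$ — where the base estimate $V_\ep \le 2W \le C$ applies — yields $V_\ep(z_0) \le C_0^{N-2s} c_0^{-(N-2s)} \cdot C \cdot |z_0|^{-(N-2s)} \le C' W(z_0)$, using $W(z_0) \sim |z_0|^{-(N-2s)}$ for $|z_0|$ large (this large-$|z_0|$ asymptotic of $W = W_{1,0}$ is exactly the decay estimate proved in Appendix~\ref{sec-W}). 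Handling interior points $z = (x,t)$ with $t > 0$ rather than boundary points is done the same way, reflecting about a centered half-sphere whose center is the boundary projection of a nearby point; alternatively one first gets the bound on $\Omega \times \{0\}$ and then invokes the maximum principle / Poisson-type representation to propagate it into $t > 0$. The bookkeeping that the reflected point $z^{z_0,\bar\lambda(z_0)}$ genuinely lands in the small fixed half-ball, uniformly in $\ep$, is where one must be careful: it forces the choice $\delta_0 \ll \delta_1$ and uses $c_0 \le \bar\lambda(z_0) \le C_0$ in an essential way.

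\textbf{Main obstacle.} The delicate point is the \emph{uniform} (in $\ep$ and in the center $z_0$, over a range of radii that grows like $M_\ep^{2/(N-2s)}$) lower bound $\bar\lambda(z_0) \ge c_0$. Near the origin this is just the uniform $C^\alpha$ convergence $V_\ep \to W$ plus the known fact that $W$ itself is moving-sphere-stable about the unit sphere, but for $z_0$ far out — at scale comparable to $M_\ep^{2/(N-2s)}$ — one has no limiting profile to compare with and must instead feed in the lower bound \eqref{eq-u_k-2} (itself a consequence of the comparison with $\widehat V_\ep$ built from Green's function on the large ball) to guarantee that the moving-sphere process started at $z_0$ does not stall before $\lambda$ reaches $c_0$. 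Getting these estimates to be genuinely independent of $\ep$ — so that the constants $C$ and $\delta_0$ in the conclusion do not degenerate — is the crux; everything else is the standard strong-maximum-principle / small-volume-maximum-principle machinery adapted to the degenerate operator $\operatorname{div}(t^{1-2s}\nabla\,\cdot\,)$ with its Neumann-type boundary condition, for which the required maximum principles are quoted from \cite{CS2, FW}.
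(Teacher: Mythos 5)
Your overall architecture (use Lemma \ref{lem-moser-1} as input, upgrade it to a pointwise bound) is right, but the mechanism you propose for the upgrade does not work, and it is not the paper's mechanism. The paper's proof never runs a second moving-sphere argument: it takes the single point $z_0$ on a large half-sphere where Lemma \ref{lem-moser-1} gives $V_\ep(z_0)\le(1+2\zeta_2)\alpha_{N,s}|z_0|^{-(N-2s)}$, plugs it into Green's representation on the large cylinder to conclude that $\int_{B^N(0,\delta_3 M_\ep^{2/(N-2s)})\setminus B^N(0,R_1)}V_\ep^{p-\ep}(y,0)\,dy\le C\zeta_2$, i.e.\ the energy in the far annular region is small. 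Then, for each scale $r$, the rescaled function $V_{r,\ep}(z)=r^{(N-2s)/2}V_\ep(rz)$ on the half-annulus $\{1/2\le|z|\le 2\}$ is uniformly bounded by Moser iteration (using that small energy), and the Harnack inequality of \cite[Lemma 4.9]{CS2} gives $\sup\le C\inf$ on each half-annulus; combined with the spherical \emph{minimum} bound of Lemma \ref{lem-moser-1}, this yields $V_\ep\le CW$. The Harnack step is the missing ingredient in your plan: something must convert control of the minimum on a sphere into control of the maximum, and a fixed-scale comparison cannot do it.

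The concrete failure in your final step is this. First, $D^{z_0,\lambda}_\ep\ge 0$ on $\{|z-z_0|\ge\lambda\}$ means $V_\ep(z)\ge(\lambda/|z-z_0|)^{N-2s}V_\ep(z^{z_0,\lambda})$ — a \emph{lower} bound in the exterior; equivalently, an upper bound for $V_\ep$ only at points \emph{inside} $B^{N+1}_+(z_0,\bar\lambda(z_0))$ in terms of its values at the reflected exterior points. You wrote the inequality with the opposite sign. Second, the reflected point $z^{z_0,\bar\lambda(z_0)}$ always lies within distance $\bar\lambda(z_0)\le C_0$ of $z_0$, so for $|z_0|$ of order $M_\ep^{2/(N-2s)}$ it can never land in the fixed half-ball $B^{N+1}_+(0,R)$; the choice of $z$ you describe does not exist. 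Third, even granting the two-sided bound $c_0\le\bar\lambda(z_0)\le C_0$ uniformly in $z_0$, the resulting comparison only relates values of $V_\ep$ at pairs of points within distance $O(1)$ of each other with a fixed multiplicative constant; chaining it from $z_0$ back to the origin over a distance $\sim M_\ep^{2/(N-2s)}$ produces an exponentially large constant. To make your scheme close, one would need $\bar\lambda(z_0)\gtrsim|z_0|$, which is essentially equivalent to the estimate being proven and is exactly the "crux" you flag without resolving. I would redo the second half of the argument along the paper's lines: one-point upper bound $\Rightarrow$ small annular energy $\Rightarrow$ Moser iteration and Harnack on rescaled annuli $\Rightarrow$ $\sup\le C\min\le CW$.
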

\begin{proof}
By virtue of Lemmas \ref{lem-moser-1} and \ref{lem-app}, we have a point $z_0 = (x_0, t_0) \in \mr^{N+1}_+$ such that $|z_0|=\delta_2 M_{\ep}^{\frac{2}{N-2s}}$ and
\[V_{\ep}(z_0) \leq (1 + \zeta_2) W (|z_0|,0) \le (1 + 2\zeta_2)\, \alpha_{N,s}\, |z_0|^{-(N-2s)}\]
for any small $\delta_2 \in (0, \delta_1)$.
Let $G^*_{\mc}$ be Green's function of \eqref{eq-Green} in the semi-infinite cylinder $\mc = B^N(0,\delta_1 M_{\ep}^{\frac{2}{N-2s}}) \times (0,\infty)$ (refer to Remark \ref{rem-moving} (3)).
Then we are able to choose a constant $\delta_3 \in (0, \delta_2)$ so small that
\begin{align*}
V_{\ep}(z_0) & \ge \int_{B^N(0,\delta_1 M_{\ep}^{\frac{2}{N-2s}})} V_{\ep}^{p-{\ep}}(y,0)\, G^*(z_0, y)\, dy\\
%& \ge \int_{B^N(0,\delta_2 M_{\ep}^{\frac{2}{N-2s}})} V_{\ep}^{p-{\ep}}(y,0)\, G^*(z_0, y)\, dy\\
& \ge (1 - \zeta_2)\gamma_{N,s} \int_{B^N(0,\delta_2 M_{\ep}^{\frac{2}{N-2s}})} V_{\ep}^{p-{\ep}}(y,0)\, \frac{1}{|(x_0-y,t_0)|^{N-2s}}\, dy \\
& \ge (1 - 2\zeta_2) \gamma_{n,s}\, |z_0|^{-(N-2s)} \int_{B^N(0,\delta_3 M_{\ep}^{\frac{2}{N-2s}})} V_{\ep}^{p-{\ep}}(y,0)\, dy
\end{align*}
as in \eqref{eq-wve}.
Combining the above two estimates with \eqref{eq-u_k}, we obtain
\begin{equation}\label{eq-U-har-2}
\int_{B^N(0,\delta_3 M_{\ep}^{\frac{2}{N-2s}})\setminus B^N(0,R_1)} V_{\ep}^{p-\ep} (y,0)\, dy \leq C\zeta_2.
\end{equation}
Since $V_{\ep}$ is uniformly bounded, we observe from \eqref{eq-U-har-2} that
\begin{equation}\label{eq-V-est}
\int_{B^N(0,\delta_3 M_{\ep}^{\frac{2}{N-2s}})\setminus B^N(0,R_1)} V_{\ep}^{p+1} (y,0)\, dy \leq C \zeta_2.
\end{equation}
Now let us define $V_{r,\ep}(z) = r^{\frac{N-2s}{2}} V_{\ep}(rz)$ on the half-annulus $\{z \in \mr^{N+1}_+: 1/2 \le |z| \le 2\}$ for each $2R_1 \le r \le \delta_3 M_{\ep}^{\frac{2}{N-2s}}/2$ and $\ep > 0$ small.
Then one can apply the Moser iteration method with \eqref{eq-V-est} (refer to \cite{CKL}) to deduce that it is uniformly bounded in $\{z \in \mr^{N+1}_+: 3/4 \le |z| \le 3/2\}$, $r$ and $\ep$.
As a result, the Harnack inequality \cite[Lemma 4.9]{CS2} yields
\[\sup_{\{z \in \mr^{N+1}_+: 3/4 \le |z| \le 3/2\}} V_{r,\ep}(z) \leq C \inf_{\{z \in \mr^{N+1}_+: 3/4 \le |z| \le 3/2\}} V_{r,\ep}(z)\]
where $C > 0$ is a universal constant.
This inequality with Lemma \ref{lem-moser-1} and \eqref{eq-V_ep-W} concludes the proof of the lemma (giving $\delta_0 = 3\delta_3/4$).
\end{proof}

The following assertion is an immediate consequence of Proposition \ref{prop-moving}.

\begin{cor}\label{cor-ms}
Fix any $x_0 \in \mr^N$ and small $r_0 > 0$.
Let $\{U_{\ep}\}_{\ep >0}$ be a family of positive solutions to
\[\begin{cases}
\textnormal{div}(t^{1-2s} \nabla U_{\ep})=0 &\textnormal{in } B^N(x_0,r_0) \times (0,\infty), \\
\pa_{\nu}^s U_{\ep} = U_{\ep}^{p-\ep} &\textnormal{on } B^N(x_0,r_0),\\
\|U_{\ep}\|_{L^{\infty}(B^{N+1}_+((x_0,0),r_0))} \leq c M_{\ep}^{\frac{N-2s}{2}}
\end{cases}\]
for a certain constant $c > 0$ independent of $\ep$
and a family of positive values $\{M_{\ep}\}_{\ep >0}$ such that $\lim_{\ep \to \infty} M_{\ep} = \infty$ and $\lim_{\ep \to \infty} M_{\ep}^{\ep}=1$.
Suppose that the rescaled function $M_{\ep}^{-\frac{N-2s}{2}} U_{\ep}(M_{\ep}^{-1}\cdot + (x_0,0))$ converges weakly to the function $W$ in $\drn$. Then we have
\[U_{\ep}(z) \leq C M_{\ep}^{\frac{N-2s}{2}} W(M_{\ep}(z-(x_0,0))) \quad \text{for all } z \in B^{N+1}_+((x_0,0),\delta_0)\]
for some $\delta_0 \in (0,r_0)$ and $C > 0$ independent of $\ep$.
\end{cor}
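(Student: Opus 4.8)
The plan is to read off the estimate from Proposition \ref{prop-moving} after a change of variables that turns the fixed half-ball $B^{N+1}_+((x_0,0),r_0)$ into a half-ball of radius of order $M_\ep$. Write $V_\ep(z) := M_\ep^{-\frac{N-2s}{2}} U_\ep\big(M_\ep^{-1} z + (x_0,0)\big)$ for the rescaled profile; by hypothesis it is defined on $B^N(0, r_0 M_\ep)\times(0,\infty)$ and converges weakly to $W$ in $\drn$. Since $\text{div}(t^{1-2s}\nabla\,\cdot\,)$ is invariant under dilations of $\mr^{N+1}_+$, under translations in the $x$-variable, and under multiplication by constants, $V_\ep$ solves $\text{div}(t^{1-2s}\nabla V_\ep)=0$ in $B^N(0, r_0 M_\ep)\times(0,\infty)$; and a direct computation using $p - 1 = \frac{4s}{N-2s}$ gives the boundary relation
\[
\pa_\nu^s V_\ep = M_\ep^{-\frac{(N-2s)\ep}{2}}\, V_\ep^{p-\ep} \qquad \text{on } B^N(0, r_0 M_\ep).
\]
To recover the exact form of \eqref{eq-a-eq1} I would absorb the prefactor: put $\theta_\ep := M_\ep^{-\frac{(N-2s)\ep}{2(p-1-\ep)}}$ and $\widetilde V_\ep := \theta_\ep V_\ep$. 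Because $\frac{N-2s}{2(p-1-\ep)}$ stays bounded as $\ep \to 0$ and $M_\ep^\ep \to 1$, we have $\theta_\ep \to 1$; hence $\widetilde V_\ep$ still converges weakly to $W$ in $\drn$, and now $\pa_\nu^s \widetilde V_\ep = \widetilde V_\ep^{p-\ep}$ on $B^N(0,r_0M_\ep)$.

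Next I would verify the remaining hypotheses of Proposition \ref{prop-moving} with the number "$M_\ep$" there replaced by $\widetilde M_\ep := M_\ep^{\frac{N-2s}{2}}$; this is the natural choice since $\widetilde M_\ep^{\frac{2}{N-2s}} = M_\ep$ matches both the domain $B^N(0, r_0 M_\ep)\times(0,\infty)$ of $\widetilde V_\ep$ and the half-ball appearing in the desired conclusion. One has $\widetilde M_\ep \to \infty$ and $\widetilde M_\ep^{\ep} = (M_\ep^\ep)^{\frac{N-2s}{2}} \to 1$. For the uniform $L^\infty$ bound, the assumption $\|U_\ep\|_{L^\infty(B^{N+1}_+((x_0,0),r_0))} \le c M_\ep^{\frac{N-2s}{2}}$ gives $\|V_\ep\|_{L^\infty(B^{N+1}_+(0, r_0 M_\ep))} \le c$, hence $\|\widetilde V_\ep\|_{L^\infty(B^{N+1}_+(0, r_0\widetilde M_\ep^{2/(N-2s)}))} \le \theta_\ep\, c \le 2c$ for all small $\ep$; the weak convergence $\widetilde V_\ep \rightharpoonup W$ was noted above.

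Proposition \ref{prop-moving} then yields constants $C > 0$ and $\delta_0 \in (0, r_0)$, independent of $\ep$, with $\widetilde V_\ep(\zeta) \le C W(\zeta)$ for every $\zeta \in B^{N+1}_+\big(0, \delta_0\widetilde M_\ep^{2/(N-2s)}\big) = B^{N+1}_+(0, \delta_0 M_\ep)$. Undoing the scaling via $\zeta = M_\ep(z - (x_0,0))$ — so that $\zeta$ ranges over $B^{N+1}_+(0,\delta_0 M_\ep)$ precisely when $z$ ranges over $B^{N+1}_+((x_0,0),\delta_0)$ — and using $\widetilde V_\ep(\zeta) = \theta_\ep M_\ep^{-\frac{N-2s}{2}} U_\ep(z)$, one obtains
\[
U_\ep(z) \le C\,\theta_\ep^{-1}\, M_\ep^{\frac{N-2s}{2}}\, W\big(M_\ep(z - (x_0,0))\big) \le 2C\, M_\ep^{\frac{N-2s}{2}}\, W\big(M_\ep(z - (x_0,0))\big)
\]
for all $z \in B^{N+1}_+((x_0,0),\delta_0)$ and all small $\ep$ (using $\theta_\ep^{-1} \to 1$), which is exactly the assertion with constant $2C$ and the same $\delta_0$. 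In this corollary there is no genuine analytic obstacle: all the substance is already in Proposition \ref{prop-moving}, and the only point requiring care is the slightly-subcritical bookkeeping, namely that the amplitude $M_\ep^{-(N-2s)/2}$ used to define the blow-up profile is not quite the one making the equation $\pa_\nu^s V = V^{p-\ep}$ scale-invariant — a discrepancy that is harmless precisely because $M_\ep^\ep \to 1$.
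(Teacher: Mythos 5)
Your proposal is correct and follows exactly the route the paper intends: the paper offers no separate proof, stating only that the corollary is an immediate consequence of Proposition \ref{prop-moving}, and your rescaling $V_\ep(z)=M_\ep^{-\frac{N-2s}{2}}U_\ep(M_\ep^{-1}z+(x_0,0))$ together with the substitution $\widetilde M_\ep=M_\ep^{\frac{N-2s}{2}}$ is precisely the change of variables that matches the proposition's hypotheses. Your extra care with the prefactor $M_\ep^{-\frac{(N-2s)\ep}{2}}$, absorbed via $\theta_\ep\to 1$, is a correct and welcome piece of bookkeeping that the paper leaves implicit.
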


\section{Application of the Pointwise Upper Estimate} \label{sec-point}
In this section, we gather refined information on finite energy solutions $U_{\ep}$ to equation \eqref{eq-ext}.
More precisely, we first show that $V_0$ vanishes identically if $m \neq 0$ in \eqref{eq-cc}.
Then we prove that any two different blow-up points do not collide and blow-up rates of each bubbles are compatible to the others.
Finally, we get sharp pointwise upper bounds of $U_{\ep}$ over the whole cylinder $\mc$,
and deduce that a suitable $L^{\infty}$-normalization of $U_{\ep}$ converges to a certain function as $\ep \searrow 0$, which can be described as a combination of Green's function.

\medskip
Recall from \eqref{eq-cc}, \eqref{eq-lam-order} and \eqref{eq-x_i} that
\begin{equation}\label{eq-cc-2}
U_n = V_0 + \sum_{i=1}^{m} PW_{\lambda_n^i, x_n^i} + R_n \quad \text{in } \mc'
\end{equation}
and $x_0^i = \lim_{n \to \infty} x_n^i \in \Omega$ for each $i = 1, \cdots, m$.
We also remind with \eqref{eq-cc-cond} that the concentration rate $\lambda_n^i$ on each blow-up part tends to 0 as $n \to \infty$.
The next lemma ensures that this convergence is not too fast.
\begin{lem}\label{lem-apriori}
Let $\{U_n\}_{n \in \mn}$ be a sequence of solutions to \eqref{eq-ext} with $\ep = \ep_n \searrow 0$, which admits a decomposition of the form \eqref{eq-cc-2}.
Then we have $\lim_{n \to \infty} (\lambda_n^i)^{\ep_n} = 1$ for each $1 \leq i \leq m$.
\end{lem}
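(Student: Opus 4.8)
The plan is to derive the claim $\lim_{n\to\infty}(\lambda_n^i)^{\ep_n}=1$ purely from the uniform energy bound $\sup_n\|U_n\|_{\hc}<\infty$ together with the lower bound on the $L^{p+1-\ep_n}$-norm that each bubble contributes. Recall that $(\lambda_n^i)^{\ep_n}\to 1$ is equivalent to $\ep_n\log\lambda_n^i\to 0$, and since $\lambda_n^i\to 0$ we always have $\limsup_n(\lambda_n^i)^{\ep_n}\le 1$; the only thing to rule out is that $\ep_n\log\lambda_n^i\to-\infty$ along a subsequence, i.e.\ that $\lambda_n^i$ decays super-exponentially fast in $1/\ep_n$. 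By the ordering \eqref{eq-lam-order} it suffices to treat $i=1$, the smallest rate, because $(\lambda_n^1)^{\ep_n}\le(\lambda_n^i)^{\ep_n}\le 1$ forces all the others to $1$ as soon as the smallest one does.

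First I would extract the scaling information carried by a single bubble. Using Corollary~\ref{cor-ms} (with $M_\ep=(\lambda_n^1)^{-1}$, whose validity we must check via Lemma~\ref{lem-apriori} itself in an inductive fashion, or more safely by using only the concentration-compactness decomposition directly), or more elementarily using only \eqref{eq-cc-2}, \eqref{eq-pw-exp} and the definition \eqref{eq-bubble} of the bubble, one computes the contribution of $PW_{\lambda_n^1,x_n^1}$ to the nonlinear term. Concretely, the extension identity $\|U_n\|_{\hc}^2=\int_\Omega u_n^{p+1-\ep_n}\,dx$ from \eqref{eq-U_ep}, combined with the asymptotic orthogonality of the bubbles guaranteed by \eqref{eq-cc-cond} and the $\hc$-smallness of $R_n$, gives
\[
\int_\Omega u_n^{p+1-\ep_n}\,dx \;\ge\; \int_\Omega \big(Pw_{\lambda_n^1,x_n^1}\big)^{p+1-\ep_n}\,dx \;-\; o(1).
\]
Now rescale: writing $Pw_{\lambda_n^1,x_n^1}(x)=(\lambda_n^1)^{-\frac{N-2s}{2}}\,\widetilde{v}_n\big((x-x_n^1)/\lambda_n^1\big)$ with $\widetilde v_n\to w$ in $L^{p+1}_{loc}$, a change of variables produces a factor $(\lambda_n^1)^{N-\frac{N-2s}{2}(p+1-\ep_n)}=(\lambda_n^1)^{\frac{N-2s}{2}\ep_n}$ in front of $\int w^{p+1}\,dx + o(1)$. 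Hence
\[
\int_\Omega u_n^{p+1-\ep_n}\,dx \;\ge\; (\lambda_n^1)^{\frac{N-2s}{2}\ep_n}\Big(\int_{\mr^N}w^{p+1}\,dx - o(1)\Big).
\]

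The left-hand side is bounded above by a constant, by \eqref{eq-U_ep} and the energy hypothesis. Therefore $(\lambda_n^1)^{\frac{N-2s}{2}\ep_n}$ stays bounded above—which we knew—but the point is the reverse inequality: I would instead bound the left-hand side \emph{below} by a positive constant. Indeed $\|U_n\|_{\hc}^2$ is bounded below by the Sobolev trace energy of a single bubble minus $o(1)$, hence $\int_\Omega u_n^{p+1-\ep_n}\,dx\ge c_0>0$ for all large $n$; and also $\int_\Omega u_n^{p+1-\ep_n}\,dx\le \|U_n\|_{\hc}^2\le C$. Plugging these two-sided bounds into the displayed inequality above forces $c_0 - o(1)\le (\lambda_n^1)^{\frac{N-2s}{2}\ep_n}\big(\int w^{p+1} - o(1)\big)\le$ something, but crucially, if $(\lambda_n^1)^{\ep_n}\to 0$ along a subsequence then the right side would have to be matched by other terms; one then re-examines the full expansion \eqref{eq-cc-2} and observes that \emph{every} term $Pw_{\lambda_n^i,x_n^i}$ carries a factor $(\lambda_n^i)^{\frac{N-2s}{2}\ep_n}\le(\lambda_n^1)^{\frac{N-2s}{2}\ep_n}$ after rescaling (using the ordering and $\lambda_n^i/\lambda_n^1\le C_0$, which at this stage of the paper we may not yet have—so one argues directly that each rescaled profile is bounded), while $V_0$ contributes a fixed finite amount. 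If $(\lambda_n^1)^{\ep_n}\to 0$, the whole bubble part of $\int u_n^{p+1-\ep_n}$ tends to zero, so $u_n\to v_0$ strongly in $L^{p+1}$; but then the Sobolev embedding and the equation force $R_n+\sum PW\to 0$ in $\hc$, contradicting that the bubbles carry energy bounded below (each $\|PW_{\lambda_n^i,x_n^i}\|_{\hc}^2\to\|W\|_{\drn}^2>0$). This contradiction yields $\liminf_n(\lambda_n^1)^{\ep_n}>0$, hence $=1$, and the lemma follows.

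The main obstacle I anticipate is making the rescaling of the \emph{nonlinear} exponent $p+1-\ep_n$ rigorous: one must control the error between $\big(Pw_{\lambda_n^i,x_n^i}\big)^{p+1-\ep_n}$ and its rescaled bubble model uniformly, including the slowly-varying factor $(\lambda_n^i)^{\frac{N-2s}{2}\ep_n}$ which is neither $1$ nor bounded away from it a priori, and to handle the cross terms between distinct bubbles and between the bubbles and $V_0$ using only \eqref{eq-cc-cond} and \eqref{eq-pw-exp}—without invoking the no-collision statement or rate-comparability, since those are proved later in the paper. A clean way around this is to work with the squared $\hc$-norm (which is exactly $\int u_n^{p+1-\ep_n}$) and use that the $\hc$-decomposition \eqref{eq-cc} is \emph{orthogonal up to $o(1)$}, so that $\|U_n\|_{\hc}^2=\|V_0\|_{\hc}^2+\sum_{i=1}^m\|PW_{\lambda_n^i,x_n^i}\|_{\hc}^2+o(1)=\|V_0\|_{\hc}^2+m\,\kappa_s^{-1}\mathcal{S}_{N,s}^{-(N)/s}\cdots+o(1)$, a fixed finite quantity, while simultaneously $\int_\Omega u_n^{p+1-\ep_n}\,dx\le C\,\max_i(\lambda_n^i)^{-\frac{N-2s}{2}\ep_n}\cdot o(1) + \text{(fixed)}$ would be violated unless the $\lambda_n^i$ are not super-exponentially small—this is the quantitative heart of the argument and where the bulk of the estimation work lies.
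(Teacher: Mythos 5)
There is a genuine gap, and it appears already in your framing of the goal. You assert that since $\limsup_n(\lambda_n^i)^{\ep_n}\le 1$ automatically, ``the only thing to rule out is that $\ep_n\log\lambda_n^i\to-\infty$.'' That is not enough: one must also exclude $\ep_n\log\lambda_n^i\to c$ for every $c<0$ (e.g.\ $\lambda_n^i\sim e^{-1/\ep_n}$ gives $(\lambda_n^i)^{\ep_n}\to e^{-1}$ without any super-exponential decay). Your contradiction argument is calibrated only to the case $(\lambda_n^1)^{\ep_n}\to 0$ (where the bubble part of $\int u_n^{p+1-\ep_n}$ would vanish), so at best it yields $\liminf_n(\lambda_n^1)^{\ep_n}>0$; the final step ``hence $=1$'' is a non sequitur. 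If $(\lambda_n^1)^{\ep_n}\to 1/2$, the bubble contribution to $\int u_n^{p+1-\ep_n}$ stays bounded away from zero and nothing in your argument produces a contradiction. Boundedness of $\int u_n^{p+1-\ep_n}$ above and below only pins $\ep_n\log\lambda_n^1$ in a compact interval, which is strictly weaker than the lemma.

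Your route could in principle be completed: the energy identity $\kappa_s\|U_n\|_{\hc}^2=\int_\Omega u_n^{p+1-\ep_n}$ together with the asymptotic orthogonality of the decomposition gives, after controlling all cross terms, $\sum_{i=1}^m(\lambda_n^i)^{\frac{N-2s}{2}\ep_n}\to m$ with each summand $\le 1+o(1)$, forcing each factor to $1$. But that exact matching (not just two-sided bounds) is precisely the ``bulk of the estimation work'' you defer, so the proposal as written does not prove the statement. The paper avoids this entirely by testing the equation against a \emph{single} projected bubble: multiplying \eqref{eq-ext} by $PW_{\lambda_n^i,x_n^i}$ and integrating by parts yields $\int_\Omega u_n^{p-\ep_n}Pw_{\lambda_n^i,x_n^i}\,dx=\int_\Omega u_n\,w_{\lambda_n^i,x_n^i}^p\,dx$. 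The right-hand side involves the exponent $p$ exactly and is therefore scale-invariant (it converges to $\int_{\mr^N}w^{p+1}$ with no $\lambda^{\ep_n}$ factor), while the left-hand side equals $(\lambda_n^i)^{\mp\frac{N-2s}{2}\ep_n}\big(\int_{\mr^N}w^{p+1}+o(1)\big)$; equating the two pins $(\lambda_n^i)^{\ep_n}$ to exactly $1$, bubble by bubble, without needing orthogonal expansions of the full $L^{p+1-\ep_n}$ norm. You should adopt this (or carry out the exact-matching version of your argument in full) to close the gap.
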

\begin{proof}
Fix any $i \in \{1, \cdots, m\}$. Multiplying \eqref{eq-ext} by $PW_{\lambda_n^i, x_n^i}$, integrating by parts and using \eqref{wlyxt}, we get the equality
\begin{equation}\label{eq-ap-1}
\int_{\Omega} u_n^{p-\ep_n} Pw_{\lambda_n^i, x_n^i} dx = \kappa_s \int_{\mc} t^{1-2s} \nabla  U_n \cdot \nabla PW_{\lambda_n^i, x_n^i} dxdt = \int_{\Omega} u_n w_{\lambda_n^i, x_n^i}^p dx.
\end{equation}
Let us estimate the leftmost and rightmost sides of \eqref{eq-ap-1}.
By making use of \eqref{eq-cc-2}, \eqref{eq-cc-cond}, the mean value theorem, and the fact that $v_0$ is bounded on $\Omega \times \{0\}$ and $\lim_{n \to \infty} \|R_n\|_{\hc} = 0$, we obtain
\begin{align*}
&\ \int_{\Omega} \left| \( u_n^{p-\ep_n} - (Pw_{\lambda_n^i, x_n^i})^{p-\ep_n} \) Pw_{\lambda_n^i, x_n^i}\right| dx \\
%&= \int_{\Omega} \left| \(\sum_{j=1}^{m} Pw_{\lambda_n^j,x_n^j} + V_0 +r_n\)^{p-\ep_n} - Pw_{\lambda_n^i,x_n^i}^{p-\ep_n}\right| Pw_{\lambda_n^i, x_n^i} dx\\
&\leq C \int_{\Omega} \left| \sum_{j \neq i} Pw_{\lambda_n^j,x_n^j} + v_0 + r_n \right| \( \sum_{j=1}^m (Pw_{\lambda_n^j,x_n^j})^{p-1-\ep_n} + |v_0|^{p-1-\ep_n} + |r_n|^{p-1-\ep_n}\) Pw_{\lambda_n^i, x_n^i} dx = o(1).
\end{align*}
Hence it holds
\begin{equation}\label{eq-ap-7}
\int_{\Omega} u_n^{p-\ep_n} Pw_{\lambda_n^i, x_n^i} dx = \int_{\Omega}( Pw_{\lambda_n^i, x_n^i})^{p+1-\ep_n} dx + o(1).
\end{equation}
Moreover, it is easy to check that
\begin{equation}
\begin{aligned}\label{eq-ap-3}
\int_{\Omega} (Pw_{\lambda_n^i, x_n^i})^{p+1-\ep_n} dx
&= (\lambda_n^i)^{-\(\frac{N-2s}{2}\) \ep_n} \int_{\lambda_n^i (\Omega-x_n^i)} (Pw_{1,0})^{p+1-\ep_n} dx \\
&= (\lambda_n^i)^{-\(\frac{N-2s}{2}\) \ep_n} \( \int_{\mr^N} w^{p+1} dx + o(1)\).
\end{aligned}
\end{equation}
Similarly, one may show that
\begin{equation}\label{eq-ap-4}
\int_{\Omega} u_n w_{\lambda_n^i, x_n^i}^p dx = \int_{\mr^N} w^{p+1} dx + o(1).
\end{equation}
Inserting \eqref{eq-ap-7}, \eqref{eq-ap-3} and \eqref{eq-ap-4} into \eqref{eq-ap-1}, we conclude that $\lim_{n \to \infty} ( \lambda_n^i)^{\ep_n} =1$. The lemma is proved.
\end{proof}

In the following, we give the proof of several claims stated in the beginning of this section, applying the previous lemma.
\begin{lem}\label{lem-alt}
Let $\{U_n\}_{n \in \mn}$ be a sequence of solutions of \eqref{eq-ext} with $\ep = \ep_n$ which admits an asymptotic behavior \eqref{eq-cc-2}.
Suppose that there exists at least one bubble in \eqref{eq-cc-2}, i.e., $m \ne 0$.
Then $V_0 \equiv 0$.
\end{lem}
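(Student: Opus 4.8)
The plan is to show that if a nontrivial bubble is present in the decomposition \eqref{eq-cc-2}, then the weak limit $V_0$ must vanish identically. The strategy is a Pohozaev-type / energy comparison argument combined with the pointwise upper bound from Corollary \ref{cor-ms}. First I would recall that $V_0$ solves the limiting equation \eqref{eq-V-eq} on the cylinder $\mc$, hence $v_0 = \text{tr}|_{\Omega \times \{0\}} V_0$ is a nonnegative solution of $(-\Delta)^s v_0 = v_0^p$ in $\Omega$ with $v_0 = 0$ on $\Sigma$. By the strong maximum principle, either $v_0 \equiv 0$ or $v_0 > 0$ in $\Omega$. So the task reduces to excluding the latter alternative.

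Suppose, for contradiction, that $v_0 > 0$ in $\Omega$. The idea is to test equation \eqref{eq-ext} against a suitable combination of $U_n$ and the projected bubble, and extract an asymptotic relation that becomes impossible when $v_0$ is strictly positive. Concretely, take the smallest concentration rate $\lambda_n^1$ (using the ordering \eqref{eq-lam-order}) and consider the blow-up point $x_0^1 \in \Omega$. Since $v_0(x_0^1) > 0$ by assumption, near $x_0^1$ we have $u_n \geq V_0 \sim v_0(x_0^1) > 0$ pointwise on $\Omega \times \{0\}$ away from the bubble core, while simultaneously the pointwise bound $U_n(z) \le C (\lambda_n^1)^{-\frac{N-2s}{2}} W((\lambda_n^1)^{-1}(z - (x_n^1,0)))$ from Corollary \ref{cor-ms} forces $U_n$ to decay like $|z - (x_n^1,0)|^{-(N-2s)}$ at scale $\lambda_n^1$. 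I would then exploit the equality \eqref{eq-ap-1} from Lemma \ref{lem-apriori}, namely
\[
\int_\Omega u_n^{p-\ep_n} Pw_{\lambda_n^1, x_n^1}\, dx = \int_\Omega u_n w_{\lambda_n^1, x_n^1}^p\, dx.
\]
The point is that the right-hand side picks up a contribution $\int_\Omega v_0 \, w_{\lambda_n^1, x_n^1}^p\, dx$, which after rescaling by $\lambda_n^1$ behaves like $(\lambda_n^1)^{\frac{N-2s}{2}} v_0(x_0^1) \int_{\mr^N} w^p\, dx \to 0$ but is strictly positive and of a definite lower order, whereas the leading bubble-bubble interaction on both sides cancels. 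Matching the next-order terms and invoking \eqref{eq-pw-exp} together with the expansion of $Pw_{\lambda_n^1, x_n^1}$ yields a sign contradiction: the presence of a positive $v_0$ would add strictly positive mass to one side that cannot be matched on the other, because $Pw_{\lambda_n^1, x_n^1} \le W_{\lambda_n^1, x_n^1} \le w_{\lambda_n^1, x_n^1}$ pointwise.

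The cleanest implementation, and the one I would pursue, is to integrate the equation directly: multiply \eqref{eq-ext} by $V_0$ (or rather its extension) and integrate by parts over $\mc$. On one hand $\kappa_s \int_\mc t^{1-2s} \nabla U_n \cdot \nabla V_0 = \int_\Omega u_n^{p-\ep_n} v_0\, dx$; on the other, by weak convergence $U_n \rightharpoonup V_0$ this inner product tends to $\kappa_s \int_\mc t^{1-2s} |\nabla V_0|^2 = \int_\Omega v_0^{p+1}\, dx$. Hence $\int_\Omega u_n^{p-\ep_n} v_0\, dx \to \int_\Omega v_0^{p+1}\, dx$. Then, using the decomposition $u_n = v_0 + \sum_i Pw_{\lambda_n^i, x_n^i} + r_n$, the bubbles' contribution to $\int_\Omega u_n^{p-\ep_n} v_0\, dx$ is estimated: each $\int_\Omega (Pw_{\lambda_n^i, x_n^i})^{p-\ep_n} v_0\, dx = O((\lambda_n^i)^{\frac{N-2s}{2}}) \to 0$ since $v_0$ is bounded and $w^{p}$ is integrable with the right scaling, and the cross terms vanish by \eqref{eq-cc-cond} together with Hölder. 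But then a more delicate expansion, using the pointwise bound to control $u_n^{p-\ep_n} - v_0^{p-\ep_n}$ near each $x_n^i$, must reveal that the bubbles actually contribute a positive amount bounded away from zero to a related integral identity — this is where the contradiction with $v_0 \not\equiv 0$ surfaces. The hard part will be making the bubble contributions' lower bound rigorous: one must carefully separate the region near each concentration point (where the pointwise upper bound of Corollary \ref{cor-ms} gives sharp control and the bubble dominates) from the region away from all $x_n^i$ (where $u_n \to v_0$ locally uniformly by elliptic regularity), and show these two regimes cannot be reconciled unless $v_0 \equiv 0$. I expect the interaction estimates near the smallest bubble $\lambda_n^1$, combined with the expansion \eqref{eq-pw-exp}, to be the technical crux.
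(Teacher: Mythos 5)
There is a genuine gap: the integral-identity route you settle on does not actually produce a contradiction. The identity $\int_\Omega u_n^{p-\ep_n} v_0\,dx \to \int_\Omega v_0^{p+1}\,dx$ obtained by testing against $V_0$ is perfectly consistent with the decomposition \eqref{eq-cc-2} regardless of whether $v_0$ vanishes: each bubble contributes $O((\lambda_n^i)^{\frac{N-2s}{2}}) = o(1)$ to that integral, the cross terms vanish, and away from the concentration points $u_n \to v_0$, so both sides match at leading order with no tension. The same is true of \eqref{eq-ap-1}: the $v_0$-contributions to both sides are $O((\lambda_n^1)^{\frac{N-2s}{2}})$ and cancel consistently. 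You acknowledge that ``a more delicate expansion \dots must reveal'' the contradiction, but that expansion is never specified, and it is precisely the missing content of the proof. An energy-matching argument at the orders you compute cannot distinguish $v_0 \equiv 0$ from $v_0 > 0$.

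The irony is that you already state the two facts that close the argument, and then walk away from them. The paper's proof is pointwise, not integral: one first shows the global bound $U_n \le C(\lambda_n^1)^{-\frac{N-2s}{2}}$ (via rescaling and Moser iteration), then upgrades it through Corollary \ref{cor-ms} and Lemma \ref{lem-apriori} to $U_n(z) \le C(\lambda_n^1)^{-\frac{N-2s}{2}} W\bigl((z-(x_n^1,0))/\lambda_n^1\bigr)$ on a fixed half-ball around $(x_n^1,0)$. Since $W$ decays like $|z|^{-(N-2s)}$, this forces $U_n(z) \to 0$ for every \emph{fixed} $z \ne (x_0^1,0)$ in that half-ball. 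On the other hand, $U_n \ge V_0 + R_n$ on $\Omega \times \{0\}$ because the projected bubbles are nonnegative; choosing a point $x'$ near $x_0^1$ where $R_n(x',0) \to 0$ along a subsequence (possible since $R_n(\cdot,0)\to 0$ in $L^{2N/(N-2s)}$) gives $v_0(x') = 0$, and the strong maximum principle finishes. Your sentence ``$u_n \ge V_0 \sim v_0(x_0^1)>0$ \dots while simultaneously the pointwise bound forces $U_n$ to decay'' is exactly this contradiction; you need to extract it directly (taking care that the lower bound involves $R_n$, not just $V_0$) rather than feed it into an energy identity where it washes out.
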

\begin{proof}
Firstly, we aim to show that
\begin{equation}\label{eq-peak-0}
U_n(z) \le C (\lambda_n^1)^{-{N-2s \over 2}} \quad \text{uniformly for any } z \in \mc \text{ and } n \in \mn.
\end{equation}
To do so, we consider the function $\wtu_n (z) := (\lambda_n^1)^{\frac{N-2s}{2}} U_n (\lambda_n^1 z)$ defined in $\mc_n: = (\lambda_n^1)^{-1} \mc$.
One can easily observe that it satisfies
\[\begin{cases}
\text{div}(t^{1-2s} \nabla \wtu_n) = 0 &\text{in } \mc_n, \\
\wtu_n = 0 &\text{on } \pa_L \mc_n,
\\
\pa_{\nu}^s \wtu_n = (\lambda_n^1)^{\frac{(N-2s)\ep}{2}} \wtu_n^{p-\ep} &\text{on } \Omega_n \times \{0\}
\end{cases}\]
where $\Omega_n := (\lambda_n^1)^{-1} \Omega$. Also it is plain to check
\begin{equation}\label{eq-peak-1}
\sup_{n \in \mn} \int_{\mathcal{C}_n} t^{1-2s}|\nabla \wtu_n (x,t)|^2 dx dt < C\quad\textrm{and}\quad \sup_{n \in \mn}\int_{\Omega_n} |\wtu_n (x,0)|^{\frac{2N}{N-2s}} dx < C.
\end{equation}
Owing to H\"older's inequality, it holds that
\[\sup_{n \in \mn} \int_{B^N (y,r_0) \cap \Omega_n} |\wtu_n (x,0)|^2 dx < C\]
for any $y \in \Omega_n$ and a small value $r_0 > 0$ to be fixed soon.
Combining this with the first estimate of \eqref{eq-peak-1} yields
\begin{equation}\label{eq-peak-2}
\sup_{n \in \mn} \int_{B^{N+1}_+((y,0),r_0) \cap \mc_n} t^{1-2s}|\wtu_n (x,t)|^2 dx dt < C
\end{equation}
(see the proof of \cite[Lemma 3.1]{CK}).
Let $\delta > 0$ be the number in Lemma \ref{lem-high-2}.
Then, from \eqref{eq-lam-order}, \eqref{eq-cc-2} and the fact that \[\lim_{n \to \infty} \int_{\Omega_n} \left|(\lambda_n^1)^{\frac{N-2s}{2}} R_n (\lambda_n^1 x,0)\right|^{\frac{2N}{N-2s}} dx = 0,\]
it is possible to choose $r_0 >0$ small enough so that
\[\sup_{n \in \mn} \int_{B^N (y,r_0) \cap \Omega_n} |\wtu_n (x,0)|^{\frac{2N}{N-2s}} dx < \delta.\]
Therefore, by invoking Lemma \ref{lem-high-2} with $a= (\lambda_n^1)^{\frac{(N-2s)\ep}{2}} \wtu_n^{p-1-\ep}$ and $f=0$, we may conclude that
\[\sup_{n \in \mn} \|\wtu_n\|_{L^{\infty}(B^{N}(y,{r_0}/{2}) \cap \Omega_n)} \leq C \sup_{n \in \mn} \int_{B^{N+1}_+((y,0),r_0) \cap \mc_n} t^{1-2s} |\wtu_n (x,t)|^2 dxdt \leq C\]
where the last inequality is due to \eqref{eq-peak-2}. Since $y \in \Omega_n$ is chosen arbitrarily and $\wtu_n$ attains its maximum on $\Omega_n \times \{0\}$, it follows
\[\sup_{n \in \mn} \sup_{(x,t) \in \mc_n} \wtu_n (x,t) = \sup_{n \in \mn} \sup_{x \in \Omega_n} \wtu_n (x,0) \leq C.\]
This proves \eqref{eq-peak-0}.

Now, by virtue of \eqref{eq-peak-0}, Corollary \ref{cor-ms} and Lemma \ref{lem-apriori}, we obtain \begin{equation}\label{eq-peak-split-1}
U_n(z) \le C(\lambda_n^1)^{-\frac{N-2s}{2}} W\({z-(x_n^1,0) \over \lambda_n^1}\) \quad \text{for all } z \in B^{N+1}_+((x_n^1,0), \delta_0),
\end{equation}
which implies
\[\lim_{n \to \infty} U_n(z) = 0 \quad \text{for any } z \in B^{N+1}_+((x_0^1,0), \delta_0/2) \setminus \{(x_0^1,0)\}.\]
Since $R_n(\cdot, 0) \to 0$ in $L^{2N \over N-2s}(\Omega)$, there exists a point $x' \in B^N (x_0^1, \delta_0/2) \setminus \{x_0^1, \cdots, x_0^m\}$ such that $\lim_{n \to \infty} R_n(x',0) = 0$.
Furthermore, we know from \eqref{eq-cc-2} that $U_n(x,0) \ge V_0(x,0) + R_n(x,0)$ for all $x \in \Omega$, so it should hold that $V_0 (x',0) = 0$.

On the other hand, each $U_n$ and its weak limit $V_0$ are nonnegative in $\mc$.
Therefore one concludes from the strong maximum principle that $V_0 \equiv 0$.
\end{proof}

In Lemmas \ref{lem-peak-split}-\ref{lem-u-asym}, we are mainly interested on the case $m \ne 0$.
In this case, solutions $U_n$ to \eqref{eq-ext} with the asymptotic behavior \eqref{eq-cc-2} can be rewritten in the form
\begin{equation}\label{eq-refined}
U_n = \sum_{i=1}^{m} PW_{\lambda_n^i, x_n^i} + R_n \quad \text{in } \mc'
\end{equation}
where $\lim_{n \to \infty} \|R_n\|_{\hc} = 0$.

\begin{lem}\label{lem-peak-split}
Assume that a sequence $\{U_n\}_{n \in \mn}$ of solutions to \eqref{eq-ext} with $\ep = \ep_n$ has the asymptotic behavior given by Lemma \ref{lem-cc-bounded} with $m \ge 1$.
Then there exists a constant $d_0 > 0$ such that
\begin{equation}\label{eq-peak-split}
|x_0^i - x_0^j| \ge d_0 \quad \text{for any } 1 \le i < j \le m.
\end{equation}
\end{lem}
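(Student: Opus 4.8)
The plan is to argue by contradiction: suppose that along a subsequence two blow-up centers collide, say $|x_n^i - x_n^j| \to 0$ with $i < j$, so that $x_0^i = x_0^j =: x_0$. By the ordering convention \eqref{eq-lam-order} we may assume $i = 1$ and that $\lambda_n^1$ is the smallest among all the rates of bubbles concentrating at $x_0$, and in particular $\lambda_n^1 \le \lambda_n^j$. The key quantitative input is the moving-sphere pointwise bound: arguing exactly as in the proof of Lemma \ref{lem-alt}, we first establish the uniform estimate \eqref{eq-peak-0}, namely $U_n(z) \le C(\lambda_n^1)^{-(N-2s)/2}$ on all of $\mc$, and then feed it into Corollary \ref{cor-ms} (using Lemma \ref{lem-apriori} to verify the hypothesis $M_\ep^\ep \to 1$ with $M_n = (\lambda_n^1)^{-1}$) to obtain the sharp bubble-type bound
\[
U_n(z) \le C (\lambda_n^1)^{-\frac{N-2s}{2}} W\!\left( \frac{z - (x_n^1,0)}{\lambda_n^1} \right)
\quad \text{for } z \in B^{N+1}_+\!\big((x_n^1,0), \delta_0\big).
\]

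Next I would extract a contradiction from the decomposition \eqref{eq-refined}, $U_n = \sum_{k=1}^m PW_{\lambda_n^k, x_n^k} + R_n$, by exploiting the lower bound that the $j$-th projected bubble provides. Since $PW_{\lambda_n^j, x_n^j} \ge c\, W_{\lambda_n^j, x_n^j}$ on a fixed neighborhood of $x_0$ (recall $0 \le PW \le W$, together with the expansion \eqref{eq-pw-exp} and the lower bound coming from the comparison principle as in Step 2 of Lemma \ref{lem-moser-1}), at the point $z = (x_n^j, 0)$ we get $U_n(x_n^j,0) \ge c\, \alpha_{N,s} (\lambda_n^j)^{-(N-2s)/2} + R_n(x_n^j,0)$, while the remainder contributes negligibly after a suitable choice of test point. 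On the other hand, evaluating the moving-sphere bound at the same point and using $|x_n^j - x_n^1| \to 0$ gives
\[
U_n(x_n^j, 0) \le C (\lambda_n^1)^{-\frac{N-2s}{2}} W\!\left( \frac{x_n^j - x_n^1}{\lambda_n^1} \right)
\le C (\lambda_n^1)^{-\frac{N-2s}{2}} \left( \frac{\lambda_n^1}{\lambda_n^1 + |x_n^j - x_n^1|} \right)^{N-2s}.
\]
Combining the two inequalities yields
\[
\left( \frac{\lambda_n^1}{\lambda_n^j} \right)^{\frac{N-2s}{2}}
\le C \left( \frac{\lambda_n^1}{\lambda_n^1 + |x_n^j - x_n^1|} \right)^{N-2s},
\]
i.e. $(\lambda_n^j)^{-(N-2s)/2} \lesssim (\lambda_n^1)^{(N-2s)/2} (\lambda_n^1 + |x_n^j - x_n^1|)^{-(N-2s)}$. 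Rearranging, this forces $\lambda_n^1 \lambda_n^j \gtrsim (\lambda_n^1 + |x_n^j - x_n^1|)^2 \ge |x_n^j - x_n^1|^2$, hence $|x_n^j - x_n^1|^2/(\lambda_n^1 \lambda_n^j)$ stays bounded; combined with $\lambda_n^1/\lambda_n^j \le 1$ this makes the whole quantity $\lambda_n^1/\lambda_n^j + \lambda_n^j/\lambda_n^1 + |x_n^i - x_n^j|^2/(\lambda_n^1 \lambda_n^j)$ bounded, directly contradicting the separation condition \eqref{eq-cc-cond} of the concentration-compactness decomposition. This contradiction proves \eqref{eq-peak-split}.

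The main obstacle I anticipate is making the lower bound $U_n(x_n^j,0) \gtrsim (\lambda_n^j)^{-(N-2s)/2}$ rigorous: one must control the cross-interaction of the other bubbles $PW_{\lambda_n^k,x_n^k}$ ($k \ne j$) and the remainder $R_n$ at the near-optimal point, and the cleanest way is probably not to evaluate pointwise at $(x_n^j,0)$ (where $R_n$ need not be small) but rather to test \eqref{eq-ext} against $PW_{\lambda_n^j, x_n^j}$ and use the orthogonality-type estimates of Lemma \ref{lem-apriori}, i.e. $\int_\Omega u_n^{p-\ep_n} Pw_{\lambda_n^j,x_n^j}\,dx \sim \int_{\mr^N} w^{p+1}\,dx$, together with the moving-sphere upper bound on $u_n$ inside $B^N(x_0,\delta_0)$, to derive the same size relation between $\lambda_n^1$, $\lambda_n^j$ and $|x_n^1 - x_n^j|$ in integrated rather than pointwise form. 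Either route leads to the quantitative inequality above; the bookkeeping of error terms is routine once the sharp bubble bound \eqref{eq-peak-split-1} and Lemma \ref{lem-apriori} are in hand.
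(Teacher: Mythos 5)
Your overall strategy (moving--sphere upper bound centered at one bubble, a lower bound coming from the other bubble, contradiction with \eqref{eq-cc-cond}) is the same as the paper's, but your case analysis is incomplete. The inequality you derive, $(\lambda_n^1/\lambda_n^j)^{(N-2s)/2}\le C\bigl(\lambda_n^1/(\lambda_n^1+|x_n^j-x_n^1|)\bigr)^{N-2s}$, rearranges to $|x_n^j-x_n^1|^2\le C\lambda_n^1\lambda_n^j$, which bounds the \emph{third} term of \eqref{eq-cc-cond} and the ratio $\lambda_n^1/\lambda_n^j\le 1$, but says nothing about the \emph{second} term $\lambda_n^j/\lambda_n^1$. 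The separation condition \eqref{eq-cc-cond} is a sum of three terms going to infinity; it is perfectly consistent with your derived inequality if $\lambda_n^j/\lambda_n^1\to\infty$ (e.g.\ $\lambda_n^1=n^{-2}$, $\lambda_n^j=n^{-1}$, $|x_n^j-x_n^1|=n^{-3/2}$ satisfies both your inequality and \eqref{eq-cc-cond}). So the case of colliding bubbles with non-comparable rates is not excluded by your argument. The paper splits into exactly these two cases: when $\lambda_n^i/\lambda_n^j\to 0$ it uses a \emph{different} pair of bounds — a lower bound $U_n\ge C(\lambda_n^m)^{(N-2s)/2}$ at a point at fixed distance $\delta_0/2$ from the collision point (obtained from Green's representation and Fatou's lemma, \eqref{eq-peak-split-4}), against the upper bound $C(\lambda_n^1)^{(N-2s)/2}$ there coming from \eqref{eq-peak-split-1} — which forces $\lambda_n^m\lesssim\lambda_n^1$ and rules this case out. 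Only then, with comparability \eqref{eq-peak-split-5} in hand, does the paper run the argument you propose.

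There is a second, related gap that you partly anticipate: the pointwise lower bound $u_n(x_n^j)\ge c(\lambda_n^j)^{-(N-2s)/2}$ cannot be obtained from the $\mh$-decomposition alone ($R_n\to 0$ only in norm, and $0\le PW\le W$ gives an upper, not lower, bound on $PW$ at the center without further work). The paper gets it from the $C^\alpha$ convergence $(\lambda_n^j)^{(N-2s)/2}u_n(\lambda_n^j\cdot+x_n^j)\to w$, which itself relies on the uniform bound $U_n\le C(\lambda_n^j)^{-(N-2s)/2}$ — and that bound is available only \emph{after} the rates have been shown comparable. So the two cases cannot be collapsed into one: resolving the non-comparable case first is what makes the pointwise lower bound at the bubble center legitimate. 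Your proposed integrated alternative (testing against $Pw_{\lambda_n^j,x_n^j}$ as in Lemma \ref{lem-apriori}) would also need this case distinction to convert the integral identity into the size relation you want.
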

\begin{proof}
Assume that two different blow-up points converge to the same point $x' \in \Omega$.
By \eqref{eq-cc-cond} and \eqref{eq-lam-order}, one of the following holds: \[\text{(1) } \lim_{n \to \infty} \frac{\lambda_n^i}{\lambda_n^j} = 0
\quad \text{or} \quad \text{(2) } \lim_{n \to \infty} \frac{|x_n^i - x_n^j|^2}{\lambda_n^i \lambda_n^j} = \infty.\]

Suppose that (1) holds. Then by \eqref{eq-lam-order} it should be true that \begin{equation}\label{eq-peak-split-6}
\lim_{n \to \infty} {\lambda_n^1 \over \lambda_n^m} = 0.
\end{equation}
We shall prove that it cannot happen.
By Corollary \ref{cor-ms}, we have an upper bound \eqref{eq-peak-split-1}. Furthermore, we can find a lower bound
\begin{equation}\label{eq-peak-split-2}
U_n(z) \geq C (\lambda_n^m)^{\frac{N-2s}{2}} \quad \text{for all } z \in B^{N+1}_+((x',0),\delta_0)
\end{equation}
where $\delta_0 > 0$ is a number in \eqref{eq-peak-split-1} (taken smaller if required).
Indeed, by \eqref{eq-pw-exp}, \eqref{eq-cc}, \eqref{eq-cc-cond} and Lemma \ref{lem-alt}, we have
\[(\lambda_n^m)^{\frac{N-2s}{2}} u_n \(\lambda_n^m y + x_n^m\) \to w(y) \quad \text{for a.e. } y \in \mr^N.\]
Thus Green's representation formula, Fatou's lemma and Lemma \ref{lem-apriori} show
\begin{equation}\label{eq-peak-split-4}
\begin{aligned}
 U_n(z) &\geq  \int_{B^N(x_n^m,\delta_0)} G_{\mc}(z,x)\, u_n^{p-\ep_n}(x)\, dx
\geq C  \int_{B^N(x_n^m,\delta_0)} u_n^{p-\ep_n}(x)\, dx
\\
& = C (\lambda_n^m)^{\frac{N-2s}{2}(1+\ep_n)} \int_{B^N(0, \delta_0/\lambda_n^m)}
 \left[ (\lambda_n^m)^{\frac{N-2s}{2}} u_n \( \lambda_n^my + x_n^m\) \right]^{p-\ep_n} dy
\\
&\geq C \(\int_{\mr^N} w^p (y)\, dy + o(1) \) (\lambda_n^m)^{\frac{N-2s}{2}},
\end{aligned}
\end{equation}
which confirms \eqref{eq-peak-split-2}.
Now fixing any point $z^* \in \mr^{N+1}_+$ such that $|z^*-(x',0)| = \delta_0/2$ and putting it into \eqref{eq-peak-split-1} and \eqref{eq-peak-split-2},
we discover that $(\lambda_n^m)^{N-2s \over 2} \le C (\lambda_n^1)^{N-2s \over 2}$ for some $C > 0$, contradicting \eqref{eq-peak-split-6}.
Therefore (1) is false and we may assume that
\begin{equation}\label{eq-peak-split-5}
\lim_{n \to \infty} \frac{\lambda_n^i}{\lambda_n^j} = c_0 \quad \text{for some } c_0 \in (0,1].
\end{equation}

Assume that (2) is true. Owing to \eqref{eq-peak-split-5}, inequality \eqref{eq-peak-0} can be written as
\begin{equation}\label{eq-peak-split-7}
U_n(z) \le C (\lambda_n^j)^{-{N-2s \over 2}} \le C (\lambda_n^i)^{-{N-2s \over 2}} \quad \text{for } z \in \mc \text{ and } n \in \mn.
\end{equation}
Hence we infer from elliptic regularity and Corollary \ref{cor-ms} that
\[(\lambda_n^j)^{\frac{N-2s}{2}} u_n \(\lambda_n^j \cdot + x_n^j\) \to w \quad \text{in } C^{\alpha}(\mr^N) \text{ for some } \alpha \in (0,1)\]
and
\begin{equation}\label{eq-peak-split-3}
U_n(z) \le C(\lambda_n^i)^{-\frac{N-2s}{2}} W\({z-(x_n^j,0) \over \lambda_n^i} + {(x_n^j-x_n^i,0) \over \lambda_n^i}\)
\end{equation}
for all $z \in B^{N+1}_+((x',0), \delta_0/2)$ and large $n \in \mn$.
Since $\lim_{n \to \infty} |x_n^j - x_n^i|/\lambda_n^i = \infty$ holds because of \eqref{eq-lam-order},
if we take $z = (x_n^j, 0)$ in inequality \eqref{eq-peak-split-3} and use \eqref{eq-peak-split-7}, then we get
\[C (\lambda_n^j)^{-{N-2s \over 2}} \le u_n(x_n^j) \le C (\lambda_n^i)^{-{N-2s \over 2}} w\({x_n^j-x_n^i \over \lambda_n^i}\) = o(1) \cdot (\lambda_n^i)^{-{N-2s \over 2}}\]
provided $n \in \mn$ large.
However, this is absurd as \eqref{eq-peak-split-5} holds, and so (2) does not hold either.

Summing up, every possible case is excluded if two blow-up points tend to the same point.
Accordingly, \eqref{eq-peak-split} has the validity.
\end{proof}

In the following lemma, we study the behavior of solutions $u_n$ to \eqref{eq-main} outside the blow-up points $\{x_0^1, \cdots, x_0^m\}$. We set
\begin{equation}\label{eq-A_r}
A_r = \Omega \setminus \bigcup_{i=1}^m B^N(x_0^i, r) \quad \text{for any } r > 0.
\end{equation}
\begin{lem}\label{lem-u-zero}
Suppose that $\{U_n\}_{n \in \mn}$ is a family of solutions for \eqref{eq-ext} with $\ep = \ep_n$ satisfying the asymptotic behavior \eqref{eq-refined}.
Then for any small $r > 0$, we have $u_n(x) = O((\lambda_n^m)^{\frac{N-2s}{2}})$ uniformly for $x \in A_r$.
\end{lem}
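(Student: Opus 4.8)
The plan is to combine Green's representation formula with the pointwise bounds near the concentration points and a short bootstrap. Denote by $G$ Green's function of $(-\Delta)^s$ on $\Omega$, so that $u_n(x)=\int_\Omega G(x,y)\,u_n^{p-\ep_n}(y)\,dy$, and recall that $0<G(x,y)\le \gamma_{N,s}|x-y|^{-(N-2s)}$, whence $\sup_{x\in\Omega}\int_\Omega G(x,y)\,dy<\infty$. Given $r>0$, fix $\delta\in(0,r)$ small enough (less than $d_0/2$ from Lemma \ref{lem-peak-split}, less than the radius $\delta_0$ appearing below, and less than the width of the neighborhood of $\pa\Omega$ on which $\{u_n\}$ is uniformly bounded, cf.\ \eqref{eq-x_i}), and set $A_\rho^{(n)}=\Omega\setminus\bigcup_{i=1}^m B^N(x_n^i,\rho)$. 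Since $x_n^i\to x_0^i$, for large $n$ the balls $B^N(x_n^i,\delta)$ are pairwise disjoint, $A_r\subset A_\delta^{(n)}$, and $A_{\delta/2}^{(n)}\subset A_{\delta/4}$.

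Two ingredients are needed. First, a near-bubble bound: by Corollary \ref{cor-ms} applied at each $x_n^i$ — the required $L^\infty$-bound $\sup_{B^N(x_n^i,r_0)}u_n\le C(\lambda_n^i)^{-(N-2s)/2}$ and the weak convergence of the rescaling at rate $\lambda_n^i$ follow from a Moser argument as in the proof of \eqref{eq-peak-0} and from the decomposition \eqref{eq-refined}, both legitimate because Lemma \ref{lem-peak-split} guarantees that only the $i$-th bubble is present near $x_n^i$, together with Lemma \ref{lem-apriori} — there are $\delta_0>0$ and $C>0$ with $u_n(x)\le C(\lambda_n^i)^{-(N-2s)/2}\,w\big((x-x_n^i)/\lambda_n^i\big)$, hence (using $w(z)\le\alpha_{N,s}|z|^{-(N-2s)}$) also $u_n(x)\le C(\lambda_n^i)^{(N-2s)/2}|x-x_n^i|^{-(N-2s)}$, for $x\in B^N(x_n^i,\delta_0)$. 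Second, uniform smallness away from the bubbles: from $\|R_n\|_{\hc}\to0$ and the trace inequality one gets $\|r_n\|_{L^q(\Omega)}\to0$ for $q\in[1,p+1]$, and combined with $Pw_{\lambda_n^i,x_n^i}\le w_{\lambda_n^i,x_n^i}\le C_\delta(\lambda_n^i)^{(N-2s)/2}$ on $A_{\delta/4}$ this yields $\|u_n\|_{L^q(A_{\delta/4})}\to0$. Writing the equation as $(-\Delta)^su_n=a_nu_n$ with $a_n=u_n^{p-1-\ep_n}$, the identity $(p-1)\tfrac{N}{2s}=p+1$ gives $\|a_n\|_{L^{N/2s}(A_{\delta/4})}\le C\|u_n\|_{L^{p+1}(A_{\delta/4})}^{\,p-1-\ep_n}\to0$, so the Moser iteration for the extended equation (Lemma \ref{lem-high-2}) closes on the interior part of $A_{\delta/4}$ and gives $\|u_n\|_{L^\infty}\to0$ there; on the boundary collar \eqref{eq-x_i} provides an a priori $L^\infty$-bound, which makes $a_n$ a bounded coefficient and, with the zero Dirichlet datum, lets Moser iteration give $\|u_n\|_{L^\infty}\to0$ there as well. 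Consequently $m_n:=\sup_{A_{\delta/2}^{(n)}}u_n\to0$.

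Now the bootstrap. For $x\in A_\delta^{(n)}$ split $u_n(x)=\sum_{i}\int_{B^N(x_n^i,\delta/2)}G(x,y)u_n^{p-\ep_n}(y)\,dy+\int_{A_{\delta/2}^{(n)}}G(x,y)u_n^{p-\ep_n}(y)\,dy$. In the $i$-th near-bubble term $|x-y|\ge\delta/2$, so $G(x,y)\le C_\delta$; inserting the near-bubble bound, changing variables $z=(y-x_n^i)/\lambda_n^i$, and using $\int_{\mr^N}w^{p-\ep_n}\le C$ uniformly for small $\ep_n$ together with $-(N-2s)\tfrac{p-\ep_n}{2}+N=\tfrac{N-2s}{2}(1+\ep_n)$, this term is $\le C(\lambda_n^i)^{(N-2s)(1+\ep_n)/2}\le C(\lambda_n^m)^{(N-2s)/2}$, where we used $\lambda_n^i\le\lambda_n^m$ from \eqref{eq-lam-order} and $(\lambda_n^i)^{\ep_n}\to1$ from Lemma \ref{lem-apriori}. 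In the last integral $u_n(y)\le m_n$ on $A_{\delta/2}^{(n)}$, so it is at most $m_n^{p-\ep_n}\int_\Omega G(x,y)\,dy\le Cm_n^{p-\ep_n}$. Thus $u_n(x)\le C(\lambda_n^m)^{(N-2s)/2}+Cm_n^{p-\ep_n}$ on $A_\delta^{(n)}$; on each annulus $B^N(x_n^i,\delta)\setminus B^N(x_n^i,\delta/2)$ the near-bubble bound already gives $u_n\le C(\lambda_n^m)^{(N-2s)/2}$, so the same inequality holds throughout $A_{\delta/2}^{(n)}$. Taking the supremum, $m_n\le C(\lambda_n^m)^{(N-2s)/2}+Cm_n^{p-\ep_n}$; since $m_n\to0$ and $p-1-\ep_n\to p-1>0$, the quantity $Cm_n^{p-\ep_n}=Cm_n^{\,p-1-\ep_n}\cdot m_n$ is $\le\tfrac12 m_n$ for large $n$ and may be absorbed, giving $m_n\le C(\lambda_n^m)^{(N-2s)/2}$. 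Since $A_r\subset A_\delta^{(n)}\subset A_{\delta/2}^{(n)}$ for large $n$, this proves $u_n(x)=O((\lambda_n^m)^{(N-2s)/2})$ uniformly on $A_r$.

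The main difficulty is the second ingredient, i.e.\ upgrading $\|R_n\|_{\hc}\to0$ to the uniform smallness $\sup_{A_{\delta/2}^{(n)}}u_n\to0$: since the Green operator of $(-\Delta)^s$ does not embed $L^{p+1}$ into $L^\infty$ in general dimension, one cannot read off an $L^\infty$-bound from the $L^{p+1}$-smallness directly, and must instead exploit that the potential $a_n=u_n^{p-1-\ep_n}$ is small in the scaling-critical space $L^{N/2s}$ away from the bubbles, so that the Moser iteration closes without an a priori $L^\infty$-bound; the boundary collar must be handled separately, using the a priori bound \eqref{eq-x_i}. Once $m_n\to0$ is known, the bootstrap closes the argument in a couple of lines.
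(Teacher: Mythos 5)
Your proposal is correct and follows essentially the same route as the paper: uniform smallness of $u_n$ away from the bubbles via the smallness of $a_n=u_n^{p-1-\ep_n}$ in $L^{N/2s}$ and Moser iteration, the pointwise near-bubble bound from Corollary \ref{cor-ms} together with Lemma \ref{lem-apriori}, and then Green's representation split into near-bubble and far contributions with absorption of the superlinear term $m_n^{p-\ep_n}$. The only differences are cosmetic (centering the excised balls at $x_n^i$ rather than $x_0^i$, and spelling out the boundary-collar case), so nothing further is needed.
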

\begin{proof}
Let $a_n = u_n^{p-1-\ep_n}$ so that $\pa_{\nu}^s U_n = a_nu_n$ in $\Omega \times \{0\}$.
Then we see from \eqref{eq-asym} that
\[\|a_n\|_{L^{N \over 2s}(A_{r/4})} \le C \(\sum_{i=1}^m \left\|w_{\lambda_n^i ,x_n^i}\right\|^{p-1-\ep_n}_{L^{p+1-\({N \over 2s}\)\ep_n} \(\mr^N \setminus B^N(x_0^i,{r/4})\)}
+ \|R_n\|_{\hc}\) = o(1).\]
Therefore we can proceed the Moser iteration argument to get $\|a_n\|_{L^q(A_{r/2})} = o(1)$ for some $q > {N \over 2s}$, and it further leads to $\|u_n\|_{L^{\infty}(A_r)} = o(1)$ (see Section 3 in \cite{CKL}).

Assume that $r \in (0, \min\{\delta_0, d_0/2\})$ where $\delta_0 > 0$ and $d_0$ are the numbers picked up in Corollary \ref{cor-ms} and Lemma \ref{lem-peak-split}, respectively.
Then the argument used to derive \eqref{eq-peak-0} with Lemma \ref{lem-peak-split} deduces
\[U_n(x,t) \le C(\lambda_n^i)^{-{N-2s \over 2}} \quad \text{for } |x - x_0^i| \le r \text{ and } t \ge 0\]
so that Corollary \ref{cor-ms} implies
\[u_n(x) \le C(\lambda_n^i)^{-\frac{N-2s}{2}} w\({x-x_n^i \over \lambda_n^i}\) \le C(\lambda_n^i)^{\frac{N-2s}{2}} \quad \text{for } {r \over 2} \le |x - x_0^i| \le r\]
where $i = 1, \cdots, m$.
By Green's representation formula, one may write
\[u_n(x) = \int_{A_{r/2}} G(x,y)\, u_n^{p-\ep_n}(y)\, dy + \sum_{i=1}^m \int_{B^N(x_0^i,r/2)} G(x,y)\, u_n^{p-\ep_n}(y)\, dy.\]
If we set $b_n = \|u_n\|_{L^{\infty}(A_r)}$, then we observe with assumption \eqref{eq-lam-order} that
\begin{equation}\label{eq-bound-1}
\begin{aligned}
\int_{A_{r/2}} G(x,y)\, u_n^{p-\ep_n}(y)\, dy
&\le C \int_{A_{r/2}} G(x,y) \(b_n^{p-\ep_n} + \max\{\lambda_n^1, \cdots, \lambda_n^m\}^{{\frac{N-2s}{2}}(p-\ep_n)}\) dy \\
&\le C \(b_n^{p-\ep_n} + \(\lambda_n^m\)^{{\frac{N-2s}{2}}(p-\ep_n)}\)
\end{aligned}
\end{equation}
for any $x \in A_r$. Besides, Corollary \ref{cor-ms} and Lemma \ref{lem-apriori} give us that
\begin{equation}\label{eq-bound-2}
\begin{aligned}
\int_{B^N(x_0^i,r/2)} G(x,y)\, u_n^{p-\ep_n}(y)\, dy &\le C \int _{B^N(x_0^i,r/2)} u_n^{p-\ep_n} (y)\, dy\\
&\le C \int_{B^N(x_0^i,r/2)} w_{\lambda_n^i,x_n^i}^{p-\ep_n}(y)\, dy
\le C (\lambda_n^i)^{\frac{N-2s}{2}}
\end{aligned}
\end{equation}
for all $x \in A_r$ and each $i = 1, \cdots, m$.
Hence, by combining \eqref{eq-bound-1} and \eqref{eq-bound-2}, we get
\[b_n \le C \(b_n^{p-\ep_n} + (\lambda_n^m)^{\frac{N-2s}{2}}\).\]
Since we have $p -\ep_n > 1$ and $b_n = o(1)$, the above inequality implies that $b_n \le C(\lambda_n^m)^{\frac{N-2s}{2}}$.
The lemma is proved.
\end{proof}

We prove the compatibility of the blow-up rates $\{\lambda_n^1, \cdots, \lambda_n^m\}$.
\begin{lem}\label{lem-com}
There exists a constant $C_0 > 0$ independent of $n \in \mn$ such that
\[\frac{\lambda_n^i}{\lambda_n^j} \le C_0 \quad \text{for any } 1 \leq i, j \leq m.\]
\end{lem}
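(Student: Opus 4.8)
The plan is to reduce everything to the ordered setting \eqref{eq-lam-order}: it suffices to exhibit a constant $C_0>0$ with $\lambda_n^m\le C_0\lambda_n^1$, since then $\lambda_n^i/\lambda_n^j\le\lambda_n^m/\lambda_n^1\le C_0$ for all $i,j$; the cases $m\le 1$ being trivial, assume $m\ge 2$. The mechanism is the one already used to rule out alternative (1) in the proof of Lemma \ref{lem-peak-split}: pick a single fixed interior point $z^*$ that is close to the smallest bubble but far from every other bubble, and squeeze $U_n(z^*)$ between an upper bound of order $(\lambda_n^1)^{(N-2s)/2}$, coming from the bubble at $x_0^1$, and a lower bound of order $(\lambda_n^m)^{(N-2s)/2}$, coming from the bubble at $x_0^m$.

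First I would fix the geometry. Using the separation $|x_0^i-x_0^j|\ge d_0$ of Lemma \ref{lem-peak-split} together with \eqref{eq-x_i}, choose $\delta_0>0$ small (not larger than the radius in Corollary \ref{cor-ms} and than $\delta$, and with $3\delta_0<d_0$) and a point $x^*\in\Omega$ with $|x^*-x_0^1|=\tfrac23\delta_0$; then $x^*\in A_{\delta_0/2}$, the distance from $x^*$ to each of $x_0^2,\dots,x_0^m$ is bounded below by a fixed positive number, and $|x^*-x_n^1|\to\tfrac23\delta_0<\delta_0$. Set $z^*=(x^*,0)$, so that $U_n(z^*)=u_n(x^*)$ and, for $n$ large, $z^*$ lies in the region where the bound \eqref{eq-peak-split-1} centered at $(x_n^1,0)$ applies.

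For the upper bound I would feed $z^*$ into the pointwise estimate \eqref{eq-peak-split-1} (obtained from Corollary \ref{cor-ms}) and use the decay $W(z)\le C|z|^{-(N-2s)}$ of the bubble (Appendix \ref{sec-W}): since $|z^*-(x_n^1,0)|\to\tfrac23\delta_0$, this gives $U_n(z^*)\le C(\lambda_n^1)^{-(N-2s)/2}W\big((z^*-(x_n^1,0))/\lambda_n^1\big)\le C(\lambda_n^1)^{(N-2s)/2}$. For the lower bound I would run exactly the computation \eqref{eq-peak-split-4}: $G_{\mc}(z^*,\cdot)=G(x^*,\cdot)$ is continuous and strictly positive on $\Omega\setminus\{x^*\}$, hence bounded below by some $c>0$ on $B^N(x_n^m,\delta_0)$ for $n$ large; then, combining Green's representation formula with the a.e.\ convergence $(\lambda_n^m)^{(N-2s)/2}u_n(\lambda_n^m\,\cdot\,+x_n^m)\to w$ already recorded in the proof of Lemma \ref{lem-peak-split}, Fatou's lemma and Lemma \ref{lem-apriori} yield $U_n(z^*)\ge c\,\big(\int_{\mr^N}w^p+o(1)\big)\,(\lambda_n^m)^{(N-2s)/2}$. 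Comparing the two inequalities forces $(\lambda_n^m/\lambda_n^1)^{(N-2s)/2}\le C$, which is the desired bound with $C_0=C^{2/(N-2s)}$.

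The computational content is entirely routine, the two inequalities being instances of estimates already performed in Lemmas \ref{lem-alt} and \ref{lem-peak-split}. The only genuinely new point — and the reason this cannot simply be read off from Lemma \ref{lem-peak-split} — is the selection of $z^*$: there the two competing bubbles concentrated at a common point, so one half-sphere around that point served for both estimates, whereas here the first and last bubbles may sit at distinct points, and one must check that a single fixed point can simultaneously lie in the region where the Corollary \ref{cor-ms} bound centered at $x_0^1$ is valid and at positive distance from $x_0^m$; the $d_0$-separation of Lemma \ref{lem-peak-split} is precisely what makes this possible.
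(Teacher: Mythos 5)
Your proposal is correct and follows essentially the same route as the paper: the paper likewise squeezes $u_n$ on an annulus around a blow-up point between the lower bound $C(\lambda_n^i)^{(N-2s)/2}$ obtained from Green's representation and Fatou's lemma as in \eqref{eq-peak-split-4}, and the upper bound $C(\lambda_n^j)^{(N-2s)/2}$ coming from the moving-sphere estimate of Corollary \ref{cor-ms} (via Lemma \ref{lem-u-zero}). Your reduction to the single pair $(\lambda_n^1,\lambda_n^m)$ via the ordering \eqref{eq-lam-order} is only a cosmetic difference from the paper's treatment of all pairs $i,j$ at once.
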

\begin{proof}
As in \eqref{eq-peak-split-4}, it can be verified that $u_n (x) \geq C (\lambda_n^i)^{\frac{N-2s}{2}}$ in $\bigcup_{k=1}^m B^N(x_0^k, r)$ for each $i = 1, \cdots, m$.
As a matter of fact, it is possible to substitute $x_n^m$ and $\lambda_n^m$ in \eqref{eq-peak-split-4} with $x_n^i$ and $\lambda_n^i$, respectively.

On the other hand, we know from Lemma \ref{lem-u-zero} that $u_n (x) \leq C (\lambda_n^j)^{\frac{N-2s}{2}}$ for $x \in B^N(x_0^j,r) \setminus B^N(x_0^j, r/2)$.
Thus we have $(\lambda_n^i)^{\frac{N-2s}{2}} \leq C (\lambda_n^j)^{\frac{N-2s}{2}}$ for any $1 \leq i, j \leq m$.
The proof is done.
\end{proof}

As in the statement of Theorem \ref{thm-ch}, we set $b_i =\lim_{n \to \infty} \(\frac{\lambda_n^i}{\lambda_n^1}\)^{\frac{N-2s}{2}} \in (0, \infty)$ for any $i = 1, \cdots, m$.
\begin{lem}\label{lem-u-asym}
Suppose that $\{U_n\}_{n \in \mn}$ is a sequence of solutions to equation \eqref{eq-ext} with $\ep = \ep_n$ which admit the asymptotic behavior \eqref{eq-refined}. Then it holds
\begin{equation}\label{eq-u-ep-asym}
\lim_{n \to \infty} (\lambda_n^1)^{-\frac{N-2s}{2}} U_n(x,t) = c_1 \sum_{i=1}^m b_i\, G_{\mc}((x,t),x_0^i)
\end{equation}
in $C^0 (\mc' \setminus\{(x_0^1,0), \cdots, (x_0^m,0)\})$.
Furthermore, we have
\begin{equation}\label{eq-u-ep-asym-2}
\lim_{n \to \infty} (\lambda_n^1)^{-\frac{N-2s}{2}} \nabla_x^k U_n (x,t) = c_1 \sum_{i=1}^m b_i\, \nabla_x^k G_{\mc}((x,t),x_0^i)
\end{equation}
for $1 \le k \le 2$ and
\begin{equation}\label{eq-u-ep-asym-3}
\lim_{n \to \infty} (\lambda_n^1)^{-\frac{N-2s}{2}} t^{l-2s} \pa_t^l \nabla_x^k U_n (x,t) = c_1 \sum_{i=1}^m b_i\, t^{l-2s} \pa_t^l \nabla_x^k G_{\mc}((x,t),x_0^i)
\end{equation}
for any pair $(k, l)$ such that $0 \le k \le 1$, $1 \le l \le 2$ and $1 \le k+l \le 2$ in $C^0 (\mc' \setminus\{(x_0^1,0), \cdots, (x_0^m,0)\})$.
We remind that $\mc' = \Omega \times [0, \infty)$ and $c_1 = \int_{\mr^N} w^p(x)\, dx > 0$.
\end{lem}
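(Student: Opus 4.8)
The plan is to run a concentration argument based on Green's representation formula, using the pointwise information already gathered in Lemmas \ref{lem-apriori}, \ref{lem-alt}, \ref{lem-peak-split}, \ref{lem-u-zero} and \ref{lem-com}, and then to upgrade the resulting $C^0$ convergence to convergence of derivatives via the regularity theory of Appendix \ref{sec-app-b}. Since $U_n$ solves \eqref{eq-ext} and $G_{\mc}(\cdot,y)$ solves \eqref{eq-Green}, one has the identity $U_n(x,t)=\int_\Omega G_{\mc}((x,t),y)\,u_n^{p-\ep_n}(y)\,dy$ for every $(x,t)\in\mc'$. Fix a compact set $K\subset\mc'\setminus\{(x_0^1,0),\dots,(x_0^m,0)\}$ and choose $r\in(0,d_0/2)$ (with $d_0$ from Lemma \ref{lem-peak-split}) so small that $K\cap B^{N+1}_+((x_0^i,0),2r)=\emptyset$ for each $i$ and the balls $B^N(x_0^i,r)$ are mutually disjoint and compactly contained in $\Omega$. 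Multiplying the representation formula by $(\lambda_n^1)^{-(N-2s)/2}$ and splitting $\Omega=A_r\cup\bigcup_{i=1}^m B^N(x_0^i,r)$ reduces the problem to estimating each piece.

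For the $A_r$-part, Lemma \ref{lem-u-zero} gives $\|u_n\|_{L^\infty(A_r)}=O\big((\lambda_n^m)^{(N-2s)/2}\big)$, while $\int_\Omega G_{\mc}((x,t),y)\,dy$ is bounded uniformly in $(x,t)\in\mc'$, being the $s$-harmonic extension of a bounded function. Hence this contribution is $O\big((\lambda_n^1)^{-(N-2s)/2}(\lambda_n^m)^{(N-2s)(p-\ep_n)/2}\big)$ uniformly on $K$, which tends to $0$ because $\lambda_n^m\le C_0\lambda_n^1$ by Lemma \ref{lem-com} and $p>1$. For the $i$-th peak, substitute $y=x_n^i+\lambda_n^i\eta$ and set $\omega_n^i(\eta):=(\lambda_n^i)^{(N-2s)/2}u_n(x_n^i+\lambda_n^i\eta)$. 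By \eqref{eq-pw-exp}, \eqref{eq-refined}, \eqref{eq-cc-cond} and Lemma \ref{lem-alt}, $\omega_n^i\to w$ in $C^\alpha_{\mathrm{loc}}(\mr^N)$, and Corollary \ref{cor-ms} (applied as in Lemma \ref{lem-u-zero}) gives $\omega_n^i(\eta)\le Cw(\eta)$ on the relevant range of $\eta$. Using $p=\frac{N+2s}{N-2s}$ to compute $N-\frac{N-2s}{2}(p-\ep_n)=\frac{N-2s}{2}+\frac{N-2s}{2}\ep_n$, together with $(\lambda_n^i)^{\ep_n}\to1$ from Lemma \ref{lem-apriori}, the $i$-th piece equals $\big(\lambda_n^i/\lambda_n^1\big)^{(N-2s)/2}(1+o(1))\int G_{\mc}\big((x,t),x_n^i+\lambda_n^i\eta\big)\,\omega_n^i(\eta)^{p-\ep_n}\,d\eta$.

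It remains to pass to the limit in this last integral. Splitting it at $|\eta|=L$: on $\{|\eta|\le L\}$ we have $x_n^i+\lambda_n^i\eta\to x_0^i$, and for $(x,t)\in K$ and $y$ in a small ball around $x_0^i$ one has $|(x,t)-(y,0)|\ge r/2>0$, so $G_{\mc}$ is bounded and continuous there and dominated convergence yields the limit $G_{\mc}((x,t),x_0^i)\int_{B^N(0,L)}w^p$, uniformly on $K$; on $\{|\eta|\ge L\}$ the integral is bounded by $C\int_{\{|\eta|\ge L\}}w^{p-\ep_n}\,d\eta$, which is small uniformly in large $n$ since $w^{p-\ep_n}$ decays faster than $|\eta|^{-N-s}$ once $\ep_n$ is small. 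Letting $n\to\infty$ and then $L\to\infty$, and using $(\lambda_n^i/\lambda_n^1)^{(N-2s)/2}\to b_i$ and $\int_{\mr^N}w^p=c_1$, the $i$-th piece converges uniformly on $K$ to $b_ic_1G_{\mc}((x,t),x_0^i)$. Summing over $i$ establishes \eqref{eq-u-ep-asym}.

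For the derivative statements \eqref{eq-u-ep-asym-2} and \eqref{eq-u-ep-asym-3}, put $\widetilde U_n:=(\lambda_n^1)^{-(N-2s)/2}U_n$, which solves $\mathrm{div}(t^{1-2s}\nabla\widetilde U_n)=0$ in $\mc$ with $\pa_\nu^s\widetilde U_n=(\lambda_n^1)^{-(N-2s)/2}u_n^{p-\ep_n}$ on $\Omega\times\{0\}$; by the $A_r$-estimate this conormal datum converges to $0$ uniformly on compact subsets of $(\Omega\setminus\{x_0^1,\dots,x_0^m\})\times\{0\}$, consistently with $\pa_\nu^sG_{\mc}(\cdot,x_0^i)=0$ off $x_0^i$. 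Since $\{\widetilde U_n\}$ is bounded in $C^0$ on a slightly larger compact set by the convergence just proved, the interior and boundary regularity results of Appendix \ref{sec-app-b} — the degenerate Schauder-type estimates in the spirit of \eqref{eq-prop-h1}--\eqref{eq-prop-h2} together with Lemma \ref{lem-high-2} — give uniform bounds on $K$ for $\nabla_x^k\widetilde U_n$ with $1\le k\le2$ and for $t^{l-2s}\pa_t^l\nabla_x^k\widetilde U_n$ with $0\le k\le1$, $1\le l\le2$, $1\le k+l\le2$. The Arzel\`a--Ascoli theorem then extracts from any subsequence a further subsequence along which these quantities converge in $C^0(K)$, and by \eqref{eq-u-ep-asym} the limits must be the corresponding derivatives of $c_1\sum_i b_iG_{\mc}(\cdot,x_0^i)$; uniqueness of the limit upgrades this to convergence of the whole sequence. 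I expect the last step to be the main obstacle: producing the uniform \emph{up-to-the-boundary} estimates with the correct $t^{l-2s}$-weights for the degenerate operator $\mathrm{div}(t^{1-2s}\nabla\cdot)$, which is exactly why the elliptic regularity of Appendix \ref{sec-app-b} is needed — everything preceding it is a routine concentration argument once Lemmas \ref{lem-apriori}--\ref{lem-com} are in hand.
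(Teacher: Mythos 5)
Your proposal is correct and follows essentially the same route as the paper: Green's representation for $U_n$, splitting $\Omega$ into the far-field region $A_r$ (handled by Lemma \ref{lem-u-zero} and Lemma \ref{lem-com}) and the peak balls (handled by the domination from Corollary \ref{cor-ms}, Lemma \ref{lem-apriori} and dominated convergence), followed by the Appendix \ref{sec-app-b} regularity estimates plus Arzel\`a--Ascoli to identify the limits of the derivatives. The only cosmetic difference is that you rescale the peak integral and cut at $|\eta|=L$, whereas the paper freezes $G_{\mc}(z,\cdot)$ at $x_0^i$ and controls the remainder by the mean value theorem; both rest on the same inputs and yield the same conclusion.
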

\begin{proof}
Take any $r > 0$ small for which Lemma \ref{lem-u-zero} holds. We are concerned with the values of $U_n (z)$ for $z \in A_r' := \mc' \setminus \cup_{i=1}^m \overline{B^{N+1}_+((x_0^i,0), r)}$. Let us look at
\begin{equation}\label{eq-u-asym-1}
U_n(z) = \int_{A_{r/2}} G_{\mc}(z,y)\, u_n^{p-\ep_n}(y)\, dy + \sum_{i=1}^m \int_{B^N(x_0^i,r/2)} G_{\mc}(z,y)\, u_n^{p-\ep_n}(y)\, dy.
\end{equation}
Then by the previous lemma we have
\[(\lambda_n^1)^{-\frac{N-2s}{2}} \int_{A_{r/2}} G_{\mc}(z,y)\, u_n^{p-\ep_n}(y)\, dy
\le C (\lambda_n^1)^{-\frac{N-2s}{2}} (\lambda_n^m)^{\frac{N-2s}{2}{(p-\ep_n)}} \int_{\Omega} G_{\mc}(z,y)\, dy = o(1).\]
Let us decompose
\begin{align*}
&\int_{B^N(x_0^i,r/2)} G_{\mc}(z,y)\, u_n^{p-\ep_n}(y)\, dy\\
&= G_{\mc}(z,x_0^i) \int_{B^N(x_0^i,r/2)} u_n^{p-\ep_n}(y)\, dy + \int_{B^N(x_0^i,r/2)} (G_{\mc}(z,y) - G_{\mc}(z,x_0^i))\, u_n^{p-\ep_n}(y)\, dy
\end{align*}
for each $i \in \{1, \cdots, m\}$. Since
\[(\lambda_n^i)^{\frac{N-2s}{2}} u_n \( \lambda_n^i y + x_n^i\) \rightharpoonup w(y) \quad \text{weakly in } H^s(\mr^N),\]
according to Corollary \ref{cor-ms} and the Lebesgue dominated convergence theorem, we get
\[(\lambda_n^1)^{-\frac{N-2s}{2}} \int_{B^N(x_0^i,r/2)} u_n^{p-\ep_n}(y)\, dy \to b_i \int_{\mr^N} w^p (y)\, dy.\]
Also, employing the mean value theorem, we calculate
\begin{align*}
&\ \left|(\lambda_n^1)^{-\frac{N-2s}{2}} \int_{B^N(x_0^i,r/2)} (G_{\mc}(z,y)-G_{\mc}(z,x_0^i))\, u_n^{p-\ep_n}(y)\, dy \right|
\\
&\le (\lambda_n^1)^{-\frac{N-2s}{2}} \int_{B^N(x_0^i,r/2)} \sup_{z \in A'_r,\, a \in (0,1)} \left\|\nabla_y G_{\mc}(z,ay + (1-a)x_0^i)\right\| \cdot |y-x_0^i|\, u_n^{p-\ep_n}(y)\, dy
\\
&\le C (\lambda_n^1)^{-\frac{N-2s}{2}} r^{1-s} \int_{B^N(x_n^i,3r/4)} |y-x_0^i|^s\, u_n^{p-\ep_n}(y)\, dy
\\
&\le C b_i r^{1-s} \left[(\lambda_n^i)^s \(\int_{\mr^N} |y|^s w^p(y) \, dy + o(1)\) + |x_n^i-x_0^i|^s \(\int_{\mr^N} w^p(y) \, dy + o(1)\) \right] = o(1).
\end{align*}
Therefore, combining all the computations, we see that \eqref{eq-u-ep-asym} holds uniformly for $z = (x,t) \in A'_r$.
Since $r > 0$ is arbitrary, it follows that \eqref{eq-u-ep-asym} is valid in $C^0 (\mc' \setminus\{(x_0^1,0), \cdots, (x_0^m,0)\})$.

\medskip
In order to show \eqref{eq-u-ep-asym-2} and \eqref{eq-u-ep-asym-3}, we need some results on elliptic regularity. The proof is deferred to Appendix \ref{sec-app-b}.
\end{proof}
\begin{rem}
For the future use, we rewrite \eqref{eq-u-ep-asym} as
\begin{equation}\label{eq-lim-u}
\lim_{n \to \infty} (\lambda_n^1)^{-\frac{N-2s}{2}} U_n(x,t) = {c_3 b_i \over |(x-x_0^i,t)|^{N-2s}} + \mt_i(x,t)
\end{equation}
for $(x,t) \in \mc' \setminus\{(x_0^1,0), \cdots, (x_0^m,0)\}$ and $1 \le i \le m$.
Here $c_3 := c_1 \gamma_{N,s} > 0$ and $\mt_i$ is a map defined by
\begin{equation}\label{eq-mh-2}
\mt_i(x,t) = - c_1 b_i H_{\mc}((x,t),x_0^i) + c_1 \sum_{k \ne i} b_k G_{\mc}((x,t),x_0^k).
\end{equation}
If $r \in (0, d_0/2)$ where $d_0 > 0$ is set in Lemma \ref{lem-peak-split},
then \eqref{eq-prop-G} and \eqref{eq-prop-H} imply that
the functions $\mt_i$, ${\pa \mt_i \over \pa x_j}$ and $z \cdot \nabla \mt_i$ are $s$-harmonic in $B^{N+1}_+((x_0^i,0),r)$ for all $1 \le i \le m$ and $1 \le j \le N$, i.e.,
\begin{equation}\label{eq-mh-1}
\begin{cases}
\text{div}(t^{1-2s} \nabla \mt_i) = \text{div}\(t^{1-2s} \nabla \(\dfrac{\pa \mt_i}{\pa x_j}\)\) = \text{div}\(t^{1-2s} \nabla (z \cdot \nabla \mt_i)\) =  0 &\text{in } B^{N+1}_+((x_0^i,0),r),\\
\pa_{\nu}^s \mt_i = \pa_{\nu}^s \(\dfrac{\pa \mt_i}{\pa x_j}\) = \pa_{\nu}^s (z \cdot \nabla \mt_i) = 0 &\text{on } B^N(x_0^i,r)
\end{cases}
\end{equation}
holds.
\end{rem}

\section{Proof of Main Theorems for the Spectral Fractional Laplacians} \label{sec-bi}
This section is devoted to the proof of our main theorems.
To get the desired results, we will derive two identities regarding blow-up points and rates by exploiting a type of Green's identity.
For notational simplicity, we use $z-x_0^i$ to denote $(x-x_0^i,t)$ throughout the section.

\medskip
As before, let $\{U_n\}_{n \in \mn}$ be a sequence of solutions to \eqref{eq-ext} with $\ep = \ep_n$ of the form \eqref{eq-refined}.
We remind from \eqref{eq-ext} that $U_n$ is a solution of the problem
\begin{equation}\label{eq-u-b}
\begin{cases}
\text{div}(t^{1-2s} \nabla U_n)=0 &\text{in } \mc,\\
\pa_{\nu}^s U_n = U_n^{p-\ep_n} &\text{on } \Omega \times \{0\}.
\end{cases}
\end{equation}
By the translation and scaling invariance of \eqref{eq-u-b}, the functions $V= \frac{\pa U_n}{\pa x_j}$ and $V = (z - x_0^i) \cdot \nabla  U_n + \( \frac{2s}{p-1-\ep_n}\) U_n$ (for each $1 \le i \le m$ and $1 \le j \le N$) satisfy the equation
\begin{equation}\label{eq-v}
\begin{cases}
\text{div}(t^{1-2s} \nabla V)=0 &\text{in } \mc,
\\
\pa_{\nu}^s V = (p-\ep_n)U_n^{p-1-\ep_n} V &\text{on } \Omega \times \{0\}.
\end{cases}
\end{equation}
\begin{lem}\label{lem-uv}
Assume that a function $V \in \hc$ satisfies \eqref{eq-v}. Then for any point $y \in \Omega$, the following identity
\begin{equation}\label{eq-uv}
\kappa_s \int_{\pa_I B_+^{N+1}((y,0),r)} t^{1-2s} \( \frac{\pa U_n}{\pa \nu} V - \frac{\pa V}{\pa \nu} U_n \) dS_z
= (p-1-\ep_n) \int_{B^N (y,r)} U_n^{p-\ep_n} V \,dx
\end{equation}
holds for any $r \in (0, \textnormal{dist}(y, \pa \Omega))$.
\end{lem}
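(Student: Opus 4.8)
The plan is to derive \eqref{eq-uv} from the weighted divergence theorem applied on the half-ball $B_+^{N+1}((y,0),r)$, treating $U_n$ and $V$ symmetrically. First I would compute $\mathrm{div}\big(t^{1-2s}(U_n \nabla V - V \nabla U_n)\big)$. Because both $U_n$ and $V$ solve $\mathrm{div}(t^{1-2s}\nabla \cdot)=0$ in $\mc \supset B_+^{N+1}((y,0),r)$, this divergence vanishes pointwise in the interior; the cross terms $t^{1-2s}\nabla U_n \cdot \nabla V$ cancel. Hence integrating over $B_+^{N+1}((y,0),r)$ and applying the divergence theorem, the total boundary flux of $t^{1-2s}(U_n\nabla V - V\nabla U_n)$ is zero.

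Next I would split the boundary $\partial B_+^{N+1}((y,0),r)$ into the curved part $\partial_I B_+^{N+1}((y,0),r)$ (the spherical cap in $\mr^{N+1}_+$) and the flat part $B^N(y,r)\times\{0\}$. On the curved part the outward normal is the radial direction $\nu$, contributing exactly the left-hand integrand $t^{1-2s}\big(\frac{\pa V}{\pa\nu}U_n - \frac{\pa U_n}{\pa\nu}V\big)$ — note the sign, since I will move it to the other side. On the flat part $t=0$, the outward normal points in the $-t$ direction, so the flux picks up $\lim_{t\to 0+}t^{1-2s}\big(U_n\,\pa_t V - V\,\pa_t U_n\big)$, which by the definition \eqref{eq-CS} of $\pa_\nu^s$ equals $-\kappa_s^{-1}\big(U_n\,\pa_\nu^s V - V\,\pa_\nu^s U_n\big)$ up to the constant $\kappa_s$. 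Substituting the boundary conditions from \eqref{eq-u-b} and \eqref{eq-v}, namely $\pa_\nu^s U_n = U_n^{p-\ep_n}$ and $\pa_\nu^s V = (p-\ep_n)U_n^{p-1-\ep_n}V$, the flat-part contribution becomes $\kappa_s^{-1}\int_{B^N(y,r)}\big((p-\ep_n)U_n^{p-\ep_n}V - U_n^{p-\ep_n}V\big)dx = \kappa_s^{-1}(p-1-\ep_n)\int_{B^N(y,r)}U_n^{p-\ep_n}V\,dx$. Equating the sum of the two boundary contributions to zero and multiplying through by $\kappa_s$ yields \eqref{eq-uv}.

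The main technical point to be careful about is the justification of the weighted divergence theorem up to the degenerate boundary $\{t=0\}$: the weight $t^{1-2s}$ vanishes (for $s<1/2$) or blows up (for $s>1/2$), so one must check that $U_n,V \in \hc$ together with the regularity of solutions to \eqref{eq-u-b} and \eqref{eq-v} (in particular $U_n$ is bounded and $C^2$ in $x$ with controlled weighted $t$-derivatives near $t=0$, as recorded in Lemma \ref{lem-high-2} and Appendix \ref{sec-app-b}) make the limit $\lim_{t\to0+}t^{1-2s}\pa_t U_n$ and the corresponding one for $V$ exist and the boundary integrals converge absolutely. This is standard once one approximates by integrating over $\{t>\tau\}$ and lets $\tau\to0+$, using that the flux across $\{t=\tau\}$ converges to the conormal term. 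I do not expect any genuine obstacle here, only bookkeeping; the identity is essentially a weighted Green's second identity tailored to the Dirichlet–Neumann extension.
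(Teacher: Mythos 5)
Your proposal is correct and is essentially the paper's own argument: the authors likewise multiply the equation for $U_n$ by $V$ and the equation for $V$ by $U_n$, integrate over the half-ball, and use the weighted Green's second identity together with the conormal derivative $\pa_\nu^s$ on the flat part to produce both sides of \eqref{eq-uv}. Your extra remarks on justifying the divergence theorem up to $\{t=0\}$ via the regularity in Appendix \ref{sec-app-b} are consistent with the paper, which treats this as routine.
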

\begin{proof}
Multiplying the first equation of \eqref{eq-u-b} by $V$ and that of \eqref{eq-v} by $U_n$, and then integrating the results over $B_+^{N+1}((y,0),r)$, we obtain
\begin{align*}
\kappa_s \int_{\pa_I B_+^{N+1}((y,0),r)} t^{1-2s} \( \frac{\pa U_n}{\pa \nu} V - \frac{\pa V}{\pa \nu} U_n \) dS_z
&= - \int_{B^N(y,r)} (\pa_{\nu}^s U_n \cdot V - \pa_{\nu}^s V \cdot U_n)\, dx \\
&= (p-1-\ep_n) \int_{B^N(y,r)} U_n^{p-\ep_n} V \,dx.
\end{align*}
Here the second equality comes from the second equations of \eqref{eq-u-b} and \eqref{eq-v}.
This proves \eqref{eq-uv}.
\end{proof}

Based on the previous identity, we now deduce two kinds of information on the concentration points and rates.
\begin{lem}\label{lem-pt}
For any $1 \le i \le m$ and $1 \le j \le N$, we have $\frac{\pa \mt_i}{\pa x_j}(x_0^i,0)=0$ for $\mathcal{H}_i$ defined in \eqref{eq-mh-2}, or equivalently,
\begin{equation}\label{eq-con}
b_i \frac{\pa H}{\pa x_j} (x_0^i, x_0^i) - \sum_{k \ne i} b_k \frac{\pa G}{\pa x_j} (x_0^i, x_0^k) = 0.
\end{equation}
\end{lem}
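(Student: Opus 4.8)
The plan is to insert $V=\pa U_n/\pa x_j$ into the Green-type identity \eqref{eq-uv} of Lemma \ref{lem-uv} at the point $y=x_0^i$ with a fixed small radius $r\in(0,d_0/2)$ (so that, by Lemma \ref{lem-peak-split}, the closed half-ball $\overline{B^{N+1}_+((x_0^i,0),r)}$ meets no concentration point other than $x_0^i$), then to multiply by $(\lambda_n^1)^{-(N-2s)}$ and pass to the limit $n\to\infty$; recall that $V=\pa U_n/\pa x_j$ indeed solves \eqref{eq-v}. The right-hand side of \eqref{eq-uv} is the easy part: rewriting $U_n^{p-\ep_n}\pa_{x_j}U_n=(p+1-\ep_n)^{-1}\pa_{x_j}\big(U_n^{p+1-\ep_n}\big)$ and applying the divergence theorem on $B^N(x_0^i,r)$ turns it into a boundary integral over $\pa B^N(x_0^i,r)$, which lies in the region $A_r$ of \eqref{eq-A_r}; since Lemma \ref{lem-u-zero} gives $u_n=O((\lambda_n^m)^{(N-2s)/2})$ there while $\lambda_n^m/\lambda_n^1$ stays bounded (Lemma \ref{lem-com}) and $(\lambda_n^m)^{\ep_n}\to1$ (Lemma \ref{lem-apriori}), this term is $O((\lambda_n^1)^N)=o((\lambda_n^1)^{N-2s})$, hence it vanishes after the normalization.

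Turning to the left-hand side, the hemisphere $\pa_I B^{N+1}_+((x_0^i,0),r)$ stays away from every concentration point, so Lemma \ref{lem-u-asym} --- the $C^1$ convergence \eqref{eq-u-ep-asym} together with \eqref{eq-u-ep-asym-2} and \eqref{eq-u-ep-asym-3} for the second $x$-derivatives and the weighted $t$-derivatives --- allows one to pass $(\lambda_n^1)^{-(N-2s)}$ times the left-hand side of \eqref{eq-uv} to $\kappa_s E_r$, where
\[
E_r:=\int_{\pa_I B^{N+1}_+((x_0^i,0),r)}t^{1-2s}\left(\frac{\pa\Psi}{\pa\nu}\,\frac{\pa\Psi}{\pa x_j}-\frac{\pa}{\pa\nu}\Big(\frac{\pa\Psi}{\pa x_j}\Big)\,\Psi\right)dS_z,\qquad \Psi:=c_1\sum_{k=1}^m b_k\,G_{\mc}(\cdot,x_0^k).
\]
By \eqref{eq-lim-u}, on the half-ball $\Psi$ splits into its singular part $\mathcal{G}_i(z):=c_3 b_i\,|z-x_0^i|^{-(N-2s)}=c_1 b_i\,G_{\mr^{N+1}_{+}}(z,x_0^i)$ and the regular function $\mt_i$ of \eqref{eq-mh-2}; both $\mathcal{G}_i$ and $\pa_{x_j}\mathcal{G}_i$ are $s$-harmonic on the open half-ball, and \eqref{eq-mh-1} says the same of $\mt_i$ and $\pa_{x_j}\mt_i$ with, in addition, zero conormal derivative on $B^N(x_0^i,r)$.

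It remains to evaluate $E_r$, which I expand bilinearly using $\Psi=\mathcal{G}_i+\mt_i$. The term quadratic in $\mathcal{G}_i$ vanishes because its integrand is odd under the reflection $x_j\mapsto 2(x_0^i)_j-x_j$, which leaves the hemisphere invariant. The term quadratic in $\mt_i$ vanishes by Green's second identity for $\mathrm{div}(t^{1-2s}\nabla\,\cdot\,)$ on the half-ball: the flat face contributes nothing since $\pa_{\nu}^s\mt_i=\pa_{\nu}^s(\pa_{x_j}\mt_i)=0$, and the hemisphere term is then forced to cancel. For each of the two mixed terms I again invoke Green's identity on the half-ball, between $\mathcal{G}_i$ (resp.\ $\pa_{x_j}\mathcal{G}_i$) and $\pa_{x_j}\mt_i$ (resp.\ $\mt_i$): the hemisphere contribution equals minus the contribution of the flat face $B^N(x_0^i,r)\times\{0\}$, and the latter is read off from $\pa_{\nu}^s\mathcal{G}_i=c_1 b_i\,\delta_{x_0^i}$, $\pa_{\nu}^s(\pa_{x_j}\mathcal{G}_i)=c_1 b_i\,\pa_{x_j}\delta_{x_0^i}$ (by \eqref{eq-prop-G}) and $\pa_{\nu}^s\mt_i=\pa_{\nu}^s(\pa_{x_j}\mt_i)=0$; it equals $-\tfrac{c_1 b_i}{\kappa_s}\,\pa_{x_j}\mt_i(x_0^i,0)$ in both cases, and the potentially singular pairings of $\delta$-type distributions with $\mathcal{G}_i$ never arise because the relevant conormal derivative of $\mt_i$ is zero. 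Hence $E_r=-\tfrac{2c_1 b_i}{\kappa_s}\,\pa_{x_j}\mt_i(x_0^i,0)$, for every admissible $r$.

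Combining the two limits of \eqref{eq-uv} gives $-2c_1 b_i\,\pa_{x_j}\mt_i(x_0^i,0)=0$, and since $c_1,b_i>0$ we get $\pa_{x_j}\mt_i(x_0^i,0)=0$; substituting the definition \eqref{eq-mh-2} of $\mt_i$ and $H(x,y)=H_{\mc}((x,0),y)$, $G(x,y)=G_{\mc}((x,0),y)$ then yields \eqref{eq-con}. The step I expect to be the main obstacle is the passage to the limit on the left-hand side, which demands uniform control of $(\lambda_n^1)^{-(N-2s)/2}U_n$ up to second $x$-derivatives and of its weighted $t$-derivatives in a neighbourhood of the hemisphere --- precisely the content of Lemma \ref{lem-u-asym}, and ultimately of the degenerate-elliptic regularity collected in Appendix \ref{sec-app-b}. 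Once that convergence is in hand, everything else is routine manipulation with Green's identity, the only real point of care being the distributional interpretation of the conormal derivatives of $\mathcal{G}_i$ and $\pa_{x_j}\mathcal{G}_i$ on the flat boundary.
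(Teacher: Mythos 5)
Your proposal is correct, and it follows the paper's proof exactly up to the point where the limiting hemisphere integral $E_r$ must be evaluated: same choice $V=\pa U_n/\pa x_j$ in \eqref{eq-uv}, same divergence-theorem treatment of the right-hand side with Lemmas \ref{lem-apriori}, \ref{lem-u-zero} and \ref{lem-com}, same passage to the limit via Lemma \ref{lem-u-asym}, and the same bilinear splitting $\Psi=\mathcal{G}_i+\mt_i$ with the purely singular term killed by oddness and the purely regular term killed by Green's identity and \eqref{eq-mh-1}.

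Where you genuinely diverge is in the two mixed terms. The paper keeps them as explicit surface integrals, sends $r\to 0$, and computes the limits by homogeneity of the kernel together with a Taylor expansion of $\mt_i$ and the Beta-function identities recorded after the proof; this produces $-\pa_{x_j}\mt_i(x_0^i,0)$ times an explicit positive constant. You instead evaluate them exactly at fixed $r$ by applying Green's second identity on the half-ball and reading off the flat-face contribution from the distributional identities $\pa_{\nu}^s\mathcal{G}_i=c_1b_i\,\delta_{x_0^i}$ and $\pa_{\nu}^s(\pa_{x_j}\mathcal{G}_i)=c_1b_i\,\pa_{x_j}\delta_{x_0^i}$, obtaining $E_r=-\tfrac{2c_1b_i}{\kappa_s}\pa_{x_j}\mt_i(x_0^i,0)$ for every admissible $r$; I checked the signs and the constant, and they are consistent with the paper's Beta-function evaluation. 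Your version is cleaner (no explicit special-function integrals, and the $r$-independence of $E_r$ is a nice sanity check), but be aware of what it buys and what it hides: Green's identity on a domain whose boundary contains the singular point $(x_0^i,0)$ is not literally classical, and its rigorous justification is an excision of a small half-ball of radius $\rho$ around $(x_0^i,0)$ followed by $\rho\to 0$ on the inner hemisphere --- which is precisely the homogeneity-plus-Taylor computation the paper carries out. So the two arguments are ultimately the same calculation packaged differently; yours pushes the analytic work into the (paper-stated) distributional property \eqref{eq-prop-G} of the fundamental solution and its $x$-derivative, and if you wanted the proof to be self-contained you would need to supply that excision limit, at which point you would be rewriting the paper's estimates for $I_1$ and $I_2$.
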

\begin{proof}
Fix any $i \in \{1, \cdots, m\}$. Taking $V = \frac{\pa U_n}{\pa x_j}$ and $y = x_0^i$ in \eqref{eq-uv}, we have
\begin{equation}\label{eq-bi-7}
\begin{aligned}
&\kappa_s \int_{\pa_I B_+^{N+1}((x_0^i,0),r)} t^{1-2s} \left[ \frac{\pa U_n}{\pa \nu} \frac{\pa U_n}{\pa x_j} - \frac{\pa }{\pa \nu}\(\frac{\pa U_n}{\pa x_j}\) U_n \right] dS_z\\
&= (p-1-\ep_n) \int_{B^N (x_0^i,r)} U_n^{p-\ep_n} \frac{\pa U_n}{\pa x_j} \,dx
= \({p-1-\ep_n \over p+1-\ep_n}\) \int_{\pa B^N (x_0^i,r)} U_n^{p+1-\ep_n} \nu_j\, dS_x.
\end{aligned}
\end{equation}
By Lemmas \ref{lem-apriori}, \ref{lem-u-zero} and \ref{lem-com},
\begin{equation}\label{eq-bi-5}
(\lambda_n^1)^{-(N-2s)} \left| \int_{\pa B^N (x_0^i,r)} U_n^{p+1-\ep_n} \nu_j\, dS_x \right| = (\lambda_n^1)^{-(N-2s)} O ((\lambda_n^i)^{N-{N-2s \over 2}\ep_n}) = o(1).
\end{equation}
Hence we see from \eqref{eq-bi-7} and \eqref{eq-bi-5} that
\begin{equation}\label{eq-bi-8}
\lim_{n \to \infty} (\lambda_n^1)^{-(N-2s)} \int_{\pa_I B_+^{N+1}((x_0^i,0),r)} t^{1-2s} \left[ \frac{\pa U_n}{\pa \nu} \frac{\pa U_n}{\pa x_j}
- \frac{\pa }{\pa \nu}\(\frac{\pa U_n}{\pa x_j}\) U_n \right] dS_z = 0.
\end{equation}
Using \eqref{eq-lim-u}, we evaluate the left-hand side of \eqref{eq-bi-8} as follows:
\begin{align*}
&\lim_{n \to \infty} (\lambda_n^1)^{-(N-2s)} \int_{\pa_I B_+^{N+1}((x_0^i,0),r)} t^{1-2s} \left[ \frac{\pa U_n}{\pa \nu} \frac{\pa U_n}{\pa x_j}
- \frac{\pa}{\pa \nu}\(\frac{\pa U_n}{\pa x_j}\) U_n \right] dS_z
\\
&= \int_{\pa_I B_+^{N+1}((x_0^i,0),r)} t^{1-2s} \({(N-2s)c_3 b_i \over |z-x_0^i|^{N-2s+1}} - {\pa \mt_i \over \pa \nu}(z)\)
\cdot \({(N-2s) c_3 b_i (x-x_0^i)_j \over |z-x_0^i|^{N-2s+2}} - {\pa \mt_i \over \pa x_j}(z)\)
\\
&\hspace{65pt} + t^{1-2s} \frac{\pa}{\pa \nu} \({(N-2s)c_3 b_i (x-x_0^i)_j \over |z-x_0^i|^{N-2s+2}} - {\pa \mt_i \over \pa x_j}(z)\) \cdot \({c_3 b_i \over |z-x_0^i|^{N-2s}} + \mt_i(z)\) dS_z
\\
&= \int_{\pa_I B_+^{N+1}((x_0^i,0),r)} t^{1-2s} \left[-{(N-2s)c_3 b_i \over |z-x_0^i|^{N-2s+1}}{\pa \mt_i \over \pa x_j}(z)
- {(N-2s) c_3 b_i (x-x_0^i)_j \over |z-x_0^i|^{N-2s+2}} {\pa \mt_i \over \pa \nu}(z) \right.
\\
&\hspace{100pt} \left. + \frac{\pa}{\pa \nu} \({(N-2s)c_3 b_i (x-x_0^i)_j \over |z-x_0^i|^{N-2s+2}}\) \mt_i(z)\right] dS_z
\\
&\ - \int_{\pa_I B_+^{N+1}((x_0^i,0),r)} t^{1-2s} \left[ \frac{\pa}{\pa \nu}\({\pa \mt_i \over \pa x_j}\) {c_3 b_i \over |z-x_0^i|^{N-2s}} \right] dS_z
\\
&\ + \int_{\pa_I B_+^{N+1}((x_0^i,0),r)} t^{1-2s} \left[ {\pa \mt_i \over \pa \nu} {\pa \mt_i \over \pa x_j} - \frac{\pa}{\pa \nu}\({\pa \mt_i \over \pa x_j}\) \mt_i \right] dS_z
\\
&:= I_1 + I_2 + I_3.
\end{align*}
Let us compute each of the terms $I_1,\, I_2$ and $I_3$.
Firstly, \eqref{eq-mh-1} yields that
\begin{equation}\label{eq-bi-2}
I_3 = - \int_{B^N(x_0^i,r)} \left[ \pa_{\nu}^s \mt_i \cdot \({\pa \mt_i \over \pa x_j}\) - \pa_{\nu}^s\({\pa \mt_i \over \pa x_j}\) \cdot \mt_i \right] dx = 0.
\end{equation}
Also, according to estimates \eqref{eq-prop-h1} and \eqref{eq-prop-h2}, we have
\begin{equation}\label{eq-bi-6}
\begin{aligned}
\lim_{r \to 0} |I_2| &\le \lim_{r\to 0} \int_{\pa_I B_+^{N+1}((x_0^i,0),r)} t^{1-2s} \left| \frac{\pa}{\pa \nu}\({\pa \mt_i \over \pa x_j}\) {c_3 b_i \over |z-x_0^i|^{N-2s}} \right| dS_z
\\
&\leq C \lim_{r \to 0} \int_{\pa_I B_+^{N+1}((x_0^i,0),r)} \frac{(t^{1-2s} +1)}{|z-x_0^i|^{N-2s}} dS_z
\le C \lim_{r \to 0} (r + r^{2s}) = 0.
\end{aligned}
\end{equation}
Therefore we only need to compute $\lim_{r \to 0} I_1$. By homogeneity, its first term is calculated to be
\begin{align*}
&- \lim_{r \to 0} \int_{\pa_I B_+^{N+1}((x_0^i,0),r)} t^{1-2s} {(N-2s)c_3 b_i \over |z-x_0^i|^{N-2s+1}}{\pa \mt_i \over \pa x_j}(z)\, dS_z
\\
&= - {\pa \mt_i \over \pa x_j}(x_0^i, 0) \cdot (N-2s)c_3 b_i \int_{\pa_I B_+^{N+1}(0,1)} \frac{t^{1-2s}}{|z|^{N-2s+1}} dS_z.
\end{align*}
For the second term, one can deduce
\begin{align*}
&- \lim_{r \to 0} \int_{\pa_I B_+^{N+1}((x_0^i,0),r)} t^{1-2s} {(N-2s) c_3 b_i (x-x_0^i)_j \over |z-x_0^i|^{N-2s+2}} {\pa \mt_i \over \pa \nu}(z)\, dS_z
\\
& = - (N-2s) c_3 b_i \cdot \lim_{r \to 0} \int_{\pa_I B_+^{N+1}((x_0^i,0),r)} \sum_{k=1}^{N+1} \frac{t^{1-2s} (x-x_0^i)_j (x-x_0^i)_k}{|z-x_0^i|^{N-2s+3}} {\pa \mt_i \over \pa x_k}(z)\, dS_z
\\
&= - {\pa \mt_i \over \pa x_j}(x_0^i, 0) \cdot (N-2s) c_3 b_i \int_{\pa_I B_+^{N+1}(0,1)} \frac{t^{1-2s} x_j^2}{|z|^{N-2s+3}} \, dS_z,
\end{align*}
because the mean value formula with \eqref{eq-prop-h1} and \eqref{eq-prop-h2} imply
\[\left|\frac{t^{1-2s} (x-x_0^i)_j (x-x_0^i)_k}{|z-x_0^i|^{N-2s+3}} \({\pa \mt_i \over \pa x_k}(z) - {\pa \mt_i \over \pa x_k}(x_0^i, 0)\)\right| \leq C\frac{(1+ t^{1-2s}) |z-x_0^i|^{3}}{|z-x_0^i|^{N-2s+3}} = C \frac{1+t^{1-2s}}{|z-x_0^i|^{N-2s}}\]
for $1 \le j,\, k \le N+1$
so that the value of its integration over the half-sphere ${\pa_I B_+^{N+1}((x_0^i,0),r)}$ is bounded by $C(r+r^{2s})$ (see \eqref{eq-bi-6}).
Finally, by direct computation, we discover
\begin{align*}
&\lim_{r \to 0} \int_{\pa_I B_+^{N+1}((x_0^i,0),r)} t^{1-2s} \frac{\pa}{\pa \nu} {(N-2s)c_3 b_i (x-x_0^i)_j \over |z-x_0^i|^{N-2s+2}} \mt_i(z)\, dS_z
\\
&= -(N-2s) (N-2s+1) c_3 b_i \lim_{r \to 0} \int_{\pa_I B_+^{N+1}((x_0^i,0),r)} t^{1-2s} {(x-x_0^i)_j \over |z-x_0^i|^{N-2s+3}} \mt_i(z)\, dS_z
\\
&= - {\pa \mt_i \over \pa x_j}(x_0^i, 0) \cdot (N-2s) (N-2s+1) c_3 b_i \int_{\pa_I B_+^{N+1}(0,1)} \frac{t^{1-2s} x_j^2}{|z|^{N-2s+3}} \, dS_z
\end{align*}
where we used $\mt_i (x, 0) = \mt_i (x_0^i, 0) + (x-x_0^i) \cdot \nabla_x \mt_i (x_0^i, 0) + O(|x-x_0^i|^2)$ to find the second equality.
Thus \eqref{eq-bi-8} is reduced to
\[- {\pa \mt_i \over \pa x_j}(x_0^i, 0) \cdot \(\int_{\pa_I B_+^{N+1}(0,1)} \frac{t^{1-2s}}{|z|^{N-2s+1}} dS_z
+ (N-2s+2) \int_{\pa_I B_+^{N+1}(0,1)} \frac{t^{1-2s} x_j^2}{|z|^{N-2s+3}} \, dS_z\)  = 0.\]
Therefore ${\pa \mt_i \over \pa x_j}(x_0^i, 0) =0$, proving the lemma.
\end{proof}

\begin{rem}
It is shown in \cite[Section 4]{CKL} that
\begin{equation}\label{eq-beta}
\int_{\pa_I B_+^{N+1}(0,1)} \frac{t^{1-2s}}{|z|^{N-2s+1}} dS_z = {|S^{N-1}| \over 2} B\(1-s, {N \over 2}\)
\end{equation}
and
\[\int_{\pa_I B_+^{N+1}(0,1)} \frac{t^{1-2s} x_1^2}{|z|^{N-2s+3}} dS_z = {|S^{N-1}| \over 2N} B\(1-s, {N+2 \over 2}\) = {1 \over N-2s+2} \int_{\pa_I B_+^{N+1}(0,1)} \frac{t^{1-2s}}{|z|^{N-2s+1}} dS_z\]
where $B$ is the Beta function.
\end{rem}

\begin{lem}\label{lem-con2}
For each $1 \leq i \leq m$ we have
\begin{equation}\label{eq-con2}
b_i^2 H(x_0^i, x_0^i) - \sum_{k \neq i} b_i b_k G(x_0^i, x_0^k) = \frac{ c_2}{2c_1} b_0
\end{equation}
where $c_2 > 0$ in \eqref{eq-c12} and $b_0 = \lim_{n \to \infty} (\lambda_n^1)^{-(N-2s)} \ep_n$.
\end{lem}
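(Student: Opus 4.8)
The plan is to run the scheme of Lemma~\ref{lem-pt} once more, this time feeding the dilation field $V=(z-x_0^i)\cdot\nabla U_n+\frac{2s}{p-1-\ep_n}U_n$ (the second vector field appearing in \eqref{eq-v}) together with $y=x_0^i$ into the Green-type identity \eqref{eq-uv}, for a fixed small $r\in(0,\min\{d_0/2,\,\mathrm{dist}(x_0^i,\pa\Omega)\})$, and then comparing the two sides after multiplying by $(\lambda_n^1)^{-(N-2s)}$ and letting $n\to\infty$.

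First I would evaluate the right-hand side of \eqref{eq-uv}. On $\Omega\times\{0\}$ one has $V=(x-x_0^i)\cdot\nabla_xu_n+\tfrac{2s}{p-1-\ep_n}u_n$, so writing $u_n^{p-\ep_n}(x-x_0^i)\cdot\nabla_xu_n=\tfrac1{p+1-\ep_n}(x-x_0^i)\cdot\nabla_x(u_n^{p+1-\ep_n})$ and integrating by parts over $B^N(x_0^i,r)$ gives, after the cancellation $2s-\tfrac{N(p-1)}{p+1}=0$ (which uses $p=\tfrac{N+2s}{N-2s}$),
\[(p-1-\ep_n)\int_{B^N(x_0^i,r)}U_n^{p-\ep_n}V\,dx=\frac{(p-1-\ep_n)\,r}{p+1-\ep_n}\int_{\pa B^N(x_0^i,r)}u_n^{p+1-\ep_n}\,dS_x+\frac{(N-2s)\ep_n}{p+1-\ep_n}\int_{B^N(x_0^i,r)}u_n^{p+1-\ep_n}\,dx.\]
By Lemmas~\ref{lem-u-zero} and~\ref{lem-com} the boundary term is $O((\lambda_n^1)^{2s})\to0$; by Lemma~\ref{lem-peak-split} only the $i$-th bubble sits in $B^N(x_0^i,r)$, so rescaling around $x_n^i$ and using Corollary~\ref{cor-ms}, dominated convergence and $(\lambda_n^i)^{\ep_n}\to1$ (Lemma~\ref{lem-apriori}) yields $\int_{B^N(x_0^i,r)}u_n^{p+1-\ep_n}dx\to\int_{\mr^N}w^{p+1}dx$; together with $(\lambda_n^1)^{-(N-2s)}\ep_n\to b_0$, this shows that $(\lambda_n^1)^{-(N-2s)}$ times the right-hand side of \eqref{eq-uv} tends to $\tfrac{(N-2s)^2}{2N}\,b_0\int_{\mr^N}w^{p+1}dx$, independently of $r$.

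Next I would treat the left-hand side. By the $C^2$-convergence of Lemma~\ref{lem-u-asym}, $(\lambda_n^1)^{-(N-2s)/2}U_n\to\Psi_i:=c_3b_i|z-x_0^i|^{-(N-2s)}+\mt_i$, whence $(\lambda_n^1)^{-(N-2s)/2}V\to\Phi_i:=-\tfrac{N-2s}{2}c_3b_i|z-x_0^i|^{-(N-2s)}+\big((z-x_0^i)\cdot\nabla\mt_i+\tfrac{N-2s}{2}\mt_i\big)$ (using the homogeneity of degree $-(N-2s)$ of the singular part and $\tfrac{2s}{p-1-\ep_n}\to\tfrac{N-2s}{2}$), both in $C^1$ on $\pa_IB^{N+1}_+((x_0^i,0),r)$. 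Expanding the bilinear surface integral $\kappa_s\int_{\pa_IB^{N+1}_+((x_0^i,0),r)}t^{1-2s}\big(\tfrac{\pa\Psi_i}{\pa\nu}\Phi_i-\tfrac{\pa\Phi_i}{\pa\nu}\Psi_i\big)\,dS_z$ exactly as in Lemma~\ref{lem-pt}: the singular--singular contributions cancel; the regular--regular contribution vanishes by the Green-type identity as in \eqref{eq-bi-2}, because $\mt_i$ and $(z-x_0^i)\cdot\nabla\mt_i+\tfrac{N-2s}{2}\mt_i$ solve \eqref{eq-mh-1}; the cross terms carrying a $t$-derivative or a second $x$-derivative of $\mt_i$ against the singular kernel are $O(r)+O(r^{2s})$ by \eqref{eq-prop-h1}--\eqref{eq-prop-h2}, just like $I_2$ in Lemma~\ref{lem-pt}; and the remaining cross terms, computed by homogeneity as $r\to0$ with \eqref{eq-beta}, total $-\kappa_s(N-2s)^2c_3b_i\,\mt_i(x_0^i,0)\cdot\tfrac{|S^{N-1}|}{2}B(1-s,\tfrac N2)$.

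Finally I would equate the two limits and substitute $\mt_i(x_0^i,0)=-c_1\big(b_iH(x_0^i,x_0^i)-\sum_{k\ne i}b_kG(x_0^i,x_0^k)\big)$ from \eqref{eq-mh-2}, $c_3=c_1\gamma_{N,s}$, $c_1=\int_{\mr^N}w^p$ and $c_2=\tfrac{N-2s}{N}\tfrac{\int_{\mr^N}w^{p+1}}{\int_{\mr^N}w^p}$; the reconciliation is sealed by the elementary Gamma-function identity $\kappa_s\gamma_{N,s}\cdot\tfrac{|S^{N-1}|}{2}B(1-s,\tfrac N2)=\tfrac1{N-2s}$, immediate from the explicit constants in the Notations, and it produces precisely $b_i^2H(x_0^i,x_0^i)-\sum_{k\ne i}b_ib_kG(x_0^i,x_0^k)=\tfrac{c_2}{2c_1}b_0$. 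The main obstacle is the left-hand side: one must first secure the $C^2$-convergence (Lemma~\ref{lem-u-asym}, resting on the regularity of Appendix~\ref{sec-app-b}), then carefully bookkeep the numerous terms of the bilinear boundary integral — controlling in particular the mildly $t$-singular weighted derivatives of $\mt_i$ so that they drop out as $r\to0$ — and confirm that only the terms proportional to $\mt_i(x_0^i,0)$ survive.
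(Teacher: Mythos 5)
Your proposal is correct and follows essentially the same route as the paper: the same dilation field $V=(z-x_0^i)\cdot\nabla U_n+\tfrac{2s}{p-1-\ep_n}U_n$ fed into \eqref{eq-uv}, the same Pohozaev-type integration by parts on the right-hand side (your exact coefficient $\tfrac{(N-2s)\ep_n}{p+1-\ep_n}$ agrees with the paper's asymptotic $\tfrac{(N-2s)^2\ep_n}{2N}(1+o(1))$ after multiplication by $p-1-\ep_n$), and the same $J_1+J_2+J_3$ decomposition of the surface integral with only the term proportional to $\mt_i(x_0^i,0)$ surviving as $r\to0$. The constant reconciliation via \eqref{eq-beta} and $c_3=c_1\gamma_{N,s}$ also matches the paper's computation.
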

\begin{proof}
Fix $i \in \{1, \cdots, m\}$.
Taking $V = V_n = (z-x_0^i) \cdot \nabla U_n + \( \frac{2s}{p-1-\ep_n} \) U_n$ and $y = x_0^i$ in \eqref{eq-uv}, we find
\begin{equation}\label{eq-bi-4}
\begin{aligned}
&\kappa_s \lim_{n \to \infty} (\lambda_n^1)^{-(N-2s)} \int_{\pa_I B_+^{N+1}((x_0^i,0),r)} t^{1-2s} \left[\frac{\pa U_n}{\pa \nu} V_n - \frac{\pa V_n}{\pa \nu} U_n \right] dS_z \\
&\qquad= \lim_{n \to \infty} (\lambda_n^1)^{-(N-2s)} (p-1-\ep_n) \int_{B^N (x_0^i,r)} u_n^{p-\ep_n} v_n\, dx
\end{aligned}
\end{equation}
where $v_n = \text{tr}|_{\Omega \times \{0\}} V_n$.
To evaluate the left-hand side of \eqref{eq-bi-4}, we observe from \eqref{eq-lim-u} that
\[\lim_{n \to \infty} (\lambda_n^1)^{-\frac{N-2s}{2}}V_n (z)
= - \(\frac{N-2s}{2}\) \frac{c_3 b_i}{|z-x_0^i|^{N-2s}} + (z-x_0^i) \cdot \nabla \mt_i (z) + \(\frac{N-2s}{2}\) \mt_i (z)\]
for $z = (x,t) \in \mc' \setminus\{(x_0^1,0), \cdots, (x_0^m,0)\}$.
Thus we get
\begin{align*}
& \lim_{n \to \infty} (\lambda_n^1)^{-(N-2s)} \int_{\pa_I B_+^{N+1}((x_0^i,0),r)} t^{1-2s} \left[ \frac{\pa U_n}{\pa \nu} V_n - \frac{\pa V_n}{\pa \nu} U_n \right] dS_z
\\
&= - \int_{\pa_I B_+^{N+1}((x_0^i,0),r)} t^{1-2s} \frac{(N-2s) c_3b_i}{|z-x_0^i|^{N-2s+1}} \( (z-x_0^i) \cdot \nabla \mt_i + (N-2s) \mt_i\) dS_z
\\
&\ - \int_{\pa_I B_+^{N+1}((x_0^i,0),r)} t^{1-2s} \frac{c_3 b_i }{|z-x_0^i|^{N-2s}} \frac{\pa}{\pa \nu} \((z-x_0^i) \cdot \nabla \mt_i + (N-2s) \mt_i\) dS_z
\\
&\ + \int_{\pa_I B_+^{N+1}((x_0^i,0),r)} t^{1-2s} \left[\frac{\pa \mt_i}{\pa \nu} \((z-x_0^i) \cdot \nabla \mt_i + \(\frac{N-2s}{2}\) \mt_i\) \right.
\\
&\hspace{100pt} - \left.\mt_i \frac{\pa}{\pa \nu} \((z-x_0^i) \cdot \nabla \mt_i + \(\frac{N-2s}{2}\) \mt_i\) \right] dS_z
\\
&:= J_1 + J_2 + J_3.
\end{align*}
As the previous proof, let us estimate each of $J_1,\, J_2$ and $J_3$.
As demonstrated in \eqref{eq-bi-2}, we have $J_3 =0$.
Besides \eqref{eq-prop-h1} and \eqref{eq-prop-h2} lead us to derive
\[\lim_{r \to 0} |J_2| \leq  C \lim_{r \to 0} \int_{\pa_I B_+^{N+1}((x_0^i,0),r)} \frac{(t^{1-2s}+1)}{|z-x_0^i|^{N-2s}} dS_z = 0.\]
Lastly, since
\[\left. \left[(z-x_0^i) \cdot \nabla \mt_i(z) + (N-2s) \mt_i(z) \right] \right|_{z=(x_0^i,0)} = (N-2s) \mt_i(x_0^i,0),\]
we have
\[\lim_{r \to 0} J_1 = -c_3b_i(N-2s)^2 \(\int_{\pa_I B_+^{N+1}(0,1)} \frac{t^{1-2s}}{|z|^{N-2s+1}} dS_z\) \mt_i (x_0^i, 0).\]
As a result, after the limit $r \to 0$ being taken, the left-hand side of \eqref{eq-bi-4} becomes
\begin{equation}\label{eq-bi-9}
c_1c_3\kappa_s (N-2s)^2 \(\int_{\pa_I B_+^{N+1}(0,1)} \frac{t^{1-2s}}{|z|^{N-2s+1}} dS_z\) \left[b_i^2 H_{\mc}((x_0^i,0),x_0^i) - \sum_{k \ne i} b_i b_k G_{\mc}((x_0^i,0),x_0^k)\right].
\end{equation}
Meanwhile, using integration by parts, we deduce that
\begin{align*}
&\int_{B^N (x_i^0,r)} u_n^{p-\ep_n} \left[ (x-x_0^i) \cdot \nabla_x u_n + \( \frac{2s}{p-1-\ep_n} \) u_n \right] dx
\\
&= {1 \over p+1-\ep_n} \int_{B^N (x_i^0,r)} (x-x_0^i) \cdot \nabla_x u_n^{p+1-\ep_n} \, dx
+ \frac{2s}{p-1-\ep_n} \int_{B^N (x_i^0,r)} u_n^{p+1-\ep_n} dx
\\
&= {1 \over p+1-\ep_n} \int_{\pa B^N (x_i^0,r)} (x-x_0^i) \cdot \nu u_n^{p+1-\ep_n} \, dS_x
+ \(\frac{2s}{p-1-\ep_n} - \frac{N}{p+1-\ep_n}\) \int_{B^N (x_i^0,r)} u_n^{p+1-\ep_n} dx.
\end{align*}
Note that
\[\frac{2s}{p-1-\ep_n} - \frac{N}{p+1-\ep_n} = {(N-2s) \ep_n \over \({4s \over N-2s} - \ep_n\) \({2N \over N-2s} - \ep_n\)} = {(N-2s)^3 \ep_n \over 8Ns} (1+o(1))\]
and
\[\int_{\pa B^N (x_i^0,r)} (x-x_0^i) \cdot \nu u_n^{p+1-\ep_n} \, dS_x = O \((\lambda_n^1)^N\).\]
Hence the right-hand side of \eqref{eq-bi-4} equals to
\begin{equation}\label{eq-bi-10}
(\lambda_n^1)^{-(N-2s)} \ep_n (1+o(1)) \cdot {(N-2s)^2 \over 2N} \int_{\mr^N} w^{p+1} dx + O\((\lambda_n^1)^{2s}\).
\end{equation}
From \eqref{eq-bi-4}, \eqref{eq-bi-9}, \eqref{eq-bi-10} and \eqref{eq-beta}, we get
\begin{align*}
{b_0 \over N} \int_{\mr^N} w^{p+1} dx
&= c_1c_3\kappa_s |S^{N-1}| B\(1-s, {N \over 2}\) \left[b_i^2 H_{\mc}((x_0^i,0),x_0^i) - \sum_{k \ne i} b_i b_k G_{\mc}((x_0^i,0),x_0^k)\right]\\
&= {2 \over N-2s} \(\int_{\mr^N} w^p dx\)^2 \left[b_i^2 H(x_0^i,x_0^i) - \sum_{k \ne i} b_i b_k G(x_0^i,x_0^k)\right].
\end{align*}
This completes the proof.
\end{proof}

\medskip
We are now prepared to complete the proof of our main theorems.

\begin{proof}[Proof of Theorem \ref{thm-ch}]
Assume that $\sup_{n \in \mn} \|u_n\|_\mh < \infty$.
Then, if we let $U_n$ be the $s$-harmonic extension of $u_n$ over the half-cylinder $\mc = \Omega \times (0,\infty)$, we have $\sup_{n \in \mn} \|U_n\|_{\hc} < \infty$ by inequality \eqref{eq-U_ep}.
Thus we can apply Lemma \ref{lem-cc-bounded} to the sequence $\{U_n\}_{n \in \mn}$ to deduce the existence of
an integer $m \in \mn \cup \{0\}$ and sequences of positive numbers and points $\{(\lambda_n^i, x_n^i)\}_{n \in \mn} \subset (0,\infty) \times \Omega$ for each $i = 1, \cdots, m$
such that relation \eqref{eq-cc-cond} holds (in particular $\lambda_n^i \to 0$) and
\begin{equation}\label{eq-m1-2}
U_n - \(V_0 + \sum_{i=1}^{m} PW_{\lambda_n^i, x_n^i}\) \to 0
\text{ in } \hc \quad \text{as } n \to \infty
\end{equation}
along a subsequence.
Here $V_0$ is the weak limit of $U_n$ in $\hc$, which is a solution to \eqref{eq-V-eq}, and $PW_{\lambda_n^i, x_n^i}$ is the projected bubble whose definition can be found in \eqref{eq-pw}.

We now split the problem into two cases.

\noindent \textbf{Case 1 ($m=0$).}
By \eqref{eq-CS} and the strong maximum principle, $v_0(x) = V_0(x,0)$ for $x \in \Omega$ satisfies equation \eqref{eq-l}.
In addition, by \eqref{eq-m1-2}, it holds that
\[\lim_{n \to \infty} \|u_n - v\|_{\mh} = \lim_{n \to \infty} \|U_n - V_0\|_{\hc} = 0.\]
This case corresponds to the first alternative $(1)$ of Theorem \ref{thm-ch}.

\noindent \textbf{Case 2 ($m \geq 1$).}
Thanks to Lemma \ref{lem-alt}, we have $V_0 = 0$ in this situation.
Hence \eqref{eq-m1-2} and discussion in Subsection \ref{subsec-cc} give decomposition \eqref{eq-asym} as well as $x_n^i \to x_0^i \in \Omega$.
Also, by Lemmas \ref{lem-peak-split} and \ref{lem-com}, there are constants $d_0,\ C_0 > 0$ independent of $n \in \mn$ such that
\[|x_0^i - x_0^j| \ge d_0 \quad \text{and} \quad \frac{\lambda_n^i}{\lambda_n^j} \le C_0 \quad \text{for any } 1 \le i \ne j \le m.\]
Thus we may set a positive value $b_i =\lim_{n \to \infty} \(\frac{\lambda_n^i}{\lambda_n^1}\)^{\frac{N-2s}{2}}$ for each $1 \leq i \leq m$.
Furthermore, Lemmas \ref{lem-pt} and \ref{lem-con2} imply that $((b_1, \cdots, b_m), (x_0^1, \cdots, x_0^m)) \subset (0,\infty)^m \times \Omega^m$
is a critical point of the function $\Phi_m$ introduced in \eqref{eq-phi}.
We have proved that the case $m \geq 1$ corresponds to the second alternative $(2)$ in Theorem \ref{thm-ch}. The proof is finished.
\end{proof}

\begin{proof}[Proof of Theorem \ref{thm-main2}]
The fact that $M$ is a nonnegative matrix can be shown as in Appendix A of \cite{BLR}, so we left it to the reader.

\medskip
Suppose that $M$ is nondegenerate.
Since the left-hand side of \eqref{eq-con2} is finite, it should hold that $b_0 \in [0, \infty)$.
To the contrary, let us assume that $b_0 = 0$.
Then we see
\[b_i H(x_0^i, x_0^i) - \sum_{k \neq i} b_j G(x_0^i, x_0^k) = 0\]
for each $1 \leq i \leq m$.
It means that $\textbf{b} = (b_1, \cdots, b_m)$ is a nonzero vector such that $M\textbf{b}=0$.
However this is nonsense because the nondegeneracy condition of $M$ tells us that $\textbf{b} = 0$.
Hence $b_0 \neq 0$ should be true, and thus
\[\lim_{n \to \infty} \log_{\ep_n} \lambda_n^i = \lim_{n \to \infty}  \log_{\ep_n} \left[\ep_n^{1 \over N-2s} \(b_0^{-{1 \over N-2s}} + o(1)\) \(b_i^{2 \over N-2s} + o(1)\)\right] = {1 \over N-2s}.\]
The proof is now complete.
\end{proof}

\section{The Restricted Fractional Laplacian and the Classical Laplacian} \label{sec-res}
\subsection{Proof of Theorems \ref{thm-ch} and \ref{thm-main2} for the Restricted Fractional Laplacian}
Here we briefly mention how the proof for the main theorems \ref{thm-ch} and \ref{thm-main2} can be carried out for the restricted fractional Laplacian.

\medskip
First of all, as mentioned before, the Struwe's concentration-compactness principle type result (\textbf{Step 1} in Introduction) can be obtained as in \cite{A, FG, PP}.
Besides the moving plain argument in Section \ref{sec-moving} (corresponding to \textbf{Step 2}) is local in nature, so the same proof as in Section \ref{sec-moving} works.
For Section \ref{sec-point}, one can check each lemma remains valid even if \eqref{eq-ext} is replaced with \eqref{eq-ext-2}.
Finally, we notice that Lemmas \ref{lem-pt} and \ref{lem-con2} were obtained from the information on the solutions $\{U_n\}_{n \in \mn}$ to \eqref{eq-ext} over the half-balls $\{B_+^{N+1}((x_0^i,0),r)\}_{i=1}^m$.
Therefore the same argument goes through for \eqref{eq-ext-2}, completing  \textbf{Step 3}.
Theorems \ref{thm-ch} and \ref{thm-main2} for the restricted fractional Laplacians now follow.

\subsection{Proof of Theorem \ref{thm-local}} \label{subsec-local}
To validate Theorem \ref{thm-local}, we follow the strategy used to prove Theorems \ref{thm-ch} and \ref{thm-main2} for nonlocal problems.

\medskip
The representation formula \eqref{eq-cc-local} of finite energy solutions $\{u_n\}_{n \in \mn}$ to \eqref{eq-local} is due to Struwe \cite{S} (\textbf{Step 1}).
Also, as in \cite[Appendix A]{CKL2}, a moving sphere argument can be applied to deduce a pointwise upper bound of $u_n$.
It implies Lemmas \ref{lem-alt}, \ref{lem-peak-split} and \ref{lem-com} for the local case, which are originally given in \cite{Sch}.
It can be easily seen that Lemma \ref{lem-apriori} remains true, and the local versions of Lemmas \ref{lem-u-zero} and \ref{lem-u-asym} are found in \cite[Section 2]{CKL2}, whence \textbf{Step 2} is finished.
Regarding Lemma \ref{eq-uv}, we have
\begin{lem}
Suppose that a function $v \in H^{1,2}_0(\Omega)$ satisfies
\[- \Delta v = (p-\ep_n)\, u_n^{p-1-\ep_n} v \quad \text{in } \Omega.\]
Then for any point $y \in \Omega$, the following identity
\begin{equation}\label{eq-uv-local}
\int_{\pa B^N(y,r)} \({\pa u \over \pa \nu}v - {\pa v \over \pa \nu}u\) dS_x = (p-1-\ep_n) \int_{B^N(y,r)} u_n^{p-\ep_n} v\, dx
\end{equation}
holds for any $r \in (0, \textnormal{dist}(y,\pa\Omega))$.
\end{lem}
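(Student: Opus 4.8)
The plan is to mimic the proof of Lemma~\ref{lem-uv} in the local setting $s=1$, the role of the weighted Green identity on the half-cylinder now being played by the classical Green's second identity on the Euclidean ball $B^N(y,r)$. First I would record that, by interior elliptic regularity for \eqref{eq-local} (Schauder estimates applied to $-\Delta u_n = u_n^{p-\ep_n}$ with $0<u_n$ bounded), one has $u_n \in C^2(\Omega)$, and then, bootstrapping the linear equation $-\Delta v = (p-\ep_n)\,u_n^{p-1-\ep_n}v$ whose coefficient is H\"older continuous on $\Omega$, that $v\in C^2(\Omega)$ as well. Hence for every $r\in(0,\textnormal{dist}(y,\pa\Omega))$ the closed ball $\overline{B^N(y,r)}$ is contained in $\Omega$ and all integrands in \eqref{eq-uv-local} are continuous up to $\pa B^N(y,r)$, so the integration by parts below is legitimate without any approximation.

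Next I would apply Green's second identity on $B:=B^N(y,r)$ to the pair $(u_n,v)$, where $u=u_n$ denotes the solution of \eqref{eq-local}:
\[
\int_{\pa B}\(\frac{\pa u_n}{\pa\nu}\,v-\frac{\pa v}{\pa\nu}\,u_n\)\,dS_x
=\int_B\big(v\,\Delta u_n-u_n\,\Delta v\big)\,dx .
\]
Substituting $\Delta u_n=-u_n^{p-\ep_n}$ and $\Delta v=-(p-\ep_n)u_n^{p-1-\ep_n}v$ into the right-hand side gives
\[
\int_B\Big(-u_n^{p-\ep_n}v+(p-\ep_n)u_n^{p-1-\ep_n}v\,u_n\Big)\,dx
=(p-1-\ep_n)\int_B u_n^{p-\ep_n}v\,dx,
\]
which is precisely \eqref{eq-uv-local}. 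This is the entire argument; there is no genuine obstacle beyond the routine verification that $u_n$ and $v$ are regular enough in $\Omega$ to justify the two integrations by parts, which the interior elliptic regularity mentioned above supplies.

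Finally I would remark that \eqref{eq-uv-local} is the exact local counterpart of \eqref{eq-uv}: as $s\to1$ the surface integral over the half-sphere $\pa_I B_+^{N+1}((y,0),r)$ weighted by $t^{1-2s}$ collapses to the ordinary surface integral over $\pa B^N(y,r)$ and the factor $\kappa_s$ disappears. Consequently the subsequent computations of Section~\ref{sec-bi} — in particular the proofs of Lemmas~\ref{lem-pt} and \ref{lem-con2}, where $v$ is taken to be $\pa u_n/\pa x_j$ or $(x-x_0^i)\cdot\nabla u_n+\frac{2s}{p-1-\ep_n}u_n$ with $s=1$ — carry over verbatim once \eqref{eq-uv-local} is available.
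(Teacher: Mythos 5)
Your proposal is correct and follows exactly the route the paper intends: the paper states this local lemma without proof as the $s=1$ counterpart of Lemma \ref{lem-uv}, whose proof is precisely the multiplication-and-integration-by-parts (i.e.\ Green's second identity) argument you carry out on $B^N(y,r)$. The substitution of $\Delta u_n=-u_n^{p-\ep_n}$ and $\Delta v=-(p-\ep_n)u_n^{p-1-\ep_n}v$ and the interior regularity justification are all in order.
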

\noindent By taking $u = u_n$ and $v = {\pa u_n \over \pa x_j}$ for $j = 1, \cdots, N$ or $v = (x-x_0^i) \cdot \nabla u_n + \({2 \over p-1-\ep_n}\) u_n$ for $i = 1, \cdots, m$ in \eqref{eq-uv-local},
we get Lemmas \ref{lem-pt} and \ref{lem-con2} where the constants $c_1$ and $c_2$ are given by \eqref{eq-c12} with $s = 1$.
Thus \textbf{Step 3} is done.
Putting all the results together, we complete the proof of Theorem \ref{thm-local}.

\appendix
\section{Lower and Upper Estimates of the Standard Bubble in $\mr^{N+1}_+$} \label{sec-W}
Here we shall prove a decay estimate of $W_{\lambda,0}$, which is necessary in applying the moving sphere argument (see Section \ref{sec-moving}).
\begin{lem}\label{lem-app}
Then for any $\eta > 0$ there exists $R = R(\eta) > 1$ so large that
\begin{equation} \label{eq-app}
\alpha_{N,s} (1-\eta) \lambda^{N-2s \over 2} |z|^{-(N-2s)}  \leq W_{\lambda,0} (z) \leq \alpha_{N,s} (1+\eta) \lambda^{N-2s \over 2} |z|^{-(N-2s)} \quad \text{for all } |z| > R
\end{equation}
where $\alpha_{N,s} > 0$ is the constant defined in Notations.
\end{lem}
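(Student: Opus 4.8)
The plan is to reduce the statement to the model scale $\lambda = 1$ and then to read off the precise decay constant from an explicit integral representation of the extension. From \eqref{eq-bubble} one checks directly that $w_{\lambda,0}(x) = \lambda^{-(N-2s)/2} w_{1,0}(x/\lambda)$, whence, by the uniqueness of the $s$-harmonic extension and the scaling invariance of \eqref{wlyxt}, $W_{\lambda,0}(z) = \lambda^{-(N-2s)/2} W_{1,0}(z/\lambda)$ on $\mr^{N+1}_+$. Thus \eqref{eq-app} for $W_{\lambda,0}$ at a point $z$ is equivalent to the same bound for $W := W_{1,0}$ at $z/\lambda$, and (since the scale $\lambda$ stays bounded in all uses of the lemma in Section~\ref{sec-moving}) it suffices to show that for every $\eta > 0$ there is $R_0(\eta) > 1$ with $\alpha_{N,s}(1-\eta)|\zeta|^{-(N-2s)} \le W(\zeta) \le \alpha_{N,s}(1+\eta)|\zeta|^{-(N-2s)}$ for $|\zeta| > R_0$. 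To this end, I would use that $W$ is $s$-harmonic in $\mr^{N+1}_+$ with $\pa_\nu^s W = W^p = w_{1,0}^p$ on $\mr^N$, so that the Green's representation on the half-space together with \eqref{eq-Green-R}--\eqref{eq-prop-G} gives
\begin{equation}\label{eq-W-rep-plan}
W(\zeta) = \gamma_{N,s}\int_{\mr^N} \frac{w_{1,0}^p(y)}{|\zeta-(y,0)|^{N-2s}}\, dy, \qquad \zeta=(x,t)\in\mr^{N+1}_+ .
\end{equation}
Letting $t=0$ and $|x|\to\infty$ in \eqref{eq-W-rep-plan} and comparing with the explicit large-$|x|$ behaviour of $w_{1,0}$ in \eqref{eq-bubble} identifies the normalization $\gamma_{N,s}\int_{\mr^N} w_{1,0}^p = \alpha_{N,s}$. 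Hence the claim is equivalent to $I(\zeta) := \int_{\mr^N}\bigl[\,|\zeta|^{N-2s}|\zeta-(y,0)|^{-(N-2s)} - 1\,\bigr]\, w_{1,0}^p(y)\, dy \to 0$ as $|\zeta|\to\infty$.

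I would estimate $I(\zeta)$ by splitting the $y$-integral over $\{|y|\le|\zeta|/2\}$ and $\{|y|>|\zeta|/2\}$. On the far region, the contribution of the term $-1$ tends to $0$ because $w_{1,0}^p\in L^1(\mr^N)$, and the remaining term is handled with the pointwise decay $w_{1,0}^p(y)\le C|y|^{-(N+2s)}$, distinguishing $|x-y|>|y|/2$ (where $|\zeta-(y,0)|\ge|x-y|>|y|/2$ gives $C|\zeta|^{N-2s}\int_{|y|>|\zeta|/2}|y|^{-2N}dy = O(|\zeta|^{-2s})$) from $|x-y|\le|y|/2$ (where $|y|$ is comparable to $|x|$, hence to $|\zeta|$, so $w_{1,0}^p(y)\le C|\zeta|^{-(N+2s)}$ on that set while $\int_{|x-y|\le|y|/2}|\zeta-(y,0)|^{-(N-2s)}dy = O(|\zeta|^{2s})$, again $O(|\zeta|^{-2s})$). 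On the near region $\{|y|\le|\zeta|/2\}$ the ratio $|\zeta|/|\zeta-(y,0)|$ lies in a fixed compact subinterval of $(0,\infty)$, so a second-order Taylor expansion of $a\mapsto a^{N-2s}$ around $a=1$ yields $\bigl|\,|\zeta|^{N-2s}|\zeta-(y,0)|^{-(N-2s)} - 1 - (N-2s)\,x\cdot y/|\zeta|^2\,\bigr|\le C|y|^2/|\zeta|^2$; as $w_{1,0}^p$ is radial, the odd term $x\cdot y/|\zeta|^2$ integrates to zero over the centred ball $\{|y|\le|\zeta|/2\}$, leaving a contribution $\le C|\zeta|^{-2}\int_{|y|\le|\zeta|/2}|y|^2 w_{1,0}^p(y)\,dy = O(|\zeta|^{-2s})$ for every $s\in(0,1)$. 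Adding the two regions gives $I(\zeta)=O(|\zeta|^{-2s})\to0$, which proves the two-sided bound for $W$ and hence, by the scaling reduction, the lemma.

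The main obstacle is precisely this treatment of the near region in low regularity: for $s\le 1/2$ the integral $\int_{\mr^N}|y|\,w_{1,0}^p(y)\,dy$ diverges, so one cannot afford the first-order Taylor term, and the argument genuinely relies on the radial symmetry of $w_{1,0}^p$ to cancel it and descend to the convergent second-order estimate. Everything else --- the two-sided nature of the estimate (the same computation gives both the upper and the lower inequality), the verification of \eqref{eq-W-rep-plan}, and the domain splittings --- is routine, if a little tedious. An alternative to \eqref{eq-W-rep-plan} is to argue from the Poisson formula \eqref{eq-Poi} for $W$, but the Green's-function representation is cleaner for isolating the constant $\alpha_{N,s}$.
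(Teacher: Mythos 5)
Your proof is correct, and it starts from the same two ingredients as the paper's: the half-space Green representation $W(\zeta)=\gamma_{N,s}\int_{\mr^N}w_{1,0}^p(y)\,|\zeta-(y,0)|^{-(N-2s)}\,dy$ and the normalization $\gamma_{N,s}\int_{\mr^N}w_{1,0}^p=\alpha_{N,s}$, after the same reduction to $\lambda=1$ (your caveat that the reduction only yields an $R$ uniform for bounded $\lambda$ is fair, and is glossed over in the paper). Where you diverge is in how the kernel is compared to $|\zeta|^{-(N-2s)}$. The paper argues softly: for the lower bound it simply discards the integral outside $\{|y|\le\delta|x|\}$ (or $\{|y|\le\delta|t|\}$, splitting into the cases $|x|>\delta|t|$ and $|x|\le\delta|t|$) and uses that on this set the kernel is at least $(1+\delta)^{-(N-2s)}|\zeta|^{-(N-2s)}$ (resp.\ $(1+2\delta)^{-(N-2s)}$); for the upper bound it uses the matching $(1-\delta)^{-(N-2s)}$ comparison on the near set and bounds the complementary contribution by $O(\delta^{-2(N+s)}|\zeta|^{-N})$. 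You instead prove a genuine asymptotic, $I(\zeta)=O(|\zeta|^{-2s})$, via a second-order Taylor expansion of the kernel ratio on $\{|y|\le|\zeta|/2\}$, exploiting the radial symmetry of $w_{1,0}^p$ to cancel the first-order term $(N-2s)\,x\cdot y/|\zeta|^2$ (which, as you correctly note, is essential for $s\le 1/2$ where $\int|y|\,w_{1,0}^p\,dy$ diverges). The paper's $(1\pm\delta)$ kernel comparison sidesteps this low-regularity issue entirely, since it never isolates a first-order term; your argument buys an explicit rate of convergence, which the lemma does not actually require. One cosmetic point: in your far-region sub-case $|x-y|\le|y|/2$, the justification should simply be $|y|>|\zeta|/2$ (so $w_{1,0}^p(y)\le C|\zeta|^{-(N+2s)}$) rather than the comparability of $|x|$ and $|\zeta|$, which can fail when $t\gg|x|$; the ensuing bound $\int_{|x-y|\le|y|/2}|\zeta-(y,0)|^{-(N-2s)}dy\le C|x|^{2s}\le C|\zeta|^{2s}$ still goes through.
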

\begin{proof}
Since $W_{\lambda,0}(z) = \lambda^{-{N-2s \over 2}} W(\lambda^{-1}z)$, we may assume that $\lambda = 1$.
Let us prove the lower estimate first.
Taking a small number $\delta > 0$ to be determined later, we consider two exclusive cases: (1) $|x| > \delta |t|$ and (2) $|x| \leq \delta |t|$.

For the case (1), we see from Green's representation formula, \eqref{eq-Green-R} and \eqref{eq-bubble} that
\begin{equation}\label{eq-U-sharp-1}
\begin{aligned}
W(x,t) &\geq \alpha_{N,s}^p \gamma_{N,s} \int_{|y| \leq \delta |x|} \frac{1}{|(x-y,t)|^{N-2s}} \frac{1}{(1+|y|^2)^{N+2s \over 2}} dy
\\
&\geq \frac{1}{|((1+\delta)x,t)|^{N-2s}} \cdot \alpha_{N,s}^p \gamma_{N,s} \int_{|y| \leq \delta |x|} \frac{1}{(1+|y|^2)^{N+2s \over 2}} dy
\\
&\geq \frac{1}{(1+\delta)^{N-2s}|(x,t)|^{N-2s}}\(\alpha_{N,s}^p \gamma_{N,s} \int_{\mr^N} \frac{1}{(1+|y|^2)^{\frac{N+2s}{2}}} dy - o(1)\)
\\
&= \frac{1}{(1+\delta)^{N-2s}|(x,t)|^{N-2s}}\(\alpha_{N,s} - o(1)\)
\end{aligned}
\end{equation}
where $o(1) \to 0$ as $|z| = |(x,t)| \to \infty$.

For the case (2), we have
\begin{equation}\label{eq-U-sharp-2}
\begin{aligned}
W(x,t) & \ge \alpha_{N,s}^p \gamma_{N,s} \int_{|y| \leq \delta |t|} \frac{1}{|(x-y,t)|^{N-2s}} \frac{1}{(1+|y|^2)^{N+2s \over 2}} dy
\\
& \ge \frac{1}{(1+2\delta)^{N-2s}|t|^{N-2s}} \cdot  \alpha_{N,s}^p \gamma_{N,s} \int_{|y| \leq \delta |t|} \frac{1}{(1+|y|^2)^{N+2s \over 2}} dy
\\
& \ge \frac{1}{(1+2\delta)^{N-2s}|(x,t)|^{N-2s}} (\alpha_{N,s} - o(1))
\end{aligned}
\end{equation}
where $o(1) \to 0$ as $|z| = |(x,t)| \to \infty$.

Hence if we choose $\delta > 0$ small and $R > 0$ large so that
\[{1 \over (1+2\delta)^{N-2s}} \ge 1-{\eta \over 2} \quad \text{and} \quad \alpha_{N,s} - o(1) \ge \(1-{\eta \over 2}\) \alpha_{N,s},\]
we obtain the desired estimate from \eqref{eq-U-sharp-1} and \eqref{eq-U-sharp-2}.

\medskip
We turn to prove the upper estimate. Again, we take into account the cases (1) $|x| > \delta |t|$ and (2) $|x| \leq \delta |t|$ separately.

For the case (1), we estimate
\[\alpha_{N,s}^p \gamma_{N,s} \int_{|y| \leq \delta |x|} \frac{1}{|(x-y,t)|^{N-2s}} \frac{1}{(1+|y|^2)^{N+2s \over 2}} dy
\leq \frac{\alpha_{N,s}}{|((1-\delta) x, t)|^{N-2s}} \leq \frac{1}{(1-\delta)^{N-2s}} \frac{\alpha_{N,s}}{|(x,t)|^{N-2s}}\]
and
\begin{align*}
&\ \alpha_{N,s}^p \gamma_{N,s} \int_{|y| \geq \delta |x|} \frac{1}{|(x-y,t)|^{N-2s}} \frac{1}{(1+|y|^2)^{N+2s \over 2}} dy \\
&= \alpha_{N,s}^p \gamma_{N,s} \(\int_{2|x| \geq |y| \geq \delta |x|} + \int_{|y|\geq 2|x|}\) \frac{1}{|(x-y,t)|^{N-2s}} \frac{1}{(1+|y|^2)^{N+2s \over 2}} dy
\\
& \leq \alpha_{N,s}^p \gamma_{N,s} \( \int_{2|x| \geq |y| \geq \delta |x|} \frac{1}{|x-y|^{N-2s}} \frac{1}{(\delta |x|)^{N+2s}} dy
+ \int_{|y| \geq 2|x|} \frac{1}{|x|^{N-2s}} \frac{1}{|(1+|y|^2)^{\frac{N+2s}{2}}} dy \)
\\
&\leq \frac{\alpha_{N,s}'}{\delta^{N+2s}|x|^N} \leq \frac{2^{N/2} \alpha_{N,s}'}{\delta^{2(N+s)}|(x,t)|^N},
\end{align*}
where $\alpha_{N,s}' > 0$ is a certain constant relying only on $N$ and $s$.
Observe that the last inequality came from $|(x,t)| < \sqrt{1+\delta^{-2}} |x| \le \sqrt{2} \delta^{-1}|x|$ for $\delta > 0$ small enough.
Combining the above estimates, we get
\begin{equation}\label{eq-U-sharp-3}
W(x,t) \leq \frac{1}{(1-\delta)^{N-2s}} \frac{\alpha_{N,s}}{|(x,t)|^{N-2s}} + \frac{2^{N/2} \alpha_{N,s}'}{\delta^{2(N+s)}|(x,t)|^N}.
\end{equation}
For the case (2), we have
\begin{equation}\label{eq-U-sharp-4}
W(x,t) \leq \alpha_{N,s}^p \gamma_{N,s} \int_{\mr^N} \frac{1}{|t|^{N-2s}} \frac{1}{(1+|y|^2)^{N+2s \over 2}} dy  = \frac{\alpha_{N,s}}{|t|^{N-2s}} \leq (1+\delta)^{N-2s} \frac{\alpha_{N,s}}{|(x,t)|^{N-2s}}.
\end{equation}
Consequently, with the choices
\[{1 \over (1-\delta)^{N-2s}} \le 1 + {\eta \over 2} \quad \text{and} \quad \frac{2^{N/2} \alpha_{N,s}'}{\delta^{2(N+s)}R^N} \le {\eta \over 2},\]
estimates \eqref{eq-U-sharp-3} and \eqref{eq-U-sharp-4} imply the second inequality of the lemma. The proof is completed.
\end{proof}

\section{Elliptic Regularity Results and Derivation of \eqref{eq-u-ep-asym-2} and \eqref{eq-u-ep-asym-3}} \label{sec-app-b}
This section is devoted to present some elliptic regularity results and its application to justification of \eqref{eq-u-ep-asym-2} and \eqref{eq-u-ep-asym-3}.
For brevity, we denote
\[Q_r = B^{N+1}_+((x,0), r) \quad \text{and} \quad B_r = B^{N}(x,r) \quad \text{for any fixed } x \in \Omega, 0 < r < \text{dist}(x, \pa \Omega)/2.\]
Also $\pa_i = \pa_{x_i}$ for $1 \le i \le N$.

\medskip
We need to recall two lemmas which can be proved with Moser's iteration method.
One is an a priori $L^{\infty}$-estimate. See e.g. \cite[Lemma 3.8]{CKL}, \cite[Theorem 3.4]{GQ} and \cite[Propositions 2.3, 2.6]{JLX}.
\begin{lem}\label{lem-high-2}
Let $U \in H^{1,2}_0(Q_{2r};t^{1-2s})$ be a weak solution to
\[\begin{cases}
\textnormal{div}(t^{1-2s} \nabla U) =0 &\text{in } Q_{2r},\\
\pa_{\nu}^s U = a U + f &\text{on } B_{2r}
\end{cases}\]
and assume that $\|a\|_{L^{\frac{N}{2s}}(B_{2r})} < \delta$ for a small value $\delta = \delta (N,s)>0$.
If $f \in L^q(B_r)$ for some $q > {n \over 2s}$ and $\theta \in (0,1)$, then we have
\[\| U\|_{L^{\infty}(Q_{\theta r})}^2 + \int_{Q_{\theta r}} t^{1-2s} |\nabla  U|^2 dz \leq C \( \int_{Q_r} t^{1-2s} |U|^2 dz + \|f\|_{L^q (B_r)}^2\)\]
for some $C = C(N,s,r,\theta) > 0$.
\end{lem}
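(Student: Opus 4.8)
The statement is a De Giorgi--Nash--Moser type $L^\infty$ estimate for the degenerate elliptic operator $\textnormal{div}(t^{1-2s}\nabla\,\cdot\,)$, whose weight $t^{1-2s}$ is a Muckenhoupt $A_2$ weight, supplemented with the conormal (Robin-type) boundary condition $\pa_{\nu}^s U=aU+f$ on the flat face $B_{2r}$. The plan is to run a Moser iteration on a shrinking family of half-balls; the only genuinely delicate point will be the treatment of the \emph{critical} zeroth order term $aU$, which is exactly where the smallness hypothesis $\|a\|_{L^{N/2s}(B_{2r})}<\delta$ is used. As a preliminary I would record the two functional inequalities that drive the iteration: the weighted Sobolev inequality $\big(\int_{Q_\rho}t^{1-2s}|V|^{2\chi}\big)^{1/\chi}\le C\int_{Q_\rho}t^{1-2s}|\nabla V|^2$ for $V\in H^{1,2}_0(Q_\rho;t^{1-2s})$, where $2\chi$, $\chi=\chi(N,s)>1$, is the weighted Sobolev exponent (Fabes--Kenig--Serapioni), and the trace--Sobolev inequality $\big(\int_{B_\rho}|V(\cdot,0)|^{2N/(N-2s)}\big)^{(N-2s)/N}\le C\int_{Q_\rho}t^{1-2s}|\nabla V|^2$, which is just the extension form of $H^s\hookrightarrow L^{2N/(N-2s)}$.

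\textbf{Energy inequality.} For $\beta\ge1$ and $L>0$ I would test the weak formulation with $\varphi=\eta^2\,U\,U_L^{2(\beta-1)}$, where $U_L=\min(|U|,L)$ and $\eta$ is a cutoff with $\eta\equiv1$ on $Q_{\rho'}$ and $\mathrm{supp}\,\eta\subset Q_\rho$, $\theta r\le\rho'<\rho\le r$. The truncation $U_L$ is forced by the requirement $\varphi\in H^{1,2}_0(Q_\rho;t^{1-2s})$, and the untruncated conclusion is recovered by letting $L\to\infty$ at the end (monotone convergence / Fatou). Writing $w=|U|\,U_L^{\beta-1}$, expanding $\nabla\varphi$ and applying Young's inequality to the cross term gives a Caccioppoli-type bound
\[
\int_{Q_\rho}t^{1-2s}|\nabla(\eta w)|^2\le \frac{C\beta^2}{(\rho-\rho')^2}\int_{Q_\rho}t^{1-2s}w^2+C\beta\int_{B_\rho}\big(|a|\,w^2+|f|\,|U|\,U_L^{2(\beta-1)}\big)\eta^2.
\]

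\textbf{Absorbing the critical term and iterating.} By Hölder and the trace--Sobolev inequality, $\int_{B_\rho}|a|\,\eta^2 w^2\le\|a\|_{L^{N/2s}(B_{2r})}\|\eta w\|_{L^{2N/(N-2s)}(B_\rho)}^2\le C\|a\|_{L^{N/2s}(B_{2r})}\int_{Q_\rho}t^{1-2s}|\nabla(\eta w)|^2$; choosing $\delta=\delta(N,s)$ small this is absorbed into the left-hand side. The $f$-term is \emph{subcritical}: $q>N/2s$ is equivalent to the Hölder conjugate satisfying $q'<N/(N-2s)$, so $\int_{B_\rho}|f|\,\eta^2|U|\,U_L^{2(\beta-1)}$ is controlled, again via Hölder and the trace inequality, by a small multiple of $\int_{Q_\rho}t^{1-2s}|\nabla(\eta w)|^2$ plus $C\beta^2\|f\|_{L^q(B_r)}^2$ and a lower-order $L^2$ term. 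Feeding the resulting inequality into the weighted Sobolev inequality yields a reverse-Hölder step $\big(\int_{Q_{\rho'}}t^{1-2s}w^{2\chi}\big)^{1/\chi}\le \frac{C\beta^2}{(\rho-\rho')^2}\int_{Q_\rho}t^{1-2s}w^2+C\beta^2\|f\|_{L^q(B_r)}^2$, i.e., after $L\to\infty$, an estimate of $\|U\|_{L^{2\beta\chi}}$ by $\|U\|_{L^{2\beta}}$. Iterating along $\beta=\chi^k$ with radii $\rho_k\downarrow\theta r$, the product $\prod_k(C\chi^{2k})^{\chi^{-k}}$ converges and delivers $\|U\|_{L^\infty(Q_{\theta r})}\le C(\|U\|_{L^2(Q_r;t^{1-2s})}+\|f\|_{L^q(B_r)})$; squaring and adding the energy inequality at $\beta=1$ (which already gives $\int_{Q_{\theta r}}t^{1-2s}|\nabla U|^2\le C(\int_{Q_r}t^{1-2s}U^2+\|f\|_{L^q(B_r)}^2)$) produces the stated inequality.

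\textbf{Main obstacle.} The delicate step is the absorption of the critical term $aU$: it sits exactly at the Lebesgue scale $L^{N/2s}$, so it cannot be pushed into a subcritical space and must instead be absorbed, and one must make this legitimate uniformly along the whole iteration with the final constant depending only on $N,s,r,\theta$ (and on the data $\|U\|_{L^2}$, $\|f\|_{L^q}$). The clean remedy is either to split $a=a\mathbf 1_{\{|a|\le K\}}+a\mathbf 1_{\{|a|>K\}}$, absorbing the tail (whose $L^{N/2s}$-norm is as small as the step requires) and treating the bounded piece $K\int\eta^2 w^2$ as a genuinely lower-order term via interpolation between $L^1(B_\rho)$ and $L^{2N/(N-2s)}(B_\rho)$, or to precede the Moser iteration by a Brezis--Kato step putting $U\in L^p(Q_r)$ for every finite $p$. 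Everything else — the energy identity, the weighted Sobolev and trace inequalities, the bookkeeping of constants in the iteration, and the passage $L\to\infty$ — is routine.
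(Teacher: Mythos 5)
The paper itself does not prove this lemma: it is recorded as a known a priori bound obtainable ``with Moser's iteration method'' and is attributed to \cite{CKL}, \cite{GQ} and \cite{JLX}. Your proposal follows precisely that route --- truncated powers $U U_L^{\beta-1}$ as test functions, the Fabes--Kenig--Serapioni Sobolev inequality for the $A_2$ weight $t^{1-2s}$ together with the trace inequality, absorption of the critical boundary term $aU$ using $\|a\|_{L^{N/2s}}<\delta$, and a subcritical treatment of $f$ via $q>N/(2s)$ --- so in method you coincide with what the paper intends.

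The unresolved point is the one you flag yourself, and as written it is a genuine gap with respect to the stated conclusion $C=C(N,s,r,\theta)$. The critical term enters your Caccioppoli inequality with the factor $C\beta$, so the absorption $C\beta\,\|a\|_{L^{N/2s}}\|\eta w\|_{L^{2N/(N-2s)}}^2\le \tfrac12\int t^{1-2s}|\nabla(\eta w)|^2$ with one fixed $\delta=\delta(N,s)$ fails once $\beta\gtrsim \delta^{-1}$, and H\"older offers no slack because the exponent pair $(N/2s,\,N/(N-2s))$ is rigid: no amount of extra integrability of $\mathrm{tr}\,U$ makes $aU$ subcritical when $a$ lies only in $L^{N/2s}$, so the critical term must be handled at every stage of the iteration, not just the first. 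Both remedies you then propose surrender the uniformity of the constant: the truncation level $K$ needed so that $\beta\|a\mathbf 1_{\{|a|>K\}}\|_{L^{N/2s}}$ is small depends on the distribution function of $a$ and not merely on the bound $\|a\|_{L^{N/2s}}<\delta$, and a preliminary Brezis--Kato step likewise yields $L^p$ bounds whose constants depend on $a$. So either you accept a constant depending on $a$ itself --- which would then have to be re-examined in the paper's application (Lemma \ref{lem-alt}, where $a=a_n$ varies along the blow-up sequence and only its $L^{N/2s}$-norm is controlled uniformly) --- or you must supply the additional mechanism that restores uniformity, which is exactly what one is importing by citing \cite{JLX} and \cite{CKL} rather than reproving the estimate. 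As it stands, the sketch establishes the scheme of the proof but not the statement with the asserted dependence of $C$.
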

\noindent The other is a result on H\"older estimates. Refer to \cite[Proposition 2.6]{JLX} and \cite[Lemma 4.5]{CS2}.
\begin{lem}\label{lem-high-1}
Let $U \in H^{1,2}_0(Q_{2r};t^{1-2s})$ be a weak solution to
\[\begin{cases}
\textnormal{div}(t^{1-2s} \nabla U) =0 &\text{in } Q_{2r},\\
\pa_{\nu}^s U = f &\text{on } B_{2r},
\end{cases}\]
and $\theta \in (0,1)$.

\noindent (1) If $f \in L^q (B_r)$ for some $q > \frac{n}{2s}$, then for some $\alpha \in (0,1)$ we have
\[\|U\|_{C^{\alpha}(Q_{\theta r})} \leq C\(\| U\|_{L^{\infty}(Q_r)} + \|f\|_{L^q (B_r)}\).\]

\noindent (2) If $f \in C^{\beta} (B_r)$ for some $\beta \in (0,1)$, then there exists $\alpha \in (0,1)$ such that
\[\| t^{1-2s} \pa_t U \|_{C^{\alpha}(Q_{\theta r})} \leq C \(\| U\|_{L^{\infty}(Q_r)} + \|f\|_{C^{\beta}(B_r)}\).\]
\end{lem}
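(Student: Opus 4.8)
The plan is to obtain both estimates from the De Giorgi--Nash--Moser regularity theory for degenerate elliptic operators with a Muckenhoupt $A_2$ weight (in the spirit of Fabes--Kenig--Serapioni), adapted to the mixed boundary conditions of the extension problem --- complete versions being in \cite[Proposition 2.6]{JLX} and \cite[Lemma 4.5]{CS2}. For part (1), I would first subtract off the contribution of the boundary source $f$: fix a cut-off $\eta$ supported in $B_r$ that equals $1$ near $\overline{B_{\theta r}}$ and set
\[w(x,t):=\int_{\mr^N}\frac{\gamma_{N,s}}{|(x-y,t)|^{N-2s}}\,(\eta f)(y)\,dy,\]
which by \eqref{eq-Green-R}--\eqref{eq-prop-G} is $s$-harmonic in $\mr^{N+1}_+$ and satisfies $\pa_{\nu}^s w=\eta f$ on $\mr^N$. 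Its trace $w(\cdot,0)$ is, up to a constant, the Riesz potential of order $2s$ of $\eta f$; since $q>N/(2s)$, the Hardy--Littlewood--Sobolev and Morrey inequalities give $w(\cdot,0)\in C^{2s-N/q}(\mr^N)$ with norm controlled by $\|f\|_{L^q(B_r)}$, and because $w$ is then the $s$-harmonic extension of a H\"older function, the Poisson representation \eqref{eq-Poi} together with the decay of its kernel yields $w\in C^{\alpha}(\overline{Q_{\theta' r}})$ for some $\alpha\in(0,1)$ and $\theta<\theta'<1$, with the same bound. On the region where $\eta\equiv1$ the difference $U-w$ is $s$-harmonic with the homogeneous Neumann condition $\pa_{\nu}^s(U-w)=0$; since $t^{1-2s}$ is $A_2$ for $0<s<1$, the boundary oscillation estimate gives $\|U-w\|_{C^{\alpha}(Q_{\theta r})}\le C\|U-w\|_{L^\infty(Q_{\theta' r})}$, and adding $w$ back, with $\|U-w\|_{L^\infty(Q_{\theta' r})}\le\|U\|_{L^\infty(Q_r)}+C\|f\|_{L^q(B_r)}$, would complete (1).

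For part (2), note that $f\in C^\beta(B_r)\subset L^q(B_r)$ for every $q<\infty$, so part (1) already yields $U\in C^{\alpha_0}$ near $\{t=0\}$. The key device would be the conjugate equation: a direct computation from $\text{div}(t^{1-2s}\nabla U)=0$ shows that $V:=t^{1-2s}\pa_t U$ satisfies $\text{div}(t^{2s-1}\nabla V)=0$ in the interior, while $\pa_{\nu}^s U=f$ forces $\lim_{t\to0^+}V=-\kappa_s^{-1}f\in C^\beta$; thus $V$ solves a $t^{2s-1}$-weighted degenerate equation --- whose weight is again $A_2$ since $0<s<1$ --- with $C^\beta$ Dirichlet data. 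After establishing a local bound $\|V\|_{L^\infty(Q_{\theta' r})}\le C(\|U\|_{L^\infty(Q_r)}+\|f\|_{C^\beta(B_r)})$, which follows from standard interior estimates away from $\{t=0\}$ and a barrier comparison (or \cite[Lemma 4.5]{CS2}) up to $\{t=0\}$, I would subtract a $C^\beta$ extension of the boundary datum and invoke the boundary H\"older estimate for $\text{div}(t^{2s-1}\nabla\,\cdot\,)=0$ with homogeneous Dirichlet data to conclude $V\in C^{\alpha}(Q_{\theta r})$ with the asserted bound.

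The step I expect to be the main obstacle is the rigorous boundary regularity theory for the two $A_2$-degenerate operators $\text{div}(t^{1-2s}\nabla\,\cdot\,)$ and $\text{div}(t^{2s-1}\nabla\,\cdot\,)$ under Neumann and Dirichlet conditions respectively, together with the passage from the formal identity for $V=t^{1-2s}\pa_t U$ to a weak formulation valid up to $\{t=0\}$ and the accompanying weighted energy bounds; these are exactly the points carried out in detail in \cite{CS2,JLX}.
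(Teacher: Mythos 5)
Your proposal is correct and follows essentially the same route as the paper, which does not prove this lemma itself but simply refers to \cite[Proposition 2.6]{JLX} and \cite[Lemma 4.5]{CS2}; your subtraction of the Riesz-potential particular solution for part (1) and the conjugate-equation device $V=t^{1-2s}\pa_t U$ solving $\textnormal{div}(t^{2s-1}\nabla V)=0$ with Dirichlet data $-\kappa_s^{-1}f$ for part (2) are exactly the arguments carried out in those references.
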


Now we are ready to prove the main result of this section.
\begin{prop}\label{prop-high}
Let $1 < q \leq \frac{N+2s}{N-2s}$. Suppose that $U \in H^{1,2}_0(Q_{2r};t^{1-2s})$ is a positive solution of
\begin{equation}\label{eq-high-2}
\begin{cases}
\textnormal{div}(t^{1-2s} \nabla U) =0 &\text{in } Q_{2r},\\
\pa_{\nu}^s U = U^q &\text{on } B_{2r}.
\end{cases}
\end{equation}
Assume that $\int_{B_{2r}} U^{\frac{N}{2s}(q-1)^2} (x,0)\, dx \leq \delta$ for some small value $\delta = \delta (N,s) >0$.
Then $U(x,t)$ is twice differentiable in the $x$-variable in $Q_{r/2}$.
Moreover, the following estimates hold:
\begin{align*}
\| \nabla_x U \|_{C^{\alpha} (Q_{r/2})} &\leq C \(1+ \|U^{q-1}\|_{L^{\infty}(B_r)}\) \|U\|_{L^{\infty}(Q_r)},\\
\| t^{1-2s} \pa_t U\|_{C^{\alpha}(Q_{r/2})} &\leq C \(\|U\|_{L^{\infty}(Q_r)} + \|U^q\|_{C^1 (B_r)}\),\\
\|\nabla_x^2 U\|_{C^{\alpha} (Q_{r/2})} &\leq C
\(1+ \|U^{q-1}\|_{L^{\infty}(B_r)}\)
\(\|U\|_{L^{\infty}(Q_r)} + \| U^{q-2} |\nabla_x U|^2\|_{L^{\infty}(B_r)}\),\\
\| t^{1-2s} \pa_t \nabla_x U\|_{C^{\alpha}(Q_{r/2})} &\leq C\(\|\nabla_x U\|_{L^{\infty}(Q_r)} + \|U^{q-1} |\nabla_x U|\|_{C^1 (B_r)}\),\\
\| t^{2-2s} \pa_t^2 U\|_{C^{\alpha}(Q_{r/2})} &\leq  C \(\| t^{1-2s}\pa_t U\|_{C^{\alpha}(Q_{r/2})} + \|t^{2-2s} |\nabla_x^2 U|\|_{C^{\alpha}(Q_{r/2})}\)
\end{align*}
for some $\alpha \in (0,1)$.
\end{prop}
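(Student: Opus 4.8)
The plan is to run a bootstrap on the pair of Moser-type results, Lemma~\ref{lem-high-2} and Lemma~\ref{lem-high-1}, applied successively to $U$, to its first $x$-derivatives $\pa_j U$, and to its second $x$-derivatives $\pa_j\pa_k U$; the $t$-derivative estimates are then extracted from the (now H\"older) Neumann data via Lemma~\ref{lem-high-1}(2), and the bound on $\pa_t^2 U$ is read off algebraically from the equation itself. Throughout one works on the nested half-balls $Q_{r/2}\subset Q_{3r/4}\subset Q_r\subset Q_{2r}$, using the interior parameter $\theta$ of the two lemmas to pass from one scale to the next, and one uses positivity of $U$ so that the nonlinear terms $U^q,U^{q-1},U^{q-2}$ make sense for non-integer $q$.

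For the $x$-derivatives I would proceed in three steps. \textbf{Step 1:} write the Neumann condition as $\pa_\nu^s U=aU$ with $a=U^{q-1}$ and $f=0$; since $U$ lies in the energy space $H^{1,2}_0(Q_{2r};t^{1-2s})$ one has $U^{q-1}\in L^{N/2s}_{\mathrm{loc}}$ (because $\tfrac{N}{2s}(q-1)\le p+1$), so $B_r$ is covered by finitely many small balls on which $\|a\|_{L^{N/2s}}<\delta$, and Lemma~\ref{lem-high-2} on each gives $U\in L^\infty(Q_r)$ (this is where the smallness hypothesis $\int_{B_{2r}}U^{\frac{N}{2s}(q-1)^2}<\delta$ is used to guarantee the required coefficient smallness); then Lemma~\ref{lem-high-1}(1) with $f=U^q\in L^\infty$ gives $U\in C^\alpha(Q_{3r/4})$. \textbf{Step 2:} $V=\pa_j U$ formally solves $\mathrm{div}(t^{1-2s}\nabla V)=0$, $\pa_\nu^s V=qU^{q-1}V$; rigorously one runs the same estimate on the difference quotient $D_h^jU$, which satisfies $\pa_\nu^s(D_h^jU)=a_h\,D_h^jU$ with $\|a_h\|_{L^{N/2s}(B_\rho)}\le Cq\|U\|_{L^\infty}^{q-1}|B_\rho|^{2s/N}$ uniformly in $h$, so Lemma~\ref{lem-high-2} gives $\nabla_x U\in L^\infty$ and then Lemma~\ref{lem-high-1}(1) (with right-hand side $qU^{q-1}\pa_jU\in L^\infty$) gives $\nabla_x U\in C^\alpha$ and the first displayed bound. \textbf{Step 3:} repeat Step 2 with $V$ in place of $U$: $W=\pa_j\pa_k U$ satisfies $\pa_\nu^s W=qU^{q-1}W+q(q-1)U^{q-2}\pa_jU\,\pa_kU$, where now both the coefficient $qU^{q-1}$ and the inhomogeneity $q(q-1)U^{q-2}|\nabla_xU|^2$ lie in $L^\infty(B_r)$; Lemma~\ref{lem-high-2} followed by Lemma~\ref{lem-high-1}(1) yields $\nabla_x^2U\in C^\alpha(Q_{r/2})$, the factor $(1+\|U^{q-1}\|_{L^\infty})$ arising from moving the $qU^{q-1}W$ term to the right-hand side of Lemma~\ref{lem-high-1}(1). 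The difference-quotient formulation is what upgrades this to genuine twice-differentiability in $x$ on $Q_{r/2}$.

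The $t$-derivative estimates are then short. The bound on $t^{1-2s}\pa_t U$ is Lemma~\ref{lem-high-1}(2) applied to $U$ with Neumann data $U^q\in C^\beta$ (from Step~1), and the bound on $t^{1-2s}\pa_t\nabla_x U$ is Lemma~\ref{lem-high-1}(2) applied to $V=\pa_j U$ with data $qU^{q-1}\pa_j U\in C^\beta$ (from Steps~1--2). For $\pa_t^2 U$ one expands the equation,
\[0=\mathrm{div}(t^{1-2s}\nabla U)=t^{1-2s}\Delta_x U+(1-2s)\,t^{-2s}\pa_t U+t^{1-2s}\pa_t^2 U,\]
and multiplies by $t$ to get $t^{2-2s}\pa_t^2 U=-\,t^{2-2s}\Delta_x U-(1-2s)\,t^{1-2s}\pa_t U$ on $Q_{r/2}\cap\{t>0\}$; since $t\mapsto t^{2-2s}$ is $C^\alpha$ on $\overline{Q_{r/2}}$ for $\alpha$ small, taking $C^\alpha$-norms of the right-hand side (already controlled by Step~3 and the previous two estimates) produces the last displayed bound.

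The main obstacle is the degeneracy of the weight $t^{1-2s}$: classical Schauder theory is unavailable, so one must rely on the tailored Lemmas~\ref{lem-high-2} and~\ref{lem-high-1}, and in particular the second $t$-derivative can only be controlled with the heavier weight $t^{2-2s}$, obtained algebraically from the PDE rather than from a regularity estimate. The remaining delicate point is the bookkeeping: verifying that after each differentiation the zeroth-order coefficient still has small $L^{N/2s}$-norm (handled by the covering-by-small-balls device once $U\in L^\infty$), that the nested radii $r/2<3r/4<r<2r$ leave enough room at every application of the two lemmas, and that the difference quotients pass to the limit uniformly so as to yield honest $C^\alpha$ bounds on $\nabla_x U$ and $\nabla_x^2 U$.
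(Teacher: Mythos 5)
Your proposal is correct and follows essentially the same route as the paper: the chain Lemma~\ref{lem-high-2} $\to$ Lemma~\ref{lem-high-1}(1) applied successively to $U$, $\pa_j U$ and $\pa_i\pa_j U$, Lemma~\ref{lem-high-1}(2) for the weighted $t$-derivatives, and the algebraic identity $t^{2-2s}\pa_t^2U=-(1-2s)t^{1-2s}\pa_tU-t^{2-2s}\Delta_xU$ for the last bound. The only divergence is that you justify twice-differentiability and the differentiated boundary equations via a uniform difference-quotient argument, whereas the paper simply cites Propositions 2.13 and 2.19 of \cite{JLX} for that step; your version is more self-contained but proves the same facts.
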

\begin{proof}
By Propositions 2.13 and 2.19 of \cite{JLX}, any positive solution $U$ to \eqref{eq-high-2} is twice differentiable in $x$ and it holds that
\begin{equation}\label{eq-high-0}
\begin{cases}
\text{div}(t^{1-2s} \nabla \pa_i U) =0 &\text{in } Q_r,\\
\pa_{\nu}^s (\pa_i U) = qU^{q-1} \pa_i U &\text{on } B_r
\end{cases}
\end{equation}
and
\begin{equation}\label{eq-high-3}
\begin{cases}
\text{div}(t^{1-2s} \nabla \pa_i \pa_j U) =0 &\text{in } Q_r,\\
\pa_{\nu}^s (\pa_i \pa_j U) = q U^{q-1} \pa_i \pa_j U + q(q-1) U^{q-2} (\pa_i U)(\pa_j U) &\text{on } B_r
\end{cases}
\end{equation}
for any $1 \le i,\, j \le N$.

\medskip
Let us prove validity of the estimates.
Applying Lemma \ref{lem-high-2} to equations \eqref{eq-high-2} and \eqref{eq-high-0}, we get
\begin{equation}\label{eq-high-1}
\int_{Q_{4r/5}}t^{1-2s} |\nabla \pa_i U|^2 dz \leq C\int_{Q_{5r/6}}t^{1-2s} |\pa_i U|^2 dz \leq C \int_{Q_r}t^{1-2s} |U|^2 dz \leq C\|U\|_{L^{\infty}(Q_r)}^2.
\end{equation}
Using this chain of inequalities and Lemma \ref{lem-high-2} once more, we find
\[\|\pa_i U\|_{L^{\infty}(Q_{3r/4})}^2 \leq C \int_{Q_{4r/5}} t^{1-2s}|\pa_i U|^2 dz \leq C\|U\|_{L^{\infty}(Q_r)}^2.\]
Hence Lemma \ref{lem-high-1} (1) gives  the first inequality of Proposition \ref{prop-high}
\[\|\pa_i U\|_{C^{\alpha}(Q_{r/2})} \leq C\(\|\pa_i U\|_{L^{\infty}(Q_{3r/4})} +  \|U^{q-1}\pa_i U\|_{L^{\infty}(B_{3r/4})}\)
\leq C \(1+ \|U^{q-1}\|_{L^{\infty}(B_r)}\)\| U\|_{L^{\infty}(Q_r)}.\]
Next, by employing Lemma \ref{lem-high-1} (2), we obtain the second inequality, i.e.,
\[\|t^{1-2s}\pa_t U\|_{C^{\alpha}(Q_{r/2})} \le C\(\|U\|_{L^{\infty}(Q_r)} + \|U^q\|_{C^{\beta}(B_r)}\).\]
Besides, an application of Lemma \ref{lem-high-2} to \eqref{eq-high-3} as well as inequality \eqref{eq-high-1} imply that
\begin{align*}
\|\pa_i \pa_j U\|_{L^{\infty}(Q_{3r/4})} &\leq C \(\int_{Q_{4r/5}} t^{1-2s} |\pa_i \pa_j U|^2 dz\)^{1/2} + C \|U^{q-2} (\pa_i U)(\pa_j U)\|_{L^{\infty} (B_{4r/5})} \\
&\leq C \(\| U\|_{L^{\infty}(Q_r)} + \|U^{q-2} (\pa_i U)(\pa_j U)\|_{L^{\infty} (B_r)}\).
\end{align*}
Therefore Lemma \ref{lem-high-1} (1) shows
\begin{align*}
\|\pa_i \pa_j U\|_{C^{\alpha}(Q_{r/2})} & \leq C \(\|\pa_i \pa_j U\|_{L^{\infty}(Q_{3r/4})} + \|U^{q-1}\pa_i \pa_j U\|_{L^{\infty}(B_{3r/4})} + \|U^{q-2} (\pa_i U) (\pa_j U)\|_{L^{\infty}(B_{3r/4})}\) \\
&\leq C \(\|U\|_{L^{\infty}(Q_r)} + \|U^{q-2} (\pa_i U)(\pa_j U)\|_{L^{\infty} (B_r)}\)
\\
&\ + C\|U^{q-1}\|_{L^{\infty}(B_r)} \(\|U\|_{L^{\infty}(Q_r)} + \|U^{q-2} (\pa_i U)(\pa_j U)\|_{L^{\infty} (B_r)}\),
\end{align*}
which is the third inequality of Proposition \ref{prop-high}.
On the other hand, by employing Lemma \ref{lem-high-1} (2) to \eqref{eq-high-0} again, we deduce the fourth inequality
\[\|t^{1-2s} \pa_t \pa_i U\|_{C^{\alpha}(Q_{r/2})} \leq C \(\|\pa_i U\|_{L^{\infty}(Q_r)} + \|U^{q-1} \pa_i U \|_{L^{\infty}(B_r)}\).\]
Finally, the last inequality follows from the fact that
\[t^{2-2s}\pa_t^2 U = -(1-2s)t^{1-2s} \pa_t U - t^{2-2s} \Delta_x U \quad \text{in } Q_{2r}.\]
This completes the proof.
\end{proof}

As a corollary of the above result, we get
\begin{cor}\label{cor-high}
Let $\{U_n\}_{n \in \mn}$ is a sequence of solutions of \eqref{eq-ext} with $\ep = \ep_n$.
For any $r>0$, let $A_r' = \mc' \setminus \cup_{i=1}^m \overline{B_{+}^{N+1} ((x_0^i, 0),r)}$.
Then there exists $\alpha \in (0,1)$ and a constant $C>0$ independent of $n \in \mn$ such that
\[\sum_{k=1}^2 \left\|\nabla_x^k \((\lambda_{n}^1)^{-\frac{N-2s}{2}} U_n\) \right\|_{C^{\alpha}(A_r')}
+ \sum_{\substack{0 \le k \le 1, 1 \le l \le 2, \\ 1 \le k+l \le 2}} \left\|t^{l-2s} \pa_t^l \nabla_x^k \((\lambda_{n}^1)^{-\frac{N-2s}{2}} U_n\) \right\|_{C^{\alpha}(A_r')} \le C\]
for any $n \in \mn$ large enough.
\end{cor}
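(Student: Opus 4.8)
The plan is to obtain Corollary \ref{cor-high} by feeding the local elliptic estimates of Proposition \ref{prop-high} the pointwise information collected in Section \ref{sec-point}. Since $u_n=U_n(\cdot,0)$ solves \eqref{eq-main} with exponent $p-\ep_n$, the extension $U_n$ solves equation \eqref{eq-high-2} with $q=q_n:=p-\ep_n$ on every half-ball $Q_{2\rho}=B^{N+1}_+((x,0),2\rho)$ whose base lies in $A_{r/2}$ and stays at positive distance from $\pa\Omega$; for $n$ large one has $1<q_n\le p$, as required. The hypothesis of Proposition \ref{prop-high}, namely $\int_{B_{2\rho}}U_n^{\frac{N}{2s}(q_n-1)^2}(x,0)\,dx\le\delta$, is immediate from Lemma \ref{lem-u-zero}, which gives $\|u_n\|_{L^\infty(A_{r/2})}=O\big((\lambda_n^m)^{\frac{N-2s}{2}}\big)\to0$, so that in fact the integral tends to $0$. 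Covering a fixed relatively compact subregion of $A_r'$ lying at positive distance from $\pa_L\mc$ by finitely many such half-balls (those actually used in Section \ref{sec-bi} are of this type, being small half-spheres about the interior points $x_0^i$), it then suffices to bound the right-hand sides of the five estimates in Proposition \ref{prop-high}, uniformly in $n$, by $C(\lambda_n^1)^{\frac{N-2s}{2}}$.

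For this I would use three ingredients. First, $\|U_n\|_{L^\infty(Q_\rho)}\le C(\lambda_n^1)^{\frac{N-2s}{2}}$: this follows from Green's representation $U_n(z)=\int_\Omega G_\mc(z,y)u_n^{p-\ep_n}(y)\,dy$ together with Lemmas \ref{lem-u-zero} and \ref{lem-com}, exactly as in the proof of Lemma \ref{lem-u-asym}. Second, $\|\nabla_x U_n\|_{L^\infty(Q_{3\rho/4})}+\|\nabla_x^2 U_n\|_{L^\infty(Q_{3\rho/4})}\le C(\lambda_n^1)^{\frac{N-2s}{2}}$: these come from the Moser-iteration estimate of Lemma \ref{lem-high-2} applied to the equations \eqref{eq-high-0} and \eqref{eq-high-3} satisfied by $\pa_i U_n$ and $\pa_i\pa_j U_n$, just as inside the proof of Proposition \ref{prop-high}, with the first ingredient used on the right (the needed smallness of the potential $q_n U_n^{q_n-1}$ in $L^{N/2s}$ is again a consequence of $\|u_n\|_{L^\infty(A_{r/2})}\to0$). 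Third, $\|U_n^{q_n-1}\|_{L^\infty(B_\rho)}=O\big((\lambda_n^1)^{\frac{(N-2s)(q_n-1)}{2}}\big)=o(1)$, where the mild dependence of the exponent $q_n$ on $n$ is absorbed by Lemma \ref{lem-apriori} (which yields $(\lambda_n^1)^{(N-2s)\ep_n}\to1$). Plugging these into the inequalities of Proposition \ref{prop-high}, each term carries either a full factor $\|U_n\|_{L^\infty}$ or $\|\nabla_x U_n\|_{L^\infty}$, while the remaining factors appearing in $\|U_n^{q_n}\|_{C^1(B_\rho)}$ and $\|U_n^{q_n-1}|\nabla_x U_n|\|_{C^1(B_\rho)}$ contribute an extra $O\big((\lambda_n^1)^{\frac{(N-2s)(q_n-1)}{2}}\big)=o(1)$; hence every right-hand side is $\le C(\lambda_n^1)^{\frac{N-2s}{2}}$, and multiplying through by $(\lambda_n^1)^{-\frac{N-2s}{2}}$ gives the asserted uniform $C^\alpha$ bounds for $\nabla_x^k\big((\lambda_n^1)^{-\frac{N-2s}{2}}U_n\big)$ and $t^{l-2s}\pa_t^l\nabla_x^k\big((\lambda_n^1)^{-\frac{N-2s}{2}}U_n\big)$.

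The step I expect to be the genuine obstacle is the negative power $U_n^{q_n-2}$ occurring in the third and fourth estimates of Proposition \ref{prop-high} (through $U_n^{q_n-2}|\nabla_x U_n|^2$ and through the derivatives of $U_n^{q_n-1}|\nabla_x U_n|$) when $p<2$, i.e. $N>6s$: there $U_n\to0$ on $A_r'$ and an unsigned bound on $U_n^{q_n-2}$ is useless. To handle this I would invoke \eqref{eq-u-ep-asym} of Lemma \ref{lem-u-asym}: on any compact $K\subset A_r'\setminus(\Omega\times\{0\})$ at positive distance from $\pa_L\mc$ the functions $(\lambda_n^1)^{-\frac{N-2s}{2}}U_n$ converge uniformly to $c_1\sum_i b_i G_\mc(\cdot,x_0^i)$, which is bounded below by a positive constant on $K$; hence $U_n\ge c_K(\lambda_n^1)^{\frac{N-2s}{2}}$ on $K$ for $n$ large, and with $|\nabla_x U_n|^2\le C(\lambda_n^1)^{N-2s}$ this yields $\|U_n^{q_n-2}|\nabla_x U_n|^2\|_{L^\infty(K)}\le C(\lambda_n^1)^{\frac{(N-2s)q_n}{2}}=o\big((\lambda_n^1)^{\frac{N-2s}{2}}\big)$, which is more than enough; the analogous terms in the fourth estimate are controlled the same way, using in addition the bound on $\nabla_x^2 U_n$ already obtained. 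This settles the estimate on the region required in Section \ref{sec-bi}. The full statement over all of $A_r'$ then follows by standard boundary Schauder theory near $\pa_L\mc$ — where $U_n$ is uniformly bounded by \cite[Lemma 4.1]{Ch}, recall \eqref{eq-x_i}, and in fact $(\lambda_n^1)^{-\frac{N-2s}{2}}U_n$ stays bounded there by the Green representation — and by the decay of finite-energy solutions of \eqref{eq-ext} as $t\to\infty$; I omit these routine adaptations.
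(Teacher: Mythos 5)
Your proposal is correct and follows essentially the same route as the paper: apply Proposition \ref{prop-high} on interior half-balls, using the upper bound $\|U_n\|_{L^\infty}\le C(\lambda_n^1)^{\frac{N-2s}{2}}$ and, crucially, the matching lower bound from \eqref{eq-u-ep-asym} and the positivity of $G_\mc$ to control the negative power $U_n^{p-2-\ep_n}$ when $p<2$. The paper's own proof is terser (it works on a compact subset and does not spell out the covering or the behavior near $\pa_L\mc$ and as $t\to\infty$), but the key ideas coincide with yours.
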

\begin{proof}
Fix any compact subset $K \subset A_r'$ such that $K \cap \Omega \ne \emptyset$.
By \eqref{eq-u-ep-asym}, we have $\|U_n\|_{L^{\infty}(K)} \le C (\lambda_n^1)^{N-2s \over 2}$ (cf. Lemma \ref{lem-u-zero}).
Since Green's function $G_{\mc}$ is positive in $\mc$, again \eqref{eq-u-ep-asym} tells us that
the value $\inf_{z \in K}(\lambda_n^1)^{-{N-2s \over 2}} U_n(z)$ is bounded away from zero for large $n \in \mn$.
Thus even in the case that $p-2-\ep_n = {6-N \over N-2}-\ep_n < 0$ (i.e. $N \ge 6$),
we know
\[\left\| U^{p-2-\ep_n} |\nabla_x U|^2\right\|_{L^{\infty}(B_r)} \le C (\lambda_n^1)^{({N-2s \over 2})(p-\ep_n)}.\]
As a consequence,
\[\sum_{k=1}^2 \left\|\nabla_x^k U_n \right\|_{C^{\alpha}(A_r')}
+ \sum_{\substack{0 \le k \le 1, 1 \le l \le 2, \\ 1 \le k+l \le 2}} \left\|t^{l-2s} \pa_t^l \nabla_x^k U_n \right\|_{C^{\alpha}(A_r')} \le C (\lambda_{n}^1)^{\frac{N-2s}{2}}.\]
The proof is finished.
\end{proof}

\begin{proof}[Derivation of \eqref{eq-u-ep-asym-2} and \eqref{eq-u-ep-asym-3}]
Consider the sequence $\{\nabla_x U_n\}_{n \in \mn}$.
By Corollary \ref{cor-high}, it converges to some function $F$ uniformly over a compact subset of $A_r'$.
Then \eqref{eq-u-ep-asym} and an elementary analysis fact imply that $F = c_1 \sum_{i=1}^m b_i\, \nabla_x G_{\mc}((x,t),x_0^i)$. % Rudin PMA Theorem 7.17
The other functions can be treated similarly.
This proves \eqref{eq-u-ep-asym-2} and \eqref{eq-u-ep-asym-3}.
\end{proof}

\end{document}